\begin{document}
\renewcommand{\mathscr}{\mathcal}
\theoremstyle{plain}
\newtheorem{C}{Convention}
\newtheorem{fact}{Fact}
\newtheorem*{SA}{Standing Assumption}
\newtheorem{theorem}{Theorem}
\newtheorem{condition}{Conditions}[section]
\newtheorem{lemma}{Lemma}
\newtheorem{proposition}{Proposition}
\newtheorem{corollary}{Corollary}
\newtheorem{claim}[theorem]{Claim}
\newtheorem{definition}{Definition}
\newtheorem{Ass}[theorem]{Assumption}
\newcommand{\q}{Q}
\theoremstyle{definition}
\newtheorem{remark}{Remark}
\newtheorem{note}[theorem]{Note}
\newtheorem{example}{Example}
\newtheorem{assumption}[theorem]{Assumption}
\newtheorem*{notation}{Notation}
\newtheorem*{assuL}{Assumption ($\mathbb{L}$)}
\newtheorem*{assuAC}{Assumption ($\mathbb{AC}$)}
\newtheorem*{assuEM}{Assumption ($\mathbb{EM}$)}
\newtheorem*{assuES}{Assumption ($\mathbb{ES}$)}
\newtheorem*{assuM}{Assumption ($\mathbb{M}$)}
\newtheorem*{assuMM}{Assumption ($\mathbb{M}'$)}
\newtheorem*{assuL1}{Assumption ($\mathbb{L}1$)}
\newtheorem*{assuL2}{Assumption ($\mathbb{L}2$)}
\newtheorem*{assuL3}{Assumption ($\mathbb{L}3$)}
\newtheorem{charact}[theorem]{Characterization}
\newcommand{\notiz}{\textup} 
\renewenvironment{proof}{{\parindent 0pt \it{ Proof:}}}{\mbox{}\hfill\mbox{$\Box\hspace{-0.5mm}$}\vskip 16pt}
\newenvironment{proofthm}[1]{{\parindent 0pt \it Proof of Theorem #1:}}{\mbox{}\hfill\mbox{$\Box\hspace{-0.5mm}$}\vskip 16pt}
\newenvironment{prooflemma}[1]{{\parindent 0pt \it Proof of Lemma #1:}}{\mbox{}\hfill\mbox{$\Box\hspace{-0.5mm}$}\vskip 16pt}
\newenvironment{proofcor}[1]{{\parindent 0pt \it Proof of Corollary #1:}}{\mbox{}\hfill\mbox{$\Box\hspace{-0.5mm}$}\vskip 16pt}
\newenvironment{proofprop}[1]{{\parindent 0pt \it Proof of Proposition #1:}}{\mbox{}\hfill\mbox{$\Box\hspace{-0.5mm}$}\vskip 16pt}
\newcommand{\s}{\u s}

\newcommand{\Law}{\ensuremath{\mathop{\mathrm{Law}}}}
\newcommand{\loc}{{\mathrm{loc}}}
\newcommand{\Log}{\ensuremath{\mathop{\mathscr{L}\mathrm{og}}}}
\newcommand{\Meixner}{\ensuremath{\mathop{\mathrm{Meixner}}}}
\newcommand{\of}{[\hspace{-0.06cm}[}
\newcommand{\gs}{]\hspace{-0.06cm}]}
\newcommand{\mlambda}{\lambda_\textup{max}}
\let\MID\mid
\renewcommand{\mid}{|}
\newcommand{\bn}{\|_\mathbb{B}}
\newcommand{\kn}{\|_\mathbb{K}}
\newcommand{\bs}{\rangle_\mathbb{B}}
\newcommand{\ks}{\rangle_\mathbb{K}}
\newcommand{\dn}{\|_{\mathbb{R}^n}}

\let\SETMINUS\setminus
\renewcommand{\setminus}{\backslash}

\def\stackrelboth#1#2#3{\mathrel{\mathop{#2}\limits^{#1}_{#3}}}

\renewcommand{\theequation}{\thesection.\arabic{equation}}
\numberwithin{equation}{section}

\newcommand\llambda{{\mathchoice
      {\lambda\mkern-4.5mu{\raisebox{.4ex}{\scriptsize$\backslash$}}}
      {\lambda\mkern-4.83mu{\raisebox{.4ex}{\scriptsize$\backslash$}}}
      {\lambda\mkern-4.5mu{\raisebox{.2ex}{\footnotesize$\scriptscriptstyle\backslash$}}}
      {\lambda\mkern-5.0mu{\raisebox{.2ex}{\tiny$\scriptscriptstyle\backslash$}}}}}

\newcommand{\prozess}[1][L]{{\ensuremath{#1=(#1_t)_{0\le t\le T}}}\xspace}
\newcommand{\prazess}[1][L]{{\ensuremath{#1=(#1_t)_{0\le t\le T^*}}}\xspace}

\newcommand{\tr}{\operatorname{tr}}
\newcommand{\lijepoa}{{\mathscr{A}}}
\newcommand{\lijepob}{{\mathscr{B}}}
\newcommand{\lijepoc}{{\mathscr{C}}}
\newcommand{\lijepod}{{\mathscr{D}}}
\newcommand{\lijepoe}{{\mathscr{E}}}
\newcommand{\lijepof}{{\mathscr{F}}}
\newcommand{\lijepog}{{\mathscr{G}}}
\newcommand{\lijepok}{{\mathscr{K}}}
\newcommand{\lijepoo}{{\mathscr{O}}}
\newcommand{\lijepop}{{\mathscr{P}}}
\newcommand{\lijepoh}{{\mathscr{H}}}
\newcommand{\lijepom}{{\mathscr{M}}}
\newcommand{\lijepou}{{\mathscr{U}}}
\newcommand{\lijepov}{{\mathscr{V}}}
\newcommand{\lijepoy}{{\mathscr{Y}}}
\newcommand{\cF}{{\mathscr{F}}}
\newcommand{\cG}{{\mathscr{G}}}
\newcommand{\cH}{{\mathscr{H}}}
\newcommand{\cM}{{\mathscr{M}}}
\newcommand{\cD}{{\mathscr{D}}}
\newcommand{\bD}{{\mathbb{D}}}
\newcommand{\bF}{{\mathbb{F}}}
\newcommand{\bG}{{\mathbb{G}}}
\newcommand{\bH}{{\mathbb{H}}}
\newcommand{\dd}{d} 
\newcommand{\ddd}{\operatorname{d}}
\newcommand{\er}{{\mathbb{R}}}
\newcommand{\ce}{{\mathbb{C}}}
\newcommand{\erd}{{\mathbb{R}^{d}}}
\newcommand{\en}{{\mathbb{N}}}
\newcommand{\de}{{\mathrm{d}}}
\newcommand{\im}{{\mathrm{i}}}
\newcommand{\indik}{{\mathbf{1}}}
\newcommand{\D}{{\mathbb{D}}}
\newcommand{\E}{E}
\newcommand{\N}{{\mathbb{N}}}
\newcommand{\Q}{{\mathbb{Q}}}
\renewcommand{\P}{{\mathbb{P}}}
\newcommand{\ud}{\operatorname{d}\!}
\newcommand{\ii}{\operatorname{i}\kern -0.8pt}
\newcommand{\cadlag}{c\`adl\`ag }
\newcommand{\p}{P}
\newcommand{\F}{\mathbf{F}}
\newcommand{\1}{\mathbf{1}}
\newcommand{\f}{\mathscr{F}^{\hspace{0.03cm}0}}
\newcommand{\lle}{\langle\hspace{-0.085cm}\langle}
\newcommand{\rre}{\rangle\hspace{-0.085cm}\rangle}
\newcommand{\llbr}{[\hspace{-0.085cm}[}
\newcommand{\rrbr}{]\hspace{-0.085cm}]}

\def\EM{\ensuremath{(\mathbb{EM})}\xspace}

\newcommand{\la}{\langle}
\newcommand{\ra}{\rangle}

\newcommand{\Norml}[1]{%
{|}\kern-.25ex{|}\kern-.25ex{|}#1{|}\kern-.25ex{|}\kern-.25ex{|}}

\title[Limit Theorems for Martingale Problems]{Limit Theorems for Cylindrical Martingale Problems \\associated with L\'evy Generators} 
\author[D. Criens]{David Criens}
\address{D. Criens - Technical University of Munich, Center for Mathematics, Germany}
\email{david.criens@tum.de}

\keywords{Cylindrical Martingale Problem, L\'evy Generator, Limit Theorem, Feller Process, Stochastic Partial Differential Equation, Jump-Diffusion Existence Theorem\vspace{1ex}}

\subjclass[2010]{60J25, 60F05, 60H15}

\thanks{D. Criens - Technical University of Munich, Center for Mathematics, Germany,  \texttt{david.criens@tum.de}.}


\date{\today}
\maketitle

\frenchspacing
\pagestyle{myheadings}

\begin{abstract}
We prove limit theorems for cylindrical martingale problems associated to L\'evy generators. Furthermore, we give sufficient and necessary conditions for the Feller property of well-posed problems with continuous coefficients. We discuss two applications. First, we derive continuity and linear growth conditions for the existence of weak solutions to infinite-dimensional stochastic differential equations driven by L\'evy noise. Second, we derive continuity, local boundedness and linear growth conditions for limit theorems and the Feller property of weak solutions to stochastic partial differential equations driven by Wiener noise.
\end{abstract}

\section{Introduction}
Cylindrical martingale problems (MPs) associated with L\'evy generators can be considered as the martingale formulation of (analytically and probabilistically) weak and mild solutions to (semilinear) stochastic (partial) differential equations (S(P)DEs) driven by L\'evy noise. 
As in the classical finite-dimensional case, the martingale formulation gives access to weak conditions for the strong Markov property and Girsanov-type theorems, see \cite{criens18,EJP2924}. 
Another application of MPs, which was impressively exploited by Stroock and Varadhan \cite{SV} and Jacod and Shiryaev \cite{JS} in the finite-dimensional case, are limit theorems. 
In this article, we show the following: A sequence of solutions to MPs whose initial laws converge weakly and whose coefficients converge uniformly on compact sets to continuous coefficients converge weakly to a solution of the MP associated with the limiting initial law and the limiting coefficients. 
Moreover, we prove that under a uniqueness and existence assumption on the limiting MP a localized tightness condition implies weak convergence. This observation can be used to verify tightness under boundedness or moment conditions.

Let us mention two consequences of our observations.
Following Stroock and Varadhan \cite{SV}, we call a family of solutions to a well-posed MP a Feller family (or simply Feller) if it is weakly continuous w.r.t. their initial values. 
The limit theorem shows that a family of well-posed MPs with continuous coefficients is Feller if and only if a tightness condition holds. This observation generalizes results known for finite- and infinite-dimensional cases, see \cite{EJP2924,Stroock75,SV}. 
The limit theorem can also be used to construct solutions to MPs from solutions of approximate MPs and hence provides existence results in the spirit of Skorokhod's theorem for finite-dimensional SDEs. 

For illustration we discuss two applications: First, we derive an existence theorem for weak solutions to infinite-dimensional SDEs of the type
\begin{align}\label{int: SDE}
\dd Y_t = b(Y_{t-})\dd t + \sigma (Y_{t-})\dd W_t + \int v(x, Y_{t-})(p - q)(\dd x, \dd t),
\end{align}
where \(W\) is a cylindrical Brownian motion and \(p  - q\) is a compensated Poisson random measure. 
To explain our result in more detail, let \(J\) be a positive symmetric compact operator on a separable Hilbert space \(\mathbb{B}\). Under the assumption that \(b, \sigma\) and \(v\) satisfy a continuity and local boundedness assumption on \(\mathbb{B}\) and a linear growth condition on \(J(\mathbb{B})\), we show that the SDE \eqref{int: SDE} has a weak solution with values in \(\mathbb{B}\). 

For our second application we consider MPs corresponding to stochastic evolution equations of the form
\begin{align}\label{SPDE}
\dd Y_t = \left(A Y_t + b(Y_t)\right)\dd t + \sigma(Y_t)\dd W_t, 
\end{align} where \(A\) is the generator of a compact \(C_0\)-semigroup. We adapt the compactness method from \cite{doi:10.1080/17442509408833868} to show that a localized version of tightness holds if the non-linearities satisfy local boundedness conditions. 
It follows from this observation that solution families to well-posed diffusion-type MPs with continuous and locally bounded non-linearities are Feller. To the best of our knowledge, this result is new. In addition, we derive limit theorems either under a well-posedness and a local boundedness condition or a uniqueness and a linear growth condition. Our results apply, for instance, to non-linear stochastic heat equations.

Next, we comment on related literature in infinite-dimensional frameworks. 
For settings allowing jumps we are only aware of limit theorems for semimartingales given in \cite{XIE1995277}. 
We also think that the literature contains no existence result for infinite-dimensional SDEs with jumps which is comparable to ours. For continuous noise an existence theorem similar to ours was given in \cite{GMR09}. We stress that the setting in \cite{GMR09} is not necessarily Markovian. Our result strengthens the Markovian version of the theorem from \cite{GMR09} by replacing one of the linear growth conditions with a local boundedness condition.
A limit theorem for diffusions under a tightness and local boundedness condition can be found in \cite{EJP2924}. Our result extends the theorem from \cite{EJP2924} by showing that the tightness is implied by a well-posedness assumption. 
Under well-posedness, continuity, linear growth and moment conditions on the initial laws, limit theorems for time-inhomogeneous SPDEs driven by Wiener noise were proven in \cite{doi:10.1080/07362999708809484}. For the time-homogeneous case we extend these results by replacing the linear growth conditions with local boundedness conditions and confirm the conjecture from \cite{doi:10.1080/07362999708809484} that no moment assumption on the initial laws is needed. 
The Feller property of S(P)DEs with Wiener noise is under frequent investigation. We mention two related papers: \cite{doi:10.1080/17442508.2012.712973, Maslowski1999}.
In \cite{doi:10.1080/17442508.2012.712973} the martingale formulation is used to identify the transition semigroup of the studied Cauchy problem, while the core argument is based on a perturbation result for semigroups. The approach in \cite{Maslowski1999} is based on Girsanov's theorem. 

The article is structured as follows: In Section \ref{2.1} we introduce the MP, following the exposition given in \cite{criens18}. In Section \ref{sec: MR} we state our main results, in Section \ref{sec:sm} we discuss the existence of weak solutions to SDEs of the type \eqref{int: SDE} and in Section \ref{sec:3} we discuss the diffusion case. The proofs are collected in Section \ref{sec: Proofs}.

\section{Cylindrical Martingale Problems} \label{2.1}
Let \((\mathbb{B}, \|\cdot\|)\) be a real separable reflexive Banach space, which we equip with its norm topology, and let \(\mathbb{B}^*\) be its (topological) dual, which we equip with the operator norm \(\|\cdot\|_o\) and the corresponding topology. It is well-known that \((\mathbb{B}^*, \|\cdot\|_o)\) is also a real separable reflexive Banach space. 
For \(x^*\in \mathbb{B}^*\) and \(x \in \mathbb{B}\) we write
\[
x^*(x) \triangleq \langle x, x^*\rangle.
\]

We define \(\Omega\) to be the Skorokhod space of all \cadlag functions \(\alpha \colon \mathbb{R}_+\to \mathbb{B}\) and equip it with the Skorokhod topology, which turns it into a Polish space, see \cite{EK,JS}. 
The coordinate process \(X\) on \(\Omega\) is defined by \(X_t(\alpha) = \alpha(t)\) for all \(\alpha \in \Omega\) and \(t \geq 0\). Moreover, we set \[\mathscr{F} \triangleq \sigma(X_s, s \geq 0), \quad\mathscr{F}_t \triangleq \sigma(X_s, s \in [0, t]), \quad \F\triangleq (\mathscr{F}_t)_{t \geq 0}.\]
It is well-known that \(\mathscr{F}\) is the Borel \(\sigma\)-field on \(\Omega\).
Except otherwise stated, all terms such as stopping time, martingale, local martingale etc. refer to \(\F\) as the underlying filtration.

For a stopping time \(\xi\) we write
\[
\mathscr{F}_{\xi} \triangleq \big\{ A \in \mathscr{F} \colon A \cap \{\xi \leq t\} \in \mathscr{F}_t \textup{ for all } t \geq 0\big\}, 
\]
which is easily seen to be a \(\sigma\)-field. 

An operator \(Q \colon \mathbb{B}^* \to \mathbb{B}\) is called positive if \(\la Q x^*, x^*\ra \geq 0\) for all \(x^* \in \mathbb{B}^*\) and symmetric if \(\la Q x^*, y^*\ra = \la Q y^*, x^*\ra\) for all \(x^*, y^* \in \mathbb{B}^*\).         
We denote by \(S^+(\mathbb{B}^*, \mathbb{B})\) the set of all linear, bounded, positive and symmetric operators \(\mathbb{B}^* \to \mathbb{B}\).     
For a topological space \(E\) we denote the corresponding Borel \(\sigma\)-field by \(\mathscr{B}(E)\).

Next, we introduce the parameters for the martingale problem:
\begin{enumerate}
	\item[(i)]
	Let \(A \colon D(A) \subset \mathbb{B} \to \mathbb{B}\) be a linear, densely defined and closed operator.
	Here, \(D(A)\) denotes the domain of the operator \(A\).
	\item[(ii)]
	Let \(b \colon \mathbb{B} \to\mathbb{B}\) be Borel and such that for all bounded sequences \((y^*_n)_{n \in \mathbb{N}} \subset \mathbb{B}^*\) and all bounded sets \(G \subset \mathbb{B}\) it holds that 
	\begin{align*}
	\sup_{n \in \mathbb{N}} \sup_{x \in G} |\la b(x), y^*_n\ra| < \infty.
	\end{align*}
	\item[(iii)] 
	Let \(a\colon \mathbb{B}  \to S^+(\mathbb{B}^*, \mathbb{B})\) be bounded on bounded subsets of \(\mathbb{B}\) and Borel, i.e. \(x \mapsto a(x) y^*\) is Borel for all \(y^* \in \mathbb{B}^*\). Here, bounded refers to the operator norm.
	\item[(iv)]
	Let \(K\) be a Borel transition kernel from \(\mathbb{B}\) into \(\mathbb{B}\), such that for all bounded sequences \((y^*_n)_{n \in \mathbb{N}} \subset \mathbb{B}^*\), all bounded sets \(G \subset \mathbb{B}\) and all \(\epsilon > 0\) it holds that
	\begin{align*} 
	\sup_{n \in \mathbb{N}} &\sup_{x \in G} \int_{\mathbb{B}} \1_{\{\|y\| \leq \epsilon\}} |\la y, y^*_n\ra|^2K(x, \dd y) < \infty,\end{align*}\begin{align*}
	\sup_{x \in G} K(x, \{z \in \mathbb{B} \colon \|z\| \geq \epsilon\}) < \infty,
	\end{align*}
	and \(K(\cdot, \{0\}) = 0\).
	\item[(v)]
	Let \(\eta\) be a probability measure on \((\mathbb{B}, \mathscr{B}(\mathbb{B}))\). 
\end{enumerate}

Let \(A^*\) be the Banach adjoint of \(A\)
and let \(C^2_{c}(\mathbb{R}^d)\) be the set of twice continuously differentiable functions \(\mathbb{R}^d\to \mathbb{R}\) with compact support. 
The set of test functions for the MP is given by the following:
\[\mathcal{C} \triangleq \big\{g(\langle \cdot, y^*_1\rangle, \dots, \langle \cdot, y^*_n\rangle) \colon g \in C^2_c(\mathbb{R}^n), y^*_1, \dots, y^*_n \in D(A^*), n \in \mathbb{N}\big\}.\]
If \(g\) is twice continuously differentiable and \(f = g(\langle \cdot, y^*_1\rangle, \dots, \langle \cdot, y^*_n\rangle)\), we write \(\partial_{i} f\) for the partial derivative \[(\partial_{i} g)(\la \cdot, y^*_1\ra, \dots, \la \cdot, y^*_n\ra)\] and \(\partial^2_{ij} f\) for the partial derivative
\[
(\partial^2_{ij} g)(\la \cdot, y^*_1\ra, \dots, \la \cdot, y^*_n\ra).
\]

A bounded Borel function \(h\colon \mathbb{B} \to \mathbb{B}\) is called truncation function if there exists an \(\epsilon > 0\) such that \(h(x) = x\) for all \(x \in \mathbb{B}\) with \(\|x\| \leq \epsilon\). Throughout the article we fix a truncation function \(h\).

For \(f = g(\langle \cdot, y^*_1\rangle, \dots, \langle \cdot, y^*_n\rangle) \in \mathcal{C}\) we set
\begin{equation} \label{K}
\begin{split}
\mathcal{K}f (x) \triangleq  \sum_{i = 1}^n (& \langle x, A^* y^*_i\rangle + \langle b(x), y^*_i \rangle)\partial_i f(x) +  \frac{1}{2} \sum_{i=1}^n\sum_{j = 1}^n \langle a(x)y^*_i, y^*_j\rangle \partial^2_{ij}f(x) 
\\&+ \int_{\mathbb{B}} \Big( f(x + y) - f(x) - \sum_{i = 1}^n \langle h(y), y^*_i\rangle \partial_i f(x) \Big) K(x, \dd y).
\end{split}
\end{equation}
We are in the position to define the martingale problem.
\begin{definition} \label{def: MP}
	We call a probability measure \(P\) on \((\Omega, \mathscr{F})\) a \emph{solution to the martingale problem (MP) \((A, b,a,K, \eta)\)}, if the following hold:
	\begin{enumerate}
		\item[\textup{(i)}]
		\(P \circ X^{-1}_0 = \eta\). 
		\item[\textup{(ii)}]
		For all \(f \in \mathcal{C}\) the process
		\begin{equation}\label{f - K}
		\begin{split}
		M^f \triangleq f(X) - f(X_0) - \int_0^{\cdot} \mathcal{K}f (X_{s-})\dd s
		\end{split}
		\end{equation}
		is a local \(P\)-martingale. 
	\end{enumerate}
The set of solutions is denoted by \(\mathcal{M}(A, b, a, K, \eta)\).
	We say that the MP is \emph{well-posed}, if there exists a unique solution for all degenerated initial laws, i.e. for all \(\eta = \varepsilon_x\), \(x \in \mathbb{B}\), where \(\varepsilon_x\) is the Dirac measure on \(x \in \mathbb{B}\).
\end{definition}
For reader's convenience, we have collected some important results for MPs in Appendix \ref{app: 1}.
In particular, due to Proposition \ref{prop: A2} in Appendix \ref{app: 1}, the set
 \begin{align*} 
 \mathcal{D} \triangleq \left\{g(\la \cdot, y^*\ra)\colon g \in C^2_c(\mathbb{R}), y^* \in D(A^*)\right\} \subset \mathcal{C}
 \end{align*}
 determines the solutions of a MP and, due to Proposition \ref{prop: A1} in Appendix \ref{app: 1}, if \(P \in \mathcal{M}(A, b,a, K,  \eta)\), then  \(M^f\) is a local \(P\)-martingale for all 
 \begin{align}\label{eq: B}
 f \in \mathcal{B} \triangleq \big\{ g (\langle \cdot, y^*\rangle) \colon g \in C^2_b(\mathbb{R}), y^* \in D(A^*) \big\}, 
 \end{align}
 where \(C^2_b(\mathbb{R})\) denotes the set of bounded twice continuously differentiable functions with bounded gradient and bounded Hessian matrix.

\section{Limit Theorems for Cylindrical Martingale Problems}\label{sec: MR}
In this section we state our main results. We start with the limit theorem as described in the introduction.
Let us stress that we implicitly assume that all coefficients to MPs satisfy the assumptions introduced in the previous section.
We formulate three conditions:
\begin{enumerate}
		\item[\textup{(M1)}]
	The map
	\(
	x \mapsto \mathcal{K} f(x)
	\) 
	is continuous for all \(f \in \mathcal{D}\). Here, \(\mathcal{K}\) is defined as in \eqref{K}.
	\item[\textup{(M2)}] 
	Define \(\mathcal{B}^n\) as in \eqref{eq: B} with \(A^*\) replaced by \((A^n)^*\) and define \(\mathcal{K}^n\) as in \eqref{K} with \((A, b, a, K)\) replaced by \((A^n, b^n, a^n, K^n)\). For all \(f \in \mathcal{D}\) there exists a sequence \((f^n)_{n \in \mathbb{N}}\) with \(f^n \in \mathcal{B}^n\) such that for all \(m \in \mathbb{N}\) \begin{align}\label{eq: bdd assp} \sup_{n \in \mathbb{N}} \sup_{x \in \mathbb{B}} \big| f^n(x)\big| + \sup_{n \in \mathbb{N}} \sup_{\|x\| \leq m} \big| \mathcal{K}^n f^n (x)\big| < \infty,\end{align}
	and 
	\begin{align*} \big|f^n - f\big| + 
	\big|\mathcal{K}^n f^n - \mathcal{K} f\big| \to 0
	\end{align*}
	as \(n \to \infty\) uniformly on compact subsets of \(\mathbb{B}\). 
	\item[\textup{(M3)}] \(\eta^n \to \eta\) weakly as \(n \to \infty\). 
\end{enumerate}
We are in the position to state the first main result of this article.
\begin{theorem}\label{theo1}
	For all \(n \in \mathbb{N}\) let \(P^n\) be a solution to the MP \((A^n, b^n, a^n, K^n, \eta^n)\) and assume that \textup{(M1), (M2)} and \textup{(M3)} hold.
If \(P\) is a probability measure on \((\Omega, \mathscr{F})\) such that \(P^n \to P\) weakly as \(n \to \infty\), then \(P\) solves the MP \((A, b, a, K, \eta)\).
\end{theorem}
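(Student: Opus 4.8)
The plan is to verify the two conditions of Definition~\ref{def: MP} for \(P\). By Proposition~\ref{prop: A2} the class \(\mathcal{D}\) determines the solutions of a MP, so it suffices to check that \(P\circ X_0^{-1}=\eta\) and that \(M^f\) is a local \(P\)-martingale for every \(f\in\mathcal{D}\). The first point is immediate: the evaluation \(\Omega\ni\alpha\mapsto\alpha(0)\in\mathbb{B}\) is continuous for the Skorokhod topology (an admissible time change fixes the origin), so the continuous mapping theorem, \textup{(M3)} and uniqueness of weak limits yield \(P\circ X_0^{-1}=\lim_{n\to\infty}P^n\circ X_0^{-1}=\lim_{n\to\infty}\eta^n=\eta\).

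Now fix \(f\in\mathcal{D}\) and let \((f^n)_{n\in\mathbb{N}}\) be the sequence from \textup{(M2)}. For \(m\in\mathbb{N}\) set \(\rho_m\triangleq\inf\{t\ge 0\colon\|X_t\|\ge m\}\) and
\[
M^{n}\triangleq f^n(X)-f^n(X_0)-\int_0^{\cdot}\mathcal{K}^nf^n(X_{s-})\,\dd s .
\]
Since \(P^n\in\mathcal{M}(A^n,b^n,a^n,K^n,\eta^n)\) and \(f^n\in\mathcal{B}^n\), Proposition~\ref{prop: A1} shows that \(M^{n}\) is a local \(P^n\)-martingale. Because \(\|X_{s-}\|\le m\) for all \(s\le\rho_m\), the constants \(C_m\triangleq\sup_{n\in\mathbb{N}}\sup_{\|x\|\le m}|\mathcal{K}^nf^n(x)|\) and \(C\triangleq\sup_{n\in\mathbb{N}}\sup_{x\in\mathbb{B}}|f^n(x)|\) are finite by \eqref{eq: bdd assp}, and hence \(|M^{n}_{t\wedge\rho_m}|\le 2C+C_m t\) on \([0,t]\), uniformly in \(n\). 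A local martingale that is bounded on \([0,t]\) is a genuine martingale there, so \(M^{n}_{\cdot\wedge\rho_m}\) is a \(P^n\)-martingale; in particular, for all \(0\le t_1\le\dots\le t_k\le s\le t\) and \(\psi_1,\dots,\psi_k\in C_b(\mathbb{B})\),
\begin{equation}\label{plan:mg}
E^{P^n}\Big[\big(M^{n}_{t\wedge\rho_m}-M^{n}_{s\wedge\rho_m}\big)\prod_{i=1}^k\psi_i(X_{t_i})\Big]=0 .
\end{equation}

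It remains to pass to the limit \(n\to\infty\) in \eqref{plan:mg}. By Skorokhod's representation theorem I may realize the \(P^n\) and \(P\) on a common probability space as \(\Omega\)-valued processes \(\widetilde{X}^{n}\to\widetilde{X}\) almost surely in the Skorokhod topology, with \(\Law(\widetilde{X}^{n})=P^n\) and \(\Law(\widetilde{X})=P\). Along a.e.\ realization, at every continuity point \(u\) of \(\widetilde{X}\) one has \(\widetilde{X}^{n}_u\to\widetilde{X}_u\), and since a convergent sequence together with its limit is compact, \textup{(M1)} (continuity of \(\mathcal{K}f\)) and the locally uniform convergence in \textup{(M2)} give \(\mathcal{K}^nf^n(\widetilde{X}^{n}_{u-})\to\mathcal{K}f(\widetilde{X}_{u-})\) and \(f^n(\widetilde{X}^{n}_u)\to f(\widetilde{X}_u)\). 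If the truncation levels \(m\) are chosen outside an at most countable set of ``bad'' levels and the times \(t_1,\dots,t_k,s,t\) outside the (at most countable) set of fixed discontinuity times of \(P\), then the hitting times \(\rho_m\) and the evaluations at \(t_1,\dots,t_k,s,t\) are \(P\)-a.s.\ continuous functionals, so that \(\rho_m(\widetilde{X}^{n})\to\rho_m(\widetilde{X})\) and the boundary terms converge; together with the uniform bound \(C_m\) and dominated convergence this shows that the integrand in \eqref{plan:mg} converges almost surely, and, being uniformly bounded, that the left side of \eqref{plan:mg} converges to the corresponding expectation under \(P\). Hence
\[
E^{P}\Big[\big(M^f_{t\wedge\rho_m}-M^f_{s\wedge\rho_m}\big)\prod_{i=1}^k\psi_i(X_{t_i})\Big]=0 ,
\]
and a monotone class argument in the \(\psi_i\) and \(t_i\) upgrades this to: \(M^f_{\cdot\wedge\rho_m}\) is a \(P\)-martingale for every good \(m\). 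Since \cadlag paths are locally bounded, \(\rho_m\uparrow\infty\), and picking a sequence of good levels tending to infinity we conclude that \(M^f\) is a local \(P\)-martingale; together with \(P\circ X_0^{-1}=\eta\) and Proposition~\ref{prop: A2} this gives \(P\in\mathcal{M}(A,b,a,K,\eta)\).

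The main obstacle is the limit transition just described. The local-martingale property is not preserved under weak convergence, which forces the localization by \(\rho_m\) and the use of the uniform bounds in \textup{(M2)} to obtain genuine, uniformly bounded martingales; moreover, since \(\mathcal{K}^nf^n\to\mathcal{K}f\) only locally uniformly while these coefficients occur evaluated along randomly varying \cadlag paths, one cannot invoke the continuous mapping theorem directly and must work through the Skorokhod representation, dealing along the way with the discontinuity of the evaluation and hitting-time functionals on the Skorokhod space (handled in the usual manner by restricting to co-countable sets of levels and times, cf.\ \cite{JS,EK}).
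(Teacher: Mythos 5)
Your proposal is substantively correct but proceeds by a genuinely different route than the paper. The paper never invokes a Skorokhod representation; instead it works directly with weak convergence, identifying countably many ``bad'' truncation levels via the sets \(V(\alpha),V'(\alpha)\) so that \(\alpha\mapsto\tau_{\lambda_m}(\alpha)\) and \(\alpha\mapsto X_{\cdot\wedge\tau_{\lambda_m}(\alpha)}(\alpha)\) are continuous up to a \(P\)-null set, then applies the continuous mapping theorem to the bounded, a.s.\ continuous functional \(\alpha\mapsto k(\alpha)M^f_{t\wedge\tau_{\lambda_m}(\alpha)}(\alpha)\). To then replace \(M^f\) by \(M^{f^n}\) it uses \emph{tightness of \((P^n)_n\) explicitly}: it splits the expectation across a compact set \(K\subset\Omega\) with \(\sup_n P^n(K^c)\) small and controls the remainder by the uniform bound \eqref{eq: bdd assp} and the locally uniform convergence from \textup{(M2)} (via the fact that compact sets in \(\Omega\) have precompact ranges in \(\mathbb{B}\)). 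You sidestep that explicit tightness-plus-compact-set estimate by realizing \(P^n\to P\) as a.s.\ Skorokhod convergence of copies \(\widetilde X^n\to\widetilde X\), and letting dominated convergence absorb the same uniform bound, with a.s.\ convergence of the integrands checked through the ``convergent sequence together with its limit is compact'' observation combined with \textup{(M1)--(M2)}. Both approaches must delete a co-countable set of levels and evaluation times to cope with the discontinuity of hitting-time and evaluation functionals on the Skorokhod space; your proposal does this the same way. Either route lands on the same martingale increment identity, a monotone class argument, and Proposition~\ref{prop: A2}, and the initial-law step is identical.

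One genuine technical flaw: you truncate at \(\rho_m=\inf\{t\ge 0:\|X_t\|\ge m\}\), but on the Skorokhod space this is \emph{not} in general an \(\F\)-stopping time for the raw filtration \(\F\); the left-limit term in the paper's \(\tau_m=\inf\big(t\ge 0:\|X_{t-}\|\ge m\text{ or }\|X_t\|\ge m\big)\) is exactly what makes it one, cf.\ \cite[Proposition 2.1.5]{EK}. Without a stopping time one cannot assert that \(M^n_{\cdot\wedge\rho_m}\) is a \(P^n\)-martingale, nor invoke Proposition~\ref{prop: A1}, which is stated for \(\tau_m\). The fix is to replace \(\rho_m\) by \(\tau_m\) throughout; all of your subsequent estimates --- in particular \(\|X_{s-}\|\le m\) for \(s\le\tau_m\), the uniform bound on \(M^n_{\cdot\wedge\tau_m}\), the a.s.\ convergence of the boundary terms and of the time integrals, and \(\tau_m\uparrow\infty\) --- carry over unchanged. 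With that substitution your argument is complete.
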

The proof is given in Section \ref{proof: theo1} below. For diffusions a related result is given by \cite[Lemma 4.3]{EJP2924}.
As a first corollary we give an existence result under a tightness condition and a convergence result under a uniqueness assumption.
\begin{corollary}\label{coro: existence}
	For all \(n \in \mathbb{N}\) let \(P^n\) be a solution to the MP \((A^n, b^n, a^n, K^n, \eta^n)\) and assume that \textup{(M1), (M2)} and \textup{(M3)} hold. 
	\begin{enumerate}
		\item[\textup{(i)}] If \((P^n)_{n \in\mathbb{N}}\) is tight, then the MP \((A, b, a, K, \eta)\) has a solution.
				\item[\textup{(ii)}] If \((P^n)_{n \in \mathbb{N}}\)  is tight and the MP \((A, b, a, K, \eta)\) has at most one solution, then MP \((A, b, a, K, \eta)\) has a unique solution \(P\) and \(P^n \to P\) weakly as \(n \to \infty\).
	\end{enumerate}
\end{corollary}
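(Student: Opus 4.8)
The plan is to derive both assertions from Theorem~\ref{theo1} by combining it with Prokhorov's theorem and a standard subsequence argument; no new analytic input is needed.

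The preliminary observation is that the hypotheses \textup{(M1)}, \textup{(M2)} and \textup{(M3)} are stable under passing to subsequences. Indeed, \textup{(M1)} does not involve the index \(n\); \textup{(M3)} is trivially inherited; and for \textup{(M2)}, given \(f \in \mathcal{D}\) and an associated sequence \((f^n)_{n \in \mathbb{N}}\) as in the statement, its restriction to any subsequence \((n_k)_{k \in \mathbb{N}}\) still satisfies \(f^{n_k} \in \mathcal{B}^{n_k}\), the suprema appearing in~\eqref{eq: bdd assp} can only decrease, and the uniform-on-compacts convergence of \(f^n\) and \(\mathcal{K}^n f^n\) is inherited. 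Hence Theorem~\ref{theo1} applies verbatim to any subsequence of \((P^n)_{n \in \mathbb{N}}\).

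For part~\textup{(i)}, since \(\Omega\) is Polish, tightness of \((P^n)_{n \in \mathbb{N}}\) together with Prokhorov's theorem yields a subsequence \((P^{n_k})_{k \in \mathbb{N}}\) converging weakly to some probability measure \(P\) on \((\Omega, \mathscr{F})\). Applying Theorem~\ref{theo1} along this subsequence gives \(P \in \mathcal{M}(A, b, a, K, \eta)\), which proves existence. For part~\textup{(ii)}, existence of a solution \(P\) is part~\textup{(i)} and uniqueness is the assumed property that the MP has at most one solution, so \(\mathcal{M}(A, b, a, K, \eta) = \{P\}\). To see that the whole sequence converges, I would argue by contradiction: weak convergence of probability measures on the Polish space \(\Omega\) is metrizable, so if \(P^n \not\to P\) there exist \(\varepsilon > 0\) and a subsequence staying at distance at least \(\varepsilon\) from \(P\); by tightness and Prokhorov this subsequence has a further subsequence converging weakly to some \(\widetilde P\), and by Theorem~\ref{theo1} we get \(\widetilde P \in \mathcal{M}(A, b, a, K, \eta) = \{P\}\), i.e.\ \(\widetilde P = P\), contradicting the choice of \(\varepsilon\). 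Therefore \(P^n \to P\) weakly.

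The argument is essentially bookkeeping; the only point that requires a moment's care is the stability of \textup{(M2)} under subsequences, and that presents no real difficulty.
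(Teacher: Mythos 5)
Your proof is correct and follows essentially the same route as the paper's: Prokhorov plus Theorem~\ref{theo1} for part~(i), and the standard accumulation-point/subsequence argument for part~(ii), which the paper handles by invoking \cite[Theorem 2.6]{bil99} rather than writing out the contradiction. Your remark on the stability of (M2) under subsequences is a harmless extra precaution that the paper takes as understood.
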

\begin{proof}
	(i). Because \((P^n)_{n \in \mathbb{N}}\) is tight, we can extract a weakly convergent subsequence due to Prohorov's theorem. Due to Theorem \ref{theo1}, the limit point of this subsequence solves the MP \((A, b, a, K, \eta)\). 
	
	(ii). Due to Theorem \ref{theo1}, any accumulation point of \((P^n)_{n \in \mathbb{N}}\) is a solution to the MP \((A, b, a, K, \eta)\). Thus, because this MP is assumed to have at most one solution, we conclude that all accumulation points coincide with the unique solution \(P\) and \(P^n \to P\) weakly as \(n \to \infty\) follows from \cite[Theorem 2.6]{bil99}.
\end{proof}
As a second corollary, we obtain a characterization of the Feller property of MPs. It can be viewed as a generalization of \cite[Corollary 4.4]{EJP2924} to a setup including jumps. 
\begin{corollary}\label{coro: Feller}
	Assume that \textup{(M1)} holds and that for all \(x \in \mathbb{B}\) the MP \((A, b, a, K, \varepsilon_x)\) has a unique solution \(P_x\).
	Then, the following are equivalent: 
	\begin{enumerate}
		\item[\textup{(i)}]
		The family \(\{P_x, x \in \mathbb{B}\}\) is Feller, i.e. \(x \mapsto P_x\) is weakly continuous, which means that \(P_{x_n} \to P_x\) weakly as \(n \to \infty\) whenever \(x_n \to x\) as \(n \to \infty\).
		\item[\textup{(ii)}] For all sequences \((x_n)_{n \in \mathbb{N}} \subset \mathbb{B}\) with \(x_n \to x \in \mathbb{B}\) as \(n \to \infty\), the sequence \((P_{x_n})_{n \in \mathbb{N}}\) is tight.
	\end{enumerate}
\end{corollary}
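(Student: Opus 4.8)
The plan is to derive both implications from facts already established, the only substantial work being an application of Corollary~\ref{coro: existence}(ii) to a \emph{constant} sequence of coefficients in the direction (ii)~$\Rightarrow$~(i).

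For (i)~$\Rightarrow$~(ii) I would argue directly: given $x_n \to x$ in $\mathbb{B}$, the Feller property yields $P_{x_n} \to P_x$ weakly, so the set $\{P_{x_n} : n \in \mathbb{N}\} \cup \{P_x\}$ is a convergent sequence together with its limit, hence a compact and a fortiori relatively compact subset of the space of probability measures on the Polish space $\Omega$; Prohorov's theorem then shows it is tight, and in particular $(P_{x_n})_{n \in \mathbb{N}}$ is tight. That is all that is needed here.

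For (ii)~$\Rightarrow$~(i) I would fix an arbitrary sequence $x_n \to x$ in $\mathbb{B}$ and show $P_{x_n} \to P_x$ weakly, which is exactly weak continuity of $x \mapsto P_x$. The idea is to invoke Corollary~\ref{coro: existence}(ii) with the constant choice $(A^n, b^n, a^n, K^n) \equiv (A, b, a, K)$, with $\eta^n = \varepsilon_{x_n}$, $\eta = \varepsilon_x$, and with $P^n = P_{x_n}$, which by hypothesis is the (unique) solution of the MP $(A, b, a, K, \varepsilon_{x_n})$; tightness of $(P^n)_{n\in\mathbb{N}}$ is hypothesis (ii). Condition (M1) is assumed; (M3) holds because $x_n \to x$ forces $\varepsilon_{x_n} \to \varepsilon_x$ weakly; and for (M2) I would take $f^n = f$ for every $n$, so that $\mathcal{B}^n = \mathcal{B}$, $\mathcal{K}^n = \mathcal{K}$, and the required convergence $|f^n - f| + |\mathcal{K}^n f^n - \mathcal{K} f| \to 0$ is trivial. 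Since the MP $(A, b, a, K, \varepsilon_x)$ has at most one solution, namely $P_x$, Corollary~\ref{coro: existence}(ii) then gives $P_{x_n} \to P_x$ weakly, and since the sequence was arbitrary this is (i).

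The only genuinely computational point — which I expect to be the main obstacle — is verifying the uniform bound \eqref{eq: bdd assp} for the constant sequence, i.e.\ that for every $f = g(\langle \cdot, y^*\rangle) \in \mathcal{D}$ and every $m \in \mathbb{N}$ one has $\sup_{x \in \mathbb{B}} |f(x)| + \sup_{\|x\| \le m} |\mathcal{K} f(x)| < \infty$. Boundedness of $f$ is immediate from $g \in C^2_c(\mathbb{R})$. For $\mathcal{K} f$ on $\{\|x\| \le m\}$ I would bound the drift term using $|\langle x, A^* y^*\rangle| \le m \|A^* y^*\|_o$ together with the growth hypothesis on $b$ (applied to the constant bounded sequence $(y^*)$ and the bounded set $\{\|x\| \le m\}$) and the boundedness of $g'$; the diffusion term using that $a$ is bounded on bounded sets and $g''$ is bounded; and the jump term by splitting the $K(x, \cdot)$-integral at the truncation radius $\epsilon$. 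On $\{\|y\| \le \epsilon\}$ a second-order Taylor estimate bounds the integrand by $\tfrac12 \sup|g''| \cdot \langle y, y^*\rangle^2$, whose integral is uniformly bounded over $\{\|x\| \le m\}$ by the first display in assumption (iv); on $\{\|y\| > \epsilon\}$ the integrand is bounded by a constant depending only on $g$, $h$ and $y^*$ (via boundedness of $g$, $g'$ and $h$), and $\sup_{\|x\| \le m} K(x, \{\|z\| \ge \epsilon\}) < \infty$ by the second display in assumption (iv). Thus (M2) holds, and this estimate is the only place where the structural hypotheses on $b$, $a$ and $K$ are actually used.
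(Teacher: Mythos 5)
Your proposal is correct and follows essentially the same route as the paper: (i)~$\Rightarrow$~(ii) via Prohorov's theorem, and (ii)~$\Rightarrow$~(i) by applying Corollary~\ref{coro: existence}(ii) to the constant coefficients $(A^n,b^n,a^n,K^n)\equiv(A,b,a,K)$ with $f^n=f$ and $\eta^n=\varepsilon_{x_n}$. The one place where you expand on the paper is the verification of the uniform bound \eqref{eq: bdd assp} in (M2): you re-derive it directly from the structural hypotheses (ii)--(iv) on $b$, $a$, $K$, whereas the paper simply invokes Proposition~\ref{prop: A1}(i), which records exactly the estimate $\sup_{x\in G}|\mathcal{K}f(x)|<\infty$ for $f\in\mathcal{B}$ and bounded $G$; your explicit derivation is a correct unwinding of that cited result.
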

\begin{proof}
	The implication (i) \(\Rightarrow\) (ii) is due to Prohorov's theorem and the implication (ii) \(\Rightarrow\) (i) follows from Corollary \ref{coro: existence} and Proposition \ref{prop: A1} in Appendix \ref{app: 1}.
\end{proof}
\begin{remark}
	If \(x \mapsto P_x\) is Borel, the following are equivalent:
	\begin{enumerate}
		\item[(i)]
		The family \(\{P_x, x \in \mathbb{B}\}\) is Feller.
		\item[(ii)]
		The map \(\eta \mapsto \int P_x \eta(\dd x)\) is weakly continuous, i.e. \(\int P_x \eta^n(\dd x) \to \int P_x \eta(\dd x)\) weakly as \(n \to \infty\) whenever \(\eta^n \to \eta\) weakly as \(n \to \infty\).
	\end{enumerate}
	This fact is noteworthy, because in case for all \(x \in \mathbb{B}\) the MP \((A, b, a, K, \varepsilon_x)\) has a unique solution \(P_x\), the map \(x \mapsto P_x\) is Borel and \(\int P_x \eta(\dd x)\) is the unique solution to the MP \((A, b, a, K,\eta)\), see Proposition \ref{prop: A3} in Appendix \ref{app: 1}.
\end{remark}

Many conditions for tightness include boundedness or moment conditions. In such cases, it might be easier to consider a localized version of \((P^n)_{n \in \mathbb{N}}\). We introduce the stopping time \begin{align}\label{eq: tau}
\tau_z (\alpha) \triangleq \inf(t \geq 0\colon \|\alpha(t-)\| \geq z \textup{ or } \|\alpha(t)\| \geq z),\quad z \geq 0,\ \alpha \in \Omega,
\end{align}
see \cite[Proposition 2.1.5]{EK}.
As the following theorem shows, if a good candidate for the limit of \((P^n)_{n \in \mathbb{N}}\) exists, it suffices to show tightness for the localized sequences
\((P^n \circ X^{-1}_{\cdot \wedge \tau_m})_{n \in \mathbb{N}}\) and all \(m \in \mathbb{N}\). 
In Section \ref{sec:3} below we use this observation together with the compactness method from \cite{doi:10.1080/17442509408833868} to deduce mild conditions for the Feller property of diffusion-type MPs and limit theorems.
\begin{theorem}\label{prop: loc}
	For all \(n \in \mathbb{N}\) let \(P^n\) be a solution to the MP \((A^n, b^n, a^n, K^n, \eta^n)\) and assume that \textup{(M1), (M2)} and \textup{(M3)} hold.  If for all \(x \in \mathbb{B}\) the MP \((A, b, a, K, \varepsilon_x)\) has a unique solution \(P_x\) and for all \(m \in \mathbb{N}\) the sequence \((P^n \circ X_{\cdot \wedge \tau_m}^{-1})_{n \in \mathbb{N}}\) is tight, then the map \(x  \mapsto P_x\) is Borel, \(\int P_x \eta(\dd x)\) is the unique solution to the MP \((A, b, a, K, \eta)\) and \(P^n \to \int P_x \eta(\dd x)\) weakly as \(n \to \infty\).
\end{theorem}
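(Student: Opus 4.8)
The plan is to reduce everything to the convergence $P^n\to P$ and then prove it by a subsequence argument combined with a localized version of Theorem~\ref{theo1}. Since the MP $(A,b,a,K,\varepsilon_x)$ is well-posed for every $x\in\mathbb{B}$, Proposition~\ref{prop: A3} already gives that $x\mapsto P_x$ is Borel and that $P\triangleq\int P_x\,\dd\eta(x)$ is the unique solution of the MP $(A,b,a,K,\eta)$; hence the only thing left is $P^n\to P$ weakly, and by \cite[Theorem 2.6]{bil99} it suffices to show that every subsequence of $(P^n)_{n\in\mathbb{N}}$ admits a further subsequence converging weakly to $P$. I would fix such a subsequence and, using the tightness of $(P^n\circ X^{-1}_{\cdot\wedge\tau_m})_{n\in\mathbb{N}}$ for each $m\in\mathbb{N}$ together with Prohorov's theorem and a diagonal extraction, pass to a sub-subsequence — which I keep denoting $(P^n)_{n\in\mathbb{N}}$ — along which $P^n\circ X^{-1}_{\cdot\wedge\tau_m}\to Q_m$ weakly for every $m\in\mathbb{N}$ and some probability measures $Q_m$ on $\Omega$.

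The core of the proof is the identification $Q_m=P\circ X^{-1}_{\cdot\wedge\tau_m}$. For $f\in\mathcal{D}$ let $(f^n)_{n\in\mathbb{N}}$ be the functions from (M2) with $f^n\in\mathcal{B}^n$; by Proposition~\ref{prop: A1} applied to the $n$-th MP and optional stopping, $M^{f^n}_{\cdot\wedge\tau_m}$ is a local $P^n$-martingale, and it is in fact a bounded martingale because of the bounds in \eqref{eq: bdd assp}. The key point is that $M^{f^n}_{t\wedge\tau_m}=f^n(X_{t\wedge\tau_m})-f^n(X_0)-\int_0^t(\mathcal{K}^nf^n)(X_{s-})\1_{\{s<\tau_m\}}\,\dd s$ is a functional of the stopped path $X_{\cdot\wedge\tau_m}$ whose integrand is evaluated only on $\{\|x\|<m\}$, hence uniformly bounded by \eqref{eq: bdd assp}. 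I would then replace $f^n$ by $f$ and $\mathcal{K}^nf^n$ by $\mathcal{K}f$ at the cost of errors that are controlled on a large compact subset of $\mathbb{B}$ — available from the tightness of $(P^n\circ X^{-1}_{\cdot\wedge\tau_m})_{n\in\mathbb{N}}$ — by the uniform convergence in (M2), and on its complement by the bounds in \eqref{eq: bdd assp}, and pass to the weak limit, using the continuity of $\mathcal{K}f$ from (M1). This shows that under $Q_m$ the process $M^f$ stopped at $\tau_m$ is a martingale for all $f\in\mathcal{D}$, that $Q_m\circ X^{-1}_0=\eta$ by (M3), and that $Q_m$ is carried by $\tau_m$-stopped paths, i.e. that $Q_m$ solves the $\tau_m$-stopped MP associated with $(A,b,a,K,\eta)$. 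Pasting $Q_m$ at time $\tau_m$ with the family $\{P_x\colon x\in\mathbb{B}\}$, by the strong Markov / concatenation results for well-posed MPs (cf.~\cite{criens18} and Appendix~\ref{app: 1}), yields a solution of the full MP $(A,b,a,K,\eta)$, which equals $P$ by well-posedness and Proposition~\ref{prop: A3}; comparing the two on $\mathscr{F}_{\tau_m}$ and noting that $P\circ X^{-1}_{\cdot\wedge\tau_m}$ solves the same stopped MP by optional stopping gives $Q_m=P\circ X^{-1}_{\cdot\wedge\tau_m}$.

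It then remains to upgrade this to $P^n\to P$. First, $(P^n)_{n\in\mathbb{N}}$ (along the sub-subsequence) is tight: fixing $\varepsilon,T>0$, the càdlàg paths in $\Omega$ are bounded on $[0,T]$, so $\tau_M\uparrow\infty$ pointwise and $P(\tau_M\le T)\downarrow 0$; choosing $M$ large enough (and outside a countable set so that the relevant level sets are $Q_{M+1}$-null) and applying the portmanteau theorem to $P^n\circ X^{-1}_{\cdot\wedge\tau_{M+1}}\to P\circ X^{-1}_{\cdot\wedge\tau_{M+1}}$ yields $\limsup_n P^n(\tau_M\le T)\le P(\tau_M\le T)<\varepsilon$, which combined with the compact containment and oscillation control inherited from the tightness of $(P^n\circ X^{-1}_{\cdot\wedge\tau_M})_{n\in\mathbb{N}}$ gives the tightness criterion for $(P^n)_{n\in\mathbb{N}}$ on $\Omega$. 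Passing to a further weakly convergent sub-subsequence $P^n\to P'$ and using that for all but countably many $m$ the map $\alpha\mapsto\alpha_{\cdot\wedge\tau_m}$ is $P'$-a.s. continuous, one obtains $P'\circ X^{-1}_{\cdot\wedge\tau_m}=Q_m=P\circ X^{-1}_{\cdot\wedge\tau_m}$ for such $m$; letting $m\to\infty$ and using $X_{\cdot\wedge\tau_m}\to X$ pointwise on $\Omega$ gives $P'=P$. Since the tight sub-subsequence has $P$ as its only weak limit point it converges to $P$, which closes the subsequence argument.

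I expect the main obstacle to be the identification $Q_m=P\circ X^{-1}_{\cdot\wedge\tau_m}$. The delicate part of the localized limit theorem is that neither $\alpha\mapsto\alpha_{\cdot\wedge\tau_m}$ nor $\alpha\mapsto(\mathcal{K}^nf^n)(\alpha(s-))$ is continuous on $\Omega$, so one must work at continuity points of $\tau_m$ and of the paths, write the limiting functional so that it involves only the continuous coefficient $\mathcal{K}f$, and absorb the remainder using the compact containment from the localized tightness together with the uniform bounds in (M2); moreover one needs the pasting and local-uniqueness statements for stopped martingale problems, which go a little beyond the auxiliary results quoted in Appendix~\ref{app: 1}. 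The tightness lift and the a.e.-$m$ continuity of the stopping map in the penultimate step are comparatively routine, but the order matters: the identification of $Q_m$ must be in hand before tightness of $(P^n)_{n\in\mathbb{N}}$ can be asserted.
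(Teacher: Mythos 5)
Your proposal follows essentially the same architecture as the paper's proof — reduce to a subsequence argument, identify the limit of the stopped laws via a localized version of Theorem~\ref{theo1}, and then lift tightness to the unstopped sequence by a Portmanteau argument — but the central identification step is handled quite differently, and this is where the proposal is both heavier and looser than the paper.

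The paper never attempts the full identification $Q_m=P\circ X^{-1}_{\cdot\wedge\tau_m}$ that you aim for, and in particular never needs the pasting/concatenation machinery you invoke. It only needs the agreement $Q^m=P$ \emph{on the $\sigma$-field} $\mathscr{F}_{\tau_{\lambda_m}}$, which follows directly from Proposition~\ref{prop: A3}(ii): once $M^f_{\cdot\wedge\tau_{\lambda_m}}$ is shown to be a $Q^m$-martingale for $f\in\mathcal{D}$ and $Q^m\circ X_0^{-1}=\eta$, that proposition yields $Q^m=\int P_x\,\eta(\dd x)$ on $\mathscr{F}_{\tau_{\lambda_m}}$ without constructing any extension. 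This restricted identification is exactly what the Portmanteau step uses. Your route requires (a) a pasting lemma which, as you note, is not in Appendix~\ref{app: 1}, and (b) the assertion that $Q_m$ is carried by $\tau_m$-stopped paths. Neither is needed, and (b) is not obvious: the set $\{\alpha:\alpha=\alpha_{\cdot\wedge\tau_m(\alpha)}\}$ is not a priori closed in the Skorokhod topology, so it is unclear that a weak limit of $\tau_m$-stopped laws is again concentrated on $\tau_m$-stopped paths. A related gap is your claim that $M^f$ stopped at $\tau_m$ is a $Q_m$-martingale: the Theorem~\ref{theo1} mechanism only produces the martingale property at a level $\lambda_m\in[m-1,m]$ chosen so that the $Q_m$-exceptional set where $\tau_{\lambda_m}$ or $X_{\cdot\wedge\tau_{\lambda_m}}$ is discontinuous is null; you cannot take the level to be $m$ itself. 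The paper is careful about this (hence the $\lambda_m$) and it feeds into Proposition~\ref{prop: A3}(ii) via $\mathscr{F}_{\tau_{m-1}}\subset\mathscr{F}_{\tau_{\lambda_m}}$.

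Finally, your closing sub-subsequence step $P^n\to P'$ and the argument $P'=P$ via the $Q_m$'s is an unnecessary detour: once tightness of $(P^n)_{n\in\mathbb{N}}$ is in hand, Prohorov's theorem plus Theorem~\ref{theo1} plus uniqueness of $P$ already force every subsequential limit to equal $P$, which is how the paper closes. In summary, the overall strategy is right and your awareness of the continuity/level and pasting subtleties is accurate, but you should replace the pasting construction with a direct appeal to Proposition~\ref{prop: A3}(ii) on $\mathscr{F}_{\tau_{\lambda_m}}$ and work at a level $\lambda_m\in[m-1,m]$ rather than at $m$; otherwise you are proving a stronger-than-needed identity through unproven auxiliary results.
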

The proof is given in Section \ref{proof: theo2} below. In general, tightness of stochastic processes is well-studied. Sufficient and necessary conditions in various settings can be found in \cite{EK, 10.2307/1427238, metivier1987, XIE1995277}. A frequently used condition is the following version of Aldous's tightness criterion.
\begin{proposition}\label{prop: gen tight}
	Let \((P^n)_{n \in \mathbb{N}}\) be a sequence of probability measures on \((\Omega, \mathcal{F})\) such that the following hold: \begin{enumerate}
		\item[\textup{(i)}] For all \(t \geq 0\) the sequence \((P^n \circ X^{-1}_t)_{n \in \mathbb{N}}\) is tight.
		\item[\textup{(ii)}] For all \(\epsilon > 0, M \in \mathbb{N}\) and all sequences \((\rho_n, h_n)_{n \in \mathbb{N}}\), where \((\rho_n)_{n \in \mathbb{N}}\) is a sequence of stopping times such that \(\sup_{n \in \mathbb{N}} \rho_n \leq M\) and \((h_n)_{n \in \mathbb{N}} \in (0, \infty)\) is a sequence such that \(h_n \to 0\) as \(n \to \infty\), we have 
		\[
		\lim_{n \to \infty} P^n \Big(\|X_{\rho_n + h_n} - X_{\rho_n}\| \geq \epsilon \Big) = 0.
		\]
	\end{enumerate}
Then, \((P^n)_{n \in \mathbb{N}}\) is tight. 
\end{proposition}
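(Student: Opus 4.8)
The statement is the Aldous tightness criterion on the Skorokhod space \(\Omega\): hypotheses (i) and (ii) are precisely pointwise tightness of the time marginals \(P^n\circ X_t^{-1}\) and Aldous's stopping-time condition. The plan is to reduce the claim to the known form of this criterion for \cadlag paths with values in a Polish space, as recorded in \cite{EK, JS} and, in the vector-valued setting, in \cite{metivier1987, XIE1995277}; since the proof of that criterion uses nothing about the state space beyond that it is a separable complete metric space, it applies verbatim to a separable reflexive Banach space, and the only thing to add is the (elementary) reconciliation of the two equivalent forms of Aldous's condition. For completeness, here is how the argument is organised.

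By the standard characterisation of tightness on the Skorokhod space of \cadlag \(\mathbb{B}\)-valued paths (see \cite[Chapter 3]{EK}), it suffices to prove, for every \(T > 0\): (a) for each \(\eta > 0\) there is a compact \(\Gamma \subset \mathbb{B}\) with \(\liminf_{n} P^n(X_t \in \Gamma \text{ for all } t \in [0,T]) \geq 1 - \eta\), and (b) for each \(\epsilon > 0\), \(\lim_{\delta \downarrow 0} \limsup_{n} P^n(w'_T(X, \delta) \geq \epsilon) = 0\), where \(w'_T(\alpha, \delta)\) is the infimum, over partitions \(0 = t_0 < \dots < t_r = T\) with \(\min_i (t_i - t_{i-1}) > \delta\), of \(\max_i \sup_{s, t \in [t_{i-1}, t_i)} \|\alpha(s) - \alpha(t)\|\). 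Granting (b), condition (a) follows from (i): if \(w'_T(X, \delta) < \epsilon\), the optimal partition has fewer than \(T/\delta + 1\) blocks, each of length exceeding \(\delta\), hence each meeting a fixed finite grid \(G \subset [0,T]\) of mesh below \(\delta\); thus every \(X_t\), \(t \in [0,T)\), lies within \(\epsilon\) of \(\{X_g : g \in G\}\), and applying (i) at the finitely many points of \(G\) — together with completeness of \(\mathbb{B}\) and the tightness of any fixed finite family of the \(P^n\) — confines the path to a compact set with probability close to one.

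The substantive point is (b), which I would derive from (ii). First one notes that (ii), as phrased via sequences, is equivalent to the uniform statement that for every \(\epsilon > 0\) and \(M \in \mathbb{N}\),
\[
\lim_{\delta \downarrow 0}\ \limsup_{n}\ \sup\big\{ P^n(\|X_\sigma - X_\rho\| \geq \epsilon) : \rho, \sigma \text{ stopping times},\ \rho \leq \sigma \leq (\rho + \delta) \wedge M \big\} = 0,
\]
since a failure of this would yield, via a diagonal extraction, sequences \((\rho_n, h_n)\) violating (ii). One then runs the classical iterative construction underlying Aldous's lemma: for fixed \(\epsilon > 0\) set \(\tau^n_0 \triangleq 0\) and \(\tau^n_{k+1} \triangleq \inf\{ t > \tau^n_k : \|X_t - X_{\tau^n_k}\| > \epsilon \} \wedge T\), so that the oscillation of \(X\) on each block \([\tau^n_k, \tau^n_{k+1})\) is at most \(2\epsilon\); the uniform form of (ii) bounds, uniformly in \(k\) and for \(n\) large, the probability that two consecutive \(\tau^n\)'s lie within \(\delta\) of each other, while a further quantitative use of (ii) bounds, uniformly in \(n\), the number \(N\) of blocks that can occur before time \(T\). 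Choosing \(N\) and then \(\delta\) appropriately makes the partition \(\{\tau^n_k\}\) admissible in the definition of \(w'_T(\cdot, \delta)\) with probability close to one, whence \(w'_T(X, \delta) \leq 2\epsilon\) on that event, which is (b). I expect the real obstacle to be exactly this bookkeeping — keeping the estimate from (ii) uniform in the recursion index \(k\), and bounding, uniformly in \(n\), the number of \(\epsilon\)-oscillations a path can perform on \([0,T]\); this is the delicate part of the proofs in \cite{JS, metivier1987}, and in the present framework one only has to check that those arguments invoke nothing about \(\mathbb{B}\) beyond its being a separable complete metric space.
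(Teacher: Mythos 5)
The paper does not actually re-derive this proposition: its proof is simply a pointer to \cite[Theorem~6.8]{walshintroduction} and \cite[Corollary p.~120]{10.2307/3212499}. Your proposal instead sketches the underlying Aldous argument from scratch, and in broad structure it is the right argument (reduce tightness on \(\Omega\) to compact containment plus a modulus-of-continuity bound, obtain the compact containment from the modulus bound together with (i), and obtain the modulus bound from (ii) via the recursive stopping times \(\tau^n_k\)). The passage from the two-stopping-time oscillation bound to control on \(w'\) via those \(\tau^n_k\) is correct, as is your remark that the state space enters only through being a complete separable metric space.

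There is, however, one step that is asserted too casually and as stated does not go through. You claim that (ii) is equivalent, ``via a diagonal extraction,'' to the uniform two-stopping-time condition
\[
\lim_{\delta\downarrow 0}\ \limsup_n\ \sup\big\{P^n(\|X_\sigma - X_\rho\| \geq \epsilon)\colon \rho,\sigma \text{ stopping times},\ \rho \leq \sigma \leq (\rho+\delta)\wedge M\big\} = 0.
\]
Diagonal extraction from a failure of this condition yields sequences \((\rho_n,\sigma_n)\) of \emph{pairs of stopping times}, not sequences \((\rho_n,h_n)\) with \(h_n\) a deterministic scalar, and so does not directly contradict (ii). In other words, what is elementary is the equivalence of (ii) with the uniform statement \(\lim_{\delta\downarrow 0}\limsup_n\sup_{\rho\leq M,\,0<h\leq\delta} P^n(\|X_{\rho+h}-X_\rho\|\geq\epsilon)=0\) (deterministic shift); the further implication that a deterministic shift condition controls a genuinely random shift \(\sigma-\rho\) is nontrivial. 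It is the actual content of Aldous's lemma (e.g.\ Billingsley, \emph{Convergence of Probability Measures}, 3rd ed., Lemma~16.12, or the corresponding step in \cite[VI.4]{JS}), proved by averaging over an auxiliary independent uniform random time, and it is exactly what makes the recursive construction close. Once that lemma is invoked, the rest of your outline is correct and matches the proofs in the references the paper cites.
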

\begin{proof}
	See \cite[Theorem 6.8]{walshintroduction} and \cite[Corollary p. 120]{10.2307/3212499}.
\end{proof}
We illustrate an application of this proposition and Corollary \ref{coro: existence} in the next section.

\section{Existence of Weak Solutions to Jump-Diffusion SDEs}\label{sec:sm}
In this section we apply our results in a semimartingale setting. Namely, we give continuity and linear growth conditions for the existence of weak solutions to jump-diffusion SDEs of the type
\begin{align}\label{eq: main SDE}
\dd Y_t = b(Y_{t-}) \dd t + \sigma(Y_{t-}) \dd W_t + \int v(x, Y_{t-})(p - q)(\dd x, \dd t),
\end{align}
where \(W\) is a cylindrical standard Brownian motion and \(p - q\) is a compensated Poisson random measure (see \cite[Section II.1]{JS}).

We assume that \(\mathbb{B}\) is a separable Hilbert space. Moreover, let \((E, \mathcal{E})\) be a Blackwell space (see \cite[Section II.1.a]{JS}; any Polish space with its Borel \(\sigma\)-field is a Blackwell space), \(\mathbb{H}\) be a second separable Hilbert spaces and \(q = \dd t \otimes F\) be the compensator of a Poisson random measure on \(\mathbb{R}_+ \times E\) (see \cite[Theorem II.1.8]{JS}).  The norm of  \(\mathbb{B}\) is denoted by \(\|\cdot\bn\) and the corresponding scalar product is denoted by \(\langle \cdot, \cdot \bs\). 

Let \(J \colon \mathbb{B} \to \mathbb{B}\) be a compact operator of the form 
\[
J x = \sum_{i = 1}^\infty \lambda_i \langle x, h_i\bs h_i, \quad x \in  \mathbb{B}, \lambda_k > 0 \text{ for all } k \in \mathbb{N},
\]
where \(\sup_{k \in \mathbb{N}} \lambda_k < \infty\) and \((h_k)_{k \in \mathbb{N}}\) is an orthonormal basis of \(\mathbb{B}\). 
Symmetric positive compact operators are always of this form except that the coefficients \(\lambda_k\) are not necessarily strictly positive, see \cite[Theorem VI.3.2]{werner2007funktionalanalysis}. 
We also define \(J^{-1} \colon J(\mathbb{B}) \to \mathbb{B}\) by 
	\[
	J^{-1} x \triangleq \sum_{i = 1}^\infty \frac{1}{\lambda_i} \langle x, h_i\bs h_i, \quad x \in J(\mathbb{B}).
	\]
Equipped with the scalar product \(\langle \cdot, \cdot\ks \triangleq \langle J^{-1}\ \cdot\ , J^{-1}\ \cdot\ \bs\) the space \(\mathbb{K} \triangleq J(\mathbb{B})\) becomes a separable Hilbert space with orthonormal basis \((e_k)_{k \in \mathbb{N}} \triangleq (\lambda_k h_k)_{k \in \mathbb{N}}\), see \cite[Proposition C.0.3]{roeckner15}. We denote the corresponding norm by \(\|\cdot\kn\).
As always, we identify \(\mathbb{B},\mathbb{H}\) and \(\mathbb{K}\) with their (topological) duals.

\begin{lemma}\label{lem: emb comp}
The embedding \(\iota \colon \mathbb{K} \hookrightarrow \mathbb{B}\) is compact. 
\end{lemma}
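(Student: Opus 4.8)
The plan is to prove that $\iota$ is a compact operator by showing that it maps the closed unit ball of $\mathbb{K}$ into a relatively compact subset of $\mathbb{B}$; since every bounded subset of $\mathbb{K}$ is contained in a dilate of this ball, compactness of $\iota$ follows.

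First I would pin down the closed unit ball of $\mathbb{K}$ as a subset of $\mathbb{B}$. Since $\lambda_k > 0$ for all $k$ and $(h_k)_{k \in \mathbb{N}}$ is an orthonormal basis of $\mathbb{B}$, the operator $J$ is injective, so $J$ restricts to a bijection $\mathbb{B} \to \mathbb{K} = J(\mathbb{B})$ whose inverse is exactly the map $J^{-1}$ defined in the text (a short computation with the series representations shows $J^{-1}(Jy) = y$). By the very definition of the scalar product on $\mathbb{K}$ we then have $\| x \kn = \| J^{-1} x \bn$ for every $x \in \mathbb{K}$; writing $x = J y$ with $y \in \mathbb{B}$ this reads $\|x\kn = \|y\bn$. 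Hence, identifying $\mathbb{K}$ with its image under $\iota$, the closed unit ball of $\mathbb{K}$ is precisely $J(\overline{B}_{\mathbb{B}})$, where $\overline{B}_{\mathbb{B}}$ denotes the closed unit ball of $\mathbb{B}$.

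The proof then concludes in one line: $J$ is by hypothesis a compact operator on $\mathbb{B}$, so $J(\overline{B}_{\mathbb{B}})$ is relatively compact in $\mathbb{B}$; for any bounded $S \subset \mathbb{K}$ one has $S \subset r\,\overline{B}_{\mathbb{K}}$ for some $r > 0$, whence $\iota(S) \subset r\, J(\overline{B}_{\mathbb{B}})$ is relatively compact in $\mathbb{B}$. Thus $\iota$ sends bounded sets to relatively compact sets, i.e. $\iota$ is compact. (An alternative, equally short route is to approximate $\iota$ in operator norm by the finite-rank truncations $\iota_N x \triangleq \sum_{k=1}^{N} \langle x, e_k\ks\, e_k$; a direct estimate gives $\|\iota - \iota_N\|_{\mathcal{L}(\mathbb{K},\mathbb{B})} \le \sup_{k > N}\lambda_k$, and the right-hand side tends to $0$ because the eigenvalues of the compact self-adjoint operator $J$ can accumulate only at $0$, so $\iota$ is a norm limit of finite-rank operators.)

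There is no real obstacle here: the only points requiring a line of verification are the identity $\|x\kn = \|J^{-1}x\bn$ together with the fact that $J^{-1}$ is a genuine two-sided inverse of $J\colon \mathbb{B} \to \mathbb{K}$, and — for the alternative argument — the fact that $\lambda_k \to 0$. Both are immediate from the spectral form of $J$ and its assumed compactness, so the lemma is essentially a bookkeeping statement recording that the $\mathbb{K}$-geometry is the $J$-image of the $\mathbb{B}$-geometry.
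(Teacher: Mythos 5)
Your main argument is correct and is essentially the paper's own proof recast in terms of sets rather than sequences: both rest on the observation that $J\colon \mathbb{B}\to\mathbb{K}$ is a surjective isometry (so $\|Jy\kn=\|y\bn$) combined with the compactness of $J$ as a map into $\mathbb{B}$, the paper phrasing this as ``a bounded sequence $y_n=Jx_n$ in $\mathbb{K}$ has $(x_n)$ bounded in $\mathbb{B}$, hence $(Jx_n)$ subconverges.'' Your parenthetical finite-rank-truncation argument is a genuinely different (and equally valid) route, but the primary proof matches the paper's.
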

\begin{proof} Fix a bounded sequence \((y_n)_{n \in \mathbb{N}} \subset \mathbb{K}\). There exists a sequence \((x_n)_{n \in \mathbb{N}} \subset \mathbb{B}\) such that \(y_n = J x_n\) for \(n \in \mathbb{N}\). Because 
\[
\|y_n\kn= \| J^{-1} J x_n\bn = \|x_n\bn, 
\]
the sequence \((x_n)_{n \in \mathbb{N}}\) is bounded in \(\mathbb{B}\). Consequently, because \(J\) is compact, the sequence \((y_n)_{n \in \mathbb{N}} = (J x_n)_{n \in \mathbb{N}}\) has a convergent subsequence in \(\mathbb{B}\). This shows that \(\iota\) is compact.
\end{proof}

The space of all Hilbert-Schmidt operators \(\mathbb{H} \to \mathbb{K}\) is denote by \(L_2(\mathbb{H}, \mathbb{K})\) and the corresponding Hilbert-Schmidt norms is denoted by 
\(\|\cdot\|_{\textup{HS}(\mathbb{K})}\). As always, we denote the space of linear bounded operators \(\mathbb{H} \to \mathbb{B}\) by \(L(\mathbb{H}, \mathbb{B})\). In case the image and preimage spaces are not \(\mathbb{B}\) and \(\mathbb{H}\) the notation is analogously. The operator norm on \(L(\mathbb{H}, \mathbb{B})\) is denoted by \(\|\cdot\|_{L(\mathbb{B})}\).

\begin{theorem}\label{prop: existence}
Let \(b \colon \mathbb{B} \to \mathbb{B}, \sigma \colon \mathbb{B} \to L(\mathbb{H}, \mathbb{B})\) and \(v \colon E \times \mathbb{B} \to \mathbb{B}\) be measurable maps such that the following hold:
\begin{enumerate}
\item[\textup{(i)}]
\(b (\mathbb{K}) \subset \mathbb{K}, \sigma (\mathbb{K}) \subset L_2(\mathbb{H}, \mathbb{K})\) and \(v(E \times \mathbb{K}) \subset \mathbb{K}\).

\item[\textup{(ii)}]
For all \(y \in E\) the maps 
\[
\mathbb{B} \ni x \mapsto b(x), \sigma (x), v(y, x)
\]
are continuous. For \(\sigma\) we mean continuity in the operator norm \(\|\cdot\|_{L(\mathbb{B})}\).
\item[\textup{(iii)}]
There exists a Borel function \(\gamma \colon E \to \mathbb{R}_+\) such that \(\int_E \gamma^2(y)F(\dd y) < \infty\) and a constant \(L \in (0, \infty)\) such that for all \(y \in E\) and \(x \in \mathbb{K}\)
\begin{align}
\|b(x)\kn + \|\sigma(x)\|_{\textup{HS} (\mathbb{K})} &\leq L\big(1 + \|x\kn\big),\label{eq: lg}
\\
\|v(y, x)\kn &\leq \gamma(y)  \big(1 + \|x\kn\big).\nonumber
\end{align}
Moreover, for all bounded sets \(B \subset \mathbb{B}\) we have
\begin{align}\label{eq: lbdd}
\sup_{x \in B} \|b(x)\bn + \sup_{x \in B} \|\sigma (x)\|_{L(\mathbb{B})} < \infty, 
\end{align}
and there exists a Borel function \(\zeta_B\colon E \to \mathbb{R}_+\) such that \(\int_E \zeta^2_B(z)F(\dd z) < \infty\) and 
\begin{align}\label{eq: loc bdd B}
\sup_{x \in B} \|v(y, x)\bn \leq  \zeta_B (y)
\end{align}
for all \(y \in E\).
\end{enumerate}
For all probability measures \(\eta\) on \((\mathbb{K}, \mathcal{B}(\mathbb{K}))\) there exists a solution to the MP \((0, \mu, a, K, \eta \circ \iota^{-1})\), where for all \(x \in \mathbb{B}\)
\begin{align*}
\mu (x) &\triangleq b(x) + \int_E \big(h(v(y, x)) - v(y, x)\big) F(\dd y),\\
a (x) &\triangleq \sigma (x) \sigma(x)^*,\\
K(x, G) &\triangleq \int_E \1_{G\backslash \{0\}} (v(y, x)) F(\dd y),\quad G \in \mathcal{B}(\mathbb{B}),
\end{align*}
and \(\sigma (x)^* \in L(\mathbb{B}, \mathbb{H})\) denotes the adjoint of \(\sigma (x) \in L(\mathbb{H}, \mathbb{B})\). 
\end{theorem}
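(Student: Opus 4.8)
The plan is to construct the solution as a weak limit of solutions of finite‑dimensional approximations and to identify the limit through Corollary \ref{coro: existence}. First I would reduce to a Dirac initial law: since \(\iota(\mathbb{K})\) is a Borel subset of \(\mathbb{B}\) (the injective continuous image of a Polish space) and the argument below produces, for every \(x\in\mathbb{K}\), a solution of the MP \((0,\mu,a,K,\varepsilon_x)\), a measurable selection argument — the graph \(\{(x,P)\colon P\in\mathcal{M}(0,\mu,a,K,\varepsilon_x)\}\) is Borel with non‑empty sections, hence admits a universally measurable uniformization \(x\mapsto P_x\) — together with a disintegration (cf.\ Proposition \ref{prop: A3}; the mixing step does not use uniqueness) shows that \(\int P_x\,(\eta\circ\iota^{-1})(\dd x)\) solves the MP \((0,\mu,a,K,\eta\circ\iota^{-1})\). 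Hence it suffices to treat \(\eta=\varepsilon_x\) with \(x\in\mathbb{K}\).

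For the approximations, let \(\Pi_n z\triangleq\sum_{i=1}^n\langle z,h_i\bs h_i\), which is simultaneously the orthogonal projection of \(\mathbb{B}\) and of \(\mathbb{K}\) onto \(\mathbb{K}_n\triangleq\operatorname{span}(h_1,\dots,h_n)\); it is a contraction on each of \(\mathbb{B}\) and \(\mathbb{K}\) and converges strongly to the identity on each, hence uniformly on compact sets. Put \(b^n\triangleq\Pi_n b(\Pi_n\,\cdot\,)\), \(\sigma^n\triangleq\Pi_n\sigma(\Pi_n\,\cdot\,)\), \(v^n(y,\,\cdot\,)\triangleq\Pi_n v(y,\Pi_n\,\cdot\,)\) and let \(\mu^n,a^n,K^n\) be built from \(b^n,\sigma^n,v^n\) exactly as \(\mu,a,K\) are built from \(b,\sigma,v\). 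A routine check using \eqref{eq: lg}, \eqref{eq: lbdd}, \eqref{eq: loc bdd B} and the integrability of \(\gamma\) and of the \(\zeta_B\) shows that \((\mu^n,a^n,K^n)\) satisfies the standing assumptions of Section \ref{2.1}; the only point worth noting is that \(\mu^n\) is well defined, because \(h(v^n(y,z))-v^n(y,z)\) vanishes unless \(\|v^n(y,z)\bn>\epsilon_0\), the truncation radius of \(h\), and for \(z\) in a bounded set this confines \(y\) to a set of finite \(F\)‑measure. The equation obtained from \eqref{eq: main SDE} by replacing \((b,\sigma,v)\) with \((b^n,\sigma^n,v^n)\) and the initial law by \(\varepsilon_{\Pi_n x}\) is a genuinely finite‑dimensional SDE on \(\mathbb{K}_n\cong\er^n\) with continuous, linearly growing drift and diffusion coefficient and a square‑integrable compensated‑Poisson jump part with continuous coefficient; by the classical finite‑dimensional weak existence theorem (see, e.g., \cite{JS,Stroock75}) it has a weak solution \(Y^n\), and It\^o's formula applied to \(g(\langle Y^n,y^*\rangle)\) for \(g\in C^2_c(\er)\), \(y^*\in\mathbb{B}^*\) shows that \(P^n\triangleq\Law(Y^n)\), a measure on \(\Omega\) carried by paths valued in \(\mathbb{K}_n\), solves the MP \((0,\mu^n,a^n,K^n,\varepsilon_{\Pi_n x})\).

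Next I would verify \textup{(M1)}, \textup{(M2)} and \textup{(M3)}. Condition \textup{(M3)} is immediate: \(\Pi_n x\to x\) in \(\mathbb{B}\), so \(\varepsilon_{\Pi_n x}\to\varepsilon_x=\varepsilon_x\circ\iota^{-1}\) weakly. For \textup{(M1)}, continuity of \(x\mapsto\mathcal{K}f(x)\) follows from the continuity of \(b\), the operator‑norm continuity of \(\sigma\), the continuity of each \(v(y,\,\cdot\,)\), and dominated convergence, the integral terms being controlled on bounded sets \(B\) by \(\int_E\|v(y,z)\bn^2\,F(\dd y)\le\int_E\zeta_B(y)^2\,F(\dd y)<\infty\); here one may take \(h\) continuous, which is no loss since replacing the truncation function by another only perturbs the drift by the bounded term \(\int(h'-h)\,\dd K\) and leaves the solution set of the MP unchanged. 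For \textup{(M2)} I would take \(f^n=f\) for \(f=g(\langle\,\cdot\,,y^*\rangle)\in\mathcal{D}\), which belongs to \(\mathcal{B}^n\) because \((A^n)^*=0\). The uniform bound \eqref{eq: bdd assp} is exactly where the local boundedness on \(\mathbb{B}\) enters: for \(\|z\bn\le m\) one has \(\|\Pi_n z\bn\le m\), hence \(\|b^n(z)\bn\le\sup_{\|w\bn\le m}\|b(w)\bn\) and \(\|a^n(z)\|_{L(\mathbb{B})}\le\big(\sup_{\|w\bn\le m}\|\sigma(w)\|_{L(\mathbb{B})}\big)^2\), both finite by \eqref{eq: lbdd}, while the jump part of \(\mathcal{K}^n f\) is dominated, through a second‑order Taylor estimate for \(g\), by a multiple of \(\int_E\big(\zeta_{B_m}(y)^2\wedge 1\big)F(\dd y)+F\big(\{y\colon\zeta_{B_m}(y)\ge\epsilon_0\}\big)<\infty\) by \eqref{eq: loc bdd B}, all bounds uniform in \(n\) (here \(B_m\triangleq\{\|z\bn\le m\}\)). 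The required convergence \(\mathcal{K}^n f\to\mathcal{K}f\) uniformly on compact sets then follows from \(\Pi_n z\to z\) uniformly on compacts, the continuity on \(\mathbb{B}\) of \(b,\sigma,v(y,\,\cdot\,)\), and dominated convergence with the same integrable majorants.

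Finally, tightness of \((P^n)_{n\in\en}\) would be established through Proposition \ref{prop: gen tight}. Applying It\^o's formula to \(\|Y^n\kn^2\) (valid since \(Y^n\) is \(\mathbb{K}_n\)‑valued), using the linear growth bounds \eqref{eq: lg}, the estimate \(\int_E\|v^n(y,z)\kn^2\,F(\dd y)\le(1+\|z\kn)^2\int_E\gamma(y)^2\,F(\dd y)\), and Gronwall's lemma (after a standard localization) yields \(\sup_{n\in\en}E\big[\sup_{s\le M}\|Y^n_s\kn^2\big]\le C_M(1+\|x\kn^2)<\infty\) for each \(M\); this is the step where \(\|x\kn<\infty\) matters and the reduction to a Dirac law pays off. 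Since bounded subsets of \(\mathbb{K}\) are relatively compact in \(\mathbb{B}\) by Lemma \ref{lem: emb comp}, Markov's inequality gives tightness of \((P^n\circ X_t^{-1})_{n\in\en}\) in \(\mathbb{B}\) for each \(t\). For the Aldous‑type condition, for stopping times \(\rho_n\le M\) and \(h_n\downarrow0\) I would split the increment \(Y^n_{\rho_n+h_n}-Y^n_{\rho_n}\) into its drift, continuous‑martingale and compensated‑jump parts and bound each in \(\mathbb{K}\)‑norm using the same growth bounds and the moment estimate, obtaining \(E\big[\|Y^n_{\rho_n+h_n}-Y^n_{\rho_n}\kn^2\big]\le C'_M h_n\to0\); via the boundedness of \(\iota\) and Markov's inequality this gives \(P^n\big(\|X_{\rho_n+h_n}-X_{\rho_n}\bn\ge\epsilon\big)\to0\). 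Proposition \ref{prop: gen tight} then yields tightness, and Corollary \ref{coro: existence}(i) produces a solution of the MP \((0,\mu,a,K,\varepsilon_x)\); together with the first paragraph this proves the theorem. The heart of the argument is the interplay of the two hypotheses: the linear growth on \(\mathbb{K}\) controls the moments, hence both the tightness of the time marginals (through the compact embedding) and the oscillation bound, whereas the local boundedness on \(\mathbb{B}\) is precisely what makes the generators \(\mathcal{K}^n f\) bounded on balls, as \eqref{eq: bdd assp} demands; keeping these complementary roles straight while checking the convergence of the generators in \textup{(M2)} is where the bookkeeping is heaviest.
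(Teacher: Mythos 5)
Your proposal is correct in its overall structure and leads to the same conclusion, but it follows a genuinely different route from the paper's.

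The paper constructs its approximations by mollifying the coefficients against a standard mollifier applied to the first $n$ coordinates $\theta_n x = (\langle x, h_1\bs,\dots,\langle x, h_n\bs)$, rescaled by a carefully chosen $\epsilon_n$. The payoff is that the mollified $b^n,\sigma^n,v^n$ become \emph{locally Lipschitz on $\mathbb{K}$} (Lemma \ref{lem: 1}), which allows the paper to invoke the infinite-dimensional Lipschitz existence-and-uniqueness theory of \cite{doi:10.1080/17442501003624407} directly on $\mathbb{K}$, then push the resulting laws through $\iota$ to $\Omega$. You instead compose with the orthogonal projections $\Pi_n$ onto $\mathbb{K}_n=\operatorname{span}(h_1,\dots,h_n)$, which yields a literally finite-dimensional jump-diffusion on $\mathbb{K}_n\cong\er^n$ with continuous, linearly growing coefficients, and appeal to classical finite-dimensional weak existence à la \cite{Stroock75}. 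Both choices produce approximating coefficients that remain contractions of the originals on $\mathbb{K}$ and on $\mathbb{B}$, so the uniform linear growth \eqref{eq: lg} and the local boundedness \eqref{eq: lbdd}--\eqref{eq: loc bdd B} are inherited with constants independent of $n$, which is all the tightness and (M2) bounds actually need; your version of Lemma \ref{lem: 3} is slightly easier because the compactness of $\{\Pi_n z : z\in K, n\in\en\}\cup K$ is elementary, whereas the paper has to verify compactness of the mollified shell $G_n$. On the other hand, the paper's Lipschitz route is entirely self-contained given \cite{doi:10.1080/17442501003624407}, while your appeal to finite-dimensional weak existence for continuous, unbounded jump-diffusion coefficients implicitly needs a truncation-plus-non-explosion step (Stroock's theorems assume bounded data); this is standard but you should spell it out rather than call it off-the-shelf. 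Finally, the paper avoids your preliminary reduction to Dirac initial laws: rather than a measurable-selection/uniformization argument, it simply runs the moment estimate against the truncation $Z=\1\{X_0\in K\}$ for a compact $K\subset\mathbb{K}$ with $\eta(K)\geq 1-\epsilon/2$, which handles an arbitrary Borel $\eta$ on $\mathbb{K}$ directly and is cleaner than invoking the Jankov--von Neumann uniformization. Your dominated-convergence check of (M2) should be sharpened in one spot: the first-order bound $2\|g'\|_\infty\|y^*\|\zeta_{B_m}$ is only in $L^2(F)$, not $L^1(F)$, so the integrable majorant must come from the second-order Taylor bound $\lesssim\|g''\|_\infty\|y^*\|^2(\|v\|^2+\|v^n\|^2)\lesssim\zeta_{B_m}^2$ combined with a separate bound on the finitely $F$-massive set of large jumps, exactly as in the paper's estimate of $K^n(x,y)$ at the end of Section \ref{sec: pf sm}.
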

We prove this theorem in Section \ref{sec: pf sm} below. Lipschitz conditions for the existence of (pathwise) unique solutions to SDEs of the type \eqref{eq: main SDE} can be found in \cite{doi:10.1080/17442501003624407,metivier}. A version of Theorem \ref{prop: existence} for SDEs driven by Wiener noise is given in \cite{GMR09}. We are not aware of any result in the direction of Theorem \ref{prop: existence} which allows L\'evy noise.

\begin{remark}
	\begin{enumerate}
	\item[\textup{(i)}] In case \(\mathbb{B}\) is finite-dimensional it is possible to take \(J = \textup{Id}\). Then, Theorem \ref{prop: existence} boils down to a classical  Skorokhod-type existence result for jump-diffusion SDEs.
	\item[\textup{(ii)}] Because there exists an \(\epsilon > 0\) such that \(h(x) = x\) for all \(x \in \mathbb{B} \colon \|x\| \leq \epsilon\), there exists a constant \(l > 0\) such that 
	\(
	\|h(x) - x\bn \leq l \|x\bn^2. 
	\)
	Thus, \eqref{eq: lbdd} and \eqref{eq: loc bdd B} imply that \(\mu\) as defined in Theorem \ref{prop: existence} satisfies
	\[
	\sup_{x \in G} \|\mu(x)\bn < \infty, \quad G \subset \mathbb{B} \text{ bounded}.
	\]
\item[(iii)]
	Note that for \(T \in L_2(\mathbb{H}, \mathbb{K})\) we have
	\begin{align*}
	\|T\|_{\textup{HS}(\mathbb{K})} &= \sum_{i = 1}^\infty\sum_{j = 1}^\infty \langle T u_i, e_j\ks^2 \\&= \sum_{i = 1}^\infty\sum_{j = 1}^\infty \langle J^{-1} T u_i, J^{-1} e_j\bs^2
	\\&= \sum_{i = 1}^\infty\sum_{j = 1}^\infty \langle J^{-1} T u_i, h_j\bs^2 \\&= \|J^{-1}  T\|_{\textup{HS}(\mathbb{B})},
	\end{align*}
	where \((u_k)_{k \in \mathbb{N}}\) is an orthonormal basis of \(\mathbb{H}\).
	Thus, \eqref{eq: lg} is implied by the linear growth condition (iii\('\)) as defined in \cite[Remark 2]{GMR09}.
	In Theorem \ref{prop: existence} the second linear growth condition (ii\('\)) from \cite[Remark 2]{GMR09} is weakened to the local boundedness assumption \eqref{eq: lbdd}.
		\end{enumerate}
\end{remark}

Let us comment on the idea of proof. The argument is based on Corollary \ref{coro: existence}, i.e. we construct an approximation sequence and verify its tightness via Aldous's tightness criterion, see Proposition \ref{prop: gen tight}. In infinite-dimensional Hilbert spaces closed balls are not compact and, consequently, moment bounds do not imply tightness. To overcome this problem we construct the approximation sequence on the smaller Hilbert space \(\mathbb{K}\). Because \(\mathbb{K}\) is compactly embedded in \(\mathbb{B}\), moment bounds for the approximation sequence on \(\mathbb{K}\) imply tightness on the larger Hilbert space \(\mathbb{B}\).

\section{A Diffusion Setting}\label{sec:3}
In this section we discuss the diffusion case as an important special case of our setting.
\subsection{The Setup}\label{sec: 5.1}
We slightly adjust our setup. Let \(\Omega\) be the space of all continuous functions \(\mathbb{R}_+ \to \mathbb{B}\), where \(\mathbb{B}\) is assumed to be a separable Hilbert space. As usual, we identify \(\mathbb{B}\) with its (topological) dual and equip \(\Omega\) with the local uniform topology. We set \(X, \mathscr{F}\) and \(\F = (\mathscr{F}_t)_{t \geq 0}\) as in Section \ref{2.1}.
Also in this case, \(\mathscr{F}\) is the Borel \(\sigma\)-field on \(\Omega\). 
We define \(\tau_z\) as in Section \ref{2.1}. Due to the continuous paths of \(X\) we have 
\[
\tau_z = \inf (t \geq 0 \colon \|X_t\| \geq z).
\]
For diffusions the coefficient \(K\) is not relevant and we remove it from all notations. The MP is defined as in Definition \ref{def: MP}. 
	Due to \cite[Problem 25, p. 153]{EK}, Theorems \ref{theo1} and \ref{prop: loc} also hold in this setting.

Let \(\mathbb{H}\) be a second separable Hilbert space and let \(b^n\) and \(a^n\) be as follows:
\begin{enumerate}
	\item[(a)] The coefficients \(b^n\colon \mathbb{B} \to \mathbb{B}\) are Borel.
	\item[(b)] The coefficients \(a^n \colon \mathbb{B} \to S^+ (\mathbb{B}, \mathbb{B})\) have decompositions \(a^n = \sigma^n (\sigma^n)^*\), where \(\sigma^n \colon \mathbb{B} \to L(\mathbb{H}, \mathbb{B})\) is Borel.
\end{enumerate}
Next, we define conditions on the coefficients \((b^n)_{n \in \mathbb{N}}\) and  \((a^n)_{n \in \mathbb{N}}\). 
\begin{enumerate}
	\item[(A1)] For all bounded sets \(G \subset \mathbb{B}\)
	\begin{align}\label{eq: tr}
		\sup_{n \in \mathbb{N}} \sup_{x \in G} \|b^n(x)\| + \sup_{n \in \mathbb{N}} \sup_{x \in G} \|\sigma^n(x)\|_\textup{HS} < \infty.
	\end{align}
	\item[(A2)]
	There exists a constant \(K > 0\) such that for all \(x \in \mathbb{B}\) and all \(n \in \mathbb{N}\)
	\begin{align}\label{eq: tr2}
	\|b^n(x)\| + \|\sigma^n(x)\|_\textup{HS} \leq K \big(1 + \|x\|\big).
	\end{align}
	\item[(A3)]
	For all bounded sets \(G \subset\mathbb{B}\)
	\begin{align}\label{eq: no tr}
		\sup_{n \in \mathbb{N}} \sup_{x \in G} \|b^n(x)\| + \sup_{n \in \mathbb{N}} \sup_{x \in G} \|\sigma^n(x)\|_o < \infty, 
	\end{align}
	where \(\|\cdot\|_o\) denotes the operator norm.
	\item[(A4)]	There exists a constant \(K > 0\) such that for all \(x \in \mathbb{B}\) and all \(n \in \mathbb{N}\)
	\begin{align}\label{eq: no tr2}
	\|b^n(x)\| + \|\sigma^n(x)\|_o \leq K \big(1 + \|x\|\big).
	\end{align}
\end{enumerate}
It is clear that 
\begin{align*}
\textup{(A2)} &\Rightarrow \textup{(A1)},\\ \textup{(A4)} &\Rightarrow \textup{(A3)}.
\end{align*}

Recall 
that a \(C_0\)-semigroup \((S_t)_{t \geq 0}\) is called compact if \(S_t\) is a compact operator for all \(t > 0\). Note that a \(C_0\)-semigroup with generator \(A\) is compact if and only if it is continuous on \((0, \infty)\) in the uniform operator topology and the resolvent of \(A\) is compact, see \cite[Theorem 3.3, p. 48]{pazy2012semigroups}. In particular, by \cite[(2.5), p. 235]{pazy2012semigroups}, an analytic semigroup (see \cite[Definition 5.1, p. 60]{pazy2012semigroups}) whose generator has a compact resolvent is compact.

We also formulate conditions on the linearity \(A\):
\begin{enumerate}
	\item[\textup{(A5)}] The operator \(A\) is the generator of a compact \(C_0\)-semigroup.
	\item[\textup{(A6)}] The operator \(A\) is the generator of a \(C_0\)-semigroup \((S_t)_{t \geq 0}\) and there exists a \(\lambda \in (0, \frac{1}{2})\) and an \(\epsilon > 0\) such that
	\begin{align}\label{eq: convSG}
	\int_0^\epsilon t^{- 2 \lambda} \|S_t\|^2_\textup{HS} \dd t < \infty.
	\end{align}
\end{enumerate}
\begin{lemma}\label{lem: for all conv}
	\begin{enumerate}
		\item[\textup{(i)}]
	\textup{(A6)} \(\Rightarrow\) \textup{(A5)}. 
		\item[\textup{(ii)}] If \eqref{eq: convSG} holds for some \(\epsilon > 0\), then \eqref{eq: convSG} holds for all \(\epsilon > 0\).
\end{enumerate}
\end{lemma}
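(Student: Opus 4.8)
The plan is to handle part~(ii) first, since part~(i) will use it. For part~(ii), suppose \eqref{eq: convSG} holds for some \(\epsilon_0 > 0\); I want to deduce it for an arbitrary \(\epsilon > 0\). If \(\epsilon \leq \epsilon_0\) there is nothing to prove, so assume \(\epsilon > \epsilon_0\). Then \(\int_0^\epsilon t^{-2\lambda}\|S_t\|^2_\textup{HS}\,\dd t = \int_0^{\epsilon_0} t^{-2\lambda}\|S_t\|^2_\textup{HS}\,\dd t + \int_{\epsilon_0}^\epsilon t^{-2\lambda}\|S_t\|^2_\textup{HS}\,\dd t\), and the first integral is finite by assumption. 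For the second integral, note that on \([\epsilon_0, \epsilon]\) the factor \(t^{-2\lambda}\) is bounded (by \(\epsilon_0^{-2\lambda}\)), so it suffices to show \(\int_{\epsilon_0}^\epsilon \|S_t\|^2_\textup{HS}\,\dd t < \infty\). The key observation is the semigroup bound \(\|S_t\|_\textup{HS} = \|S_{t - \epsilon_0/2}\, S_{\epsilon_0/2}\|_\textup{HS} \leq \|S_{t-\epsilon_0/2}\|_o\,\|S_{\epsilon_0/2}\|_\textup{HS}\) for \(t \geq \epsilon_0/2\); since a \(C_0\)-semigroup satisfies \(\|S_s\|_o \leq M e^{\omega s}\) for some constants \(M, \omega\) (uniform for \(s\) in a bounded interval), the map \(t \mapsto \|S_t\|_\textup{HS}\) is bounded on \([\epsilon_0, \epsilon]\), and the integral over a finite interval of a bounded function is finite. (Here \(\|S_{\epsilon_0/2}\|_\textup{HS} < \infty\) follows from the hypothesis, since \(t^{-2\lambda}\|S_t\|^2_\textup{HS}\) is integrable near \(0\) forces \(\|S_t\|_\textup{HS} < \infty\) for a.e.\ small \(t\), and then Hilbert--Schmidtness propagates forward along the semigroup as just argued.)

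For part~(i), I want to show that \eqref{eq: convSG} implies that \(S_t\) is a compact operator for every \(t > 0\). A Hilbert--Schmidt operator is compact, so it is enough to show \(\|S_t\|_\textup{HS} < \infty\) for every \(t > 0\). Fix \(t > 0\). By part~(ii) we may take the \(\epsilon\) in \eqref{eq: convSG} larger than \(t\); then \(\int_0^\epsilon s^{-2\lambda}\|S_s\|^2_\textup{HS}\,\dd s < \infty\), so in particular \(\|S_s\|_\textup{HS} < \infty\) for Lebesgue-almost every \(s \in (0, t)\). Pick such an \(s_0 \in (0, t)\) with \(\|S_{s_0}\|_\textup{HS} < \infty\). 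Then \(S_t = S_{t - s_0}\, S_{s_0}\), and since the Hilbert--Schmidt operators form a two-sided operator ideal, \(\|S_t\|_\textup{HS} \leq \|S_{t - s_0}\|_o\,\|S_{s_0}\|_\textup{HS} < \infty\). Hence \(S_t\) is Hilbert--Schmidt, thus compact, for every \(t > 0\), which is precisely \textup{(A5)}.

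The main obstacle — really the only subtle point — is ensuring that \eqref{eq: convSG} genuinely yields \(\|S_s\|_\textup{HS} < \infty\) for \emph{some} small \(s\) rather than merely a statement about a divergent integrand. This is a routine measure-theoretic remark: if the integral of the nonnegative measurable function \(s \mapsto s^{-2\lambda}\|S_s\|^2_\textup{HS}\) over \((0,\epsilon)\) is finite, the integrand is finite a.e., and dividing by the (finite, positive) weight \(s^{-2\lambda}\) shows \(\|S_s\|_\textup{HS} < \infty\) a.e.\ on \((0,\epsilon)\). Everything else is the standard ideal property of Hilbert--Schmidt operators together with local boundedness of \(s \mapsto \|S_s\|_o\) for a \(C_0\)-semigroup. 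No growth rate or analyticity is needed beyond what is assumed.
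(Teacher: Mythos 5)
Your proof is correct and follows essentially the same route as the paper: from finiteness of the integral, extract some small $s_0$ with $\|S_{s_0}\|_\textup{HS}<\infty$, propagate Hilbert--Schmidtness forward via $S_t = S_{t-s_0}S_{s_0}$ and the ideal inequality $\|S_t\|_\textup{HS}\le\|S_{t-s_0}\|_o\|S_{s_0}\|_\textup{HS}$, and control everything away from $0$ by the local boundedness of $\|\cdot\|_o$ for a $C_0$-semigroup. The only cosmetic difference is that you invoke part (ii) inside part (i), which is unnecessary (the original $\epsilon$ already supplies some $s_0<\min(\epsilon,t)$ with $S_{s_0}$ Hilbert--Schmidt); the paper proves (i) directly without reference to (ii).
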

\begin{proof}
	(i). Fix \(t > 0\) and note that, due to \eqref{eq: convSG}, we find an \(s \in(0, t)\) such that \(S_s\) is Hilbert-Schmidt. Thus, the operator \(S_t\) is Hilbert-Schmidt, because it is the product of the bounded operator \(S_{t - s}\) and the Hilbert-Schmidt operator \(S_s\).
	We conclude that \(S_t\) is compact.
	
	(ii). Suppose that \eqref{eq: convSG} holds for \(\epsilon >  0\) and let \(T > \epsilon\). There exists an \(s \in (0, \epsilon)\) such that \(S_s\) is Hilbert-Schmidt. Moreover, because \((S_t)_{t \geq 0}\) is a \(C_0\)-semigroup, there are constants \(M \geq 1\) and \(\alpha \geq 0\) such that 
	\(
	\|S_t\|_o \leq M e^{\alpha t}
	\) for all \(t \geq 0\), see \cite[Theorem 2.2, p. 4]{pazy2012semigroups}.
	We obtain that 
	\begin{align*}
	\int_0^T t^{-2 \alpha} \|S_t\|^2_\textup{HS} \dd t 
	&\leq  \int_0^\epsilon t^{-2 \alpha} \|S_t\|^2_\textup{HS} \dd t + \int_\epsilon^T t^{-2 \alpha} \|S_s\|^2_\textup{HS} M^2 e^{2\alpha (t - s)} \dd t
	\\&\leq  \int_0^\epsilon t^{-2 \alpha} \|S_t\|^2_\textup{HS} \dd t + (T - \epsilon) \epsilon^{- 2 \alpha} \|S_s\|^2_\textup{HS} M^2 e^{2 \alpha (T - s)} < \infty.
	\end{align*}
	This completes the proof.
	\end{proof}
\subsection{A Tightness Condition}\label{sec: 5.2}
Next, we study tightness of the sequence \((P^n \circ X^{-1}_{\cdot \wedge \tau_m})_{n \in \mathbb{N}}\) when \(P^n \in \mathcal{M}(A, b^n, a^n, \eta^n)\).
A proof for the following proposition can be found in Section \ref{proof: prop1} below.
\begin{proposition}\label{prop: tight}
	Let \(m \in [0, \infty]\) and assume one of the following:
		\begin{enumerate}
		\item[\textup{(i)}] \(m < \infty\), \textup{(A1)} and \textup{(A5)} hold.
		\item[\textup{(ii)}] \(m < \infty\), \textup{(A3)} and \textup{(A6)} hold.
		\item[\textup{(iii)}] \(m = \infty\), \textup{(A2)} and \textup{(A5)} hold.
		\item[\textup{(iv)}] \(m = \infty\), \textup{(A4)} and \textup{(A6)} hold.
		\end{enumerate}
If \(P^n \in \mathcal{M}(A, b^n, a^n, \eta^n)\) and the sequence \((\eta^n)_{n \in \mathbb{N}}\) is tight, then \((P^n \circ X_{\cdot \wedge \tau_m}^{-1})_{n \in \mathbb{N}}\) is tight.
\end{proposition}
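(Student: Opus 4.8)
The plan is to verify the two conditions of Aldous's tightness criterion (Proposition \ref{prop: gen tight}) applied to the stopped processes \(X^{\tau_m} \triangleq X_{\cdot \wedge \tau_m}\) under \(P^n\). First, since Proposition \ref{prop: A1} guarantees that \(f(X)\) generates local martingales for \(f \in \mathcal{B}\), I would use a mild-solution / variation-of-constants representation: for \(y^* \in D(A^*)\) the process \(\langle X_t, y^*\rangle\) is (up to a local martingale) governed by \(\langle X_t, A^* y^*\rangle + \langle b^n(X_t), y^*\rangle\) in its drift and \(\langle a^n(X_t) y^*, y^*\rangle\) in its quadratic variation. Since \(A\) generates the \(C_0\)-semigroup \((S_t)_{t\ge0}\), one obtains for the stopped process a representation of the form
\[
X_{t\wedge\tau_m} = S_t X_0 + \int_0^{t\wedge\tau_m} S_{t-s} b^n(X_s)\, \dd s + \int_0^{t\wedge\tau_m} S_{t-s} \sigma^n(X_s)\, \dd W^n_s,
\]
valid in a weak/cylindrical sense (this is the stochastic-convolution identity that follows from the martingale problem; I would invoke it via the factorization lemma or the martingale representation associated with the MP). On the event \(\{t \le \tau_m\}\) (or with the time argument in the integrands truncated at \(\tau_m\)), the integrands are bounded: \(\|b^n(X_s)\| \le c_m\) and \(\|\sigma^n(X_s)\|_{\textup{HS}} \le c_m\) (resp. \(\|\sigma^n(X_s)\|_o \le c_m\)) uniformly in \(n\), by (A1) (resp. (A3)); in the cases \(m=\infty\) the linear-growth conditions (A2)/(A4) together with an a priori moment estimate obtained from Gronwall's inequality play the same role.

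The key tool is the \emph{factorization method} / compactness argument from \cite{doi:10.1080/17442509408833868}: under (A5), compactness of \(S_t\) for \(t>0\) forces the stochastic convolution to take values, with high probability, in a compact set, which is exactly what is needed for tightness of the one-dimensional marginals (condition (i) of Proposition \ref{prop: gen tight}). Concretely, I would write the stochastic convolution as \(\frac{\sin(\pi\lambda)}{\pi}\int_0^t (t-s)^{\lambda-1} S_{t-s} Y^n_s\, \dd s\) where \(Y^n_s = \int_0^s (s-r)^{-\lambda} S_{s-r}\sigma^n(X_r)\,\dd W^n_r\), estimate the \(L^p\)-norm of \(Y^n\) uniformly in \(n\) (using the Hilbert–Schmidt bounds and, under (A6), the integrability \(\int_0^\epsilon t^{-2\lambda}\|S_t\|_{\textup{HS}}^2\dd t<\infty\); under (A5) one first restricts to \(\mathbb{K}\)-valued approximants or uses that the semigroup maps bounded sets into relatively compact sets), and then observe that the operator \(g \mapsto \int_0^\cdot (t-s)^{\lambda-1}S_{t-s}g(s)\,\dd s\) maps \(L^p\) into a space that embeds compactly into \(C([0,M];\mathbb{B})\) precisely because each \(S_t\) is compact. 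This yields both marginal tightness and, with a little more care, the Aldous modulus estimate.

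For condition (ii) of Proposition \ref{prop: gen tight}, given stopping times \(\rho_n \le M\) and \(h_n \downarrow 0\), I would split \(X_{(\rho_n+h_n)\wedge\tau_m} - X_{\rho_n\wedge\tau_m}\) into the semigroup term \((S_{h_n}-\mathrm{Id})S_{\rho_n}X_0\), the drift integral over \((\rho_n, \rho_n+h_n]\), and the stochastic-convolution increment; strong continuity of \((S_t)\) handles the first, the uniform bound on \(b^n\) times \(h_n\to0\) handles the second (with the \(\|(S_{h_n}-\mathrm{Id})\cdot\|\) factor also controlled via the factorization representation so that it vanishes using dominated convergence and compactness), and a Burkholder–Davis–Gundy / Itô-isometry estimate combined with the Hilbert–Schmidt bounds handles the third. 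In the cases \(m=\infty\), one additionally needs \(\sup_n \mathbb{E}^{P^n}[\sup_{t\le M}\|X_t\|^2]<\infty\), which follows from the linear-growth conditions (A2)/(A4), the tightness of \((\eta^n)_n\) (giving \(\sup_n\mathbb{E}[\|X_0\|^2]<\infty\) after a truncation, or more carefully a uniform tail bound), and Gronwall's lemma applied to the convolution representation.

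The main obstacle I expect is handling case (i)/(iii) — compactness of the semigroup \emph{without} the quantitative Hilbert–Schmidt integrability of (A6). Here \(\|S_t\|_{\textup{HS}}\) need not even be finite, so the stochastic convolution need not exist as a Hilbert-space-valued process in the naive sense; one must instead exploit compactness qualitatively, e.g. by approximating \(\sigma^n\) and \(S_t\) appropriately, working on the subspace \(\mathbb{K}\) (as in the SDE application of Section \ref{sec:sm}), or invoking the precise form of the compactness criterion of \cite{doi:10.1080/17442509408833868}, which is designed exactly to cover the merely-compact case. Making this rigorous — identifying the right intermediate space and verifying the compact embedding — is the technical heart of the proof; the rest is bookkeeping with the martingale-problem representation and standard estimates.
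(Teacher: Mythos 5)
Your plan diverges from the paper in two ways, one a genuine difference of route, one a misconception.

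\emph{Different route.} The paper does not invoke Aldous's criterion for this proposition. Instead, after obtaining the mild--solution representation from \cite[Theorem 3.6]{EJP2924} and the factorization identity of \cite{doi:10.1080/17442508708833480}, it exhibits an \emph{explicit} relatively compact subset \(\Lambda'(R,m)\) of \(C([0,T];\mathbb{B})\) carrying \(P^n\)-mass at least \(1-\varepsilon\) uniformly in \(n\): the three summands \(S_{\cdot\wedge\tau_m}X_0\), \((G_1 b^n(X_{\cdot\wedge\tau_m}))(\cdot\wedge\tau_m)\) and \(\tfrac{\sin(\pi\theta)}{\pi}(G_\theta Y^{n,m,\theta})(\cdot\wedge\tau_m)\) are placed in compact sets via, respectively, uniform continuity of \((t,x)\mapsto S_t x\) on \([0,T]\times K\), and compactness of the Riemann--Liouville operators \(G_\xi\colon L^p([0,T];\mathbb{B})\to C([0,T];\mathbb{B})\) (\cite[Proposition 8.4]{DePrato}), with Chebyshev controlling the \(L^p\)-radius of the preimages. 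Tightness then follows immediately; there is no separate Aldous modulus estimate. Your Aldous route is plausible in outline, but once you have a compact path set carrying uniformly high mass you are done, so the modulus step is a detour; and the increment bookkeeping you gesture at (the \((S_{h_n}-\mathrm{Id})\)-factor acting on the \emph{entire past} stochastic convolution, not just on \(S_{\rho_n}X_0\)) is exactly the delicate point the factorization approach is designed to avoid.

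\emph{Misconception.} Your stated ``main obstacle''---that in cases \textup{(i)}/\textup{(iii)} one has compactness of the semigroup ``without the quantitative Hilbert--Schmidt integrability'' and hence ``the stochastic convolution need not exist as a Hilbert-space-valued process''---rests on a misreading of the hypotheses. Conditions \textup{(A1)} and \textup{(A2)} assume \(\|\sigma^n(x)\|_{\textup{HS}}\) is locally bounded, respectively grows linearly; thus \(\sigma^n(X_{s\wedge\tau_m})\) is Hilbert--Schmidt-bounded and the stochastic convolution exists classically. In these cases the \(L^p\)-bound on \(Y^{n,m,\theta}\) follows from a Burkholder-type estimate as in \cite[Proposition 7.3]{DePrato}, with no need to pass to a subspace \(\mathbb{K}\) or to approximate \(\sigma^n\); compactness of the semigroup enters only through compactness of \(G_\xi\). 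It is only in cases \textup{(ii)}/\textup{(iv)} (\textup{(A3)}/\textup{(A4)}: operator-norm bounds on \(\sigma^n\)) that the integrability condition \eqref{eq: convSG} of \textup{(A6)} is needed, via the estimate of \cite[Proposition 6.3.5]{roeckner15}, to control the stochastic convolution. So the ``technical heart'' you identify does not exist; the actual dichotomy is in which Hilbert--Schmidt structure is available (the coefficient vs. the semigroup), and either assumption suffices for the \(L^p\)-bounds once the right estimate is invoked.

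Minor points: you should also verify that the intermediate \(\theta\in(1/p,1/2)\) can be chosen (this is where \(p>2\) and, under \textup{(A6)}, the constraint \(\theta=\lambda<1/2\) with \(1/p<\lambda\) come in); and tightness of \((\eta^n)_n\) enters, as in the paper, by restricting to a compact initial set \(K\) before taking \(p\)-moments, so that the constant in Gronwall's bound is uniform in \(n\) without any moment hypothesis on the \(\eta^n\).
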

\begin{remark}\label{rem1}
	\begin{enumerate}
\item[(i)]	Because \(\tau_\infty = \infty\), Proposition \ref{prop: tight} includes a tightness criterion for the global sequence \((P^n)_{n \in \mathbb{N}}\) as well as for the localizations \((P^n \circ X^{-1}_{\cdot \wedge \tau_m})_{n \in \mathbb{N}}\). Part (iii) of Proposition \ref{prop: tight} is known, see \cite[Remark 3.2, Lemma 3.3]{doi:10.1080/07362999708809484}.
\item[(ii)]
	The proof of  Proposition \ref{prop: tight} uses the compactness method from \cite{doi:10.1080/17442509408833868}, which is based on the compactness of the generalized Riemann-Liouville operator and the factorization formula introduced in \cite{doi:10.1080/17442508708833480}. In the presence of jumps, the image space of the generalized Riemann-Liouville operator is not suitable anymore and the factorization formula might fail, see \cite[Section 11.4]{peszat2007stochastic} for comments.
\item[(iii)] Continuity and linear growth conditions for the existence of solutions to MPs can be found in \cite{doi:10.1080/17442509408833868}.
	\end{enumerate}
\end{remark}

\subsection{Corollaries}\label{sec: 5.3}
In this section we collect corollaries to the results from Section \ref{sec: MR} and Proposition \ref{prop: tight}. 
Let \(b \colon \mathbb{B} \to \mathbb{B}\) and \(a \colon \mathbb{B} \to S^+(\mathbb{B}, \mathbb{B})\) be Borel maps such that \(a = \sigma \sigma^*\) for a Borel map \(\sigma \colon \mathbb{B} \to L(\mathbb{H}, \mathbb{B})\).
We formulate the following conditions:
\begin{enumerate}
	\item[(A7)] For all \(y^* \in D(A^*)\) the maps
	\[
	x \mapsto \la b(x), y^*\ra, \la a(x) y^*, y^*\ra
	\]
	are continuous.
	\item[(A8)]
	For all bounded sets \(G \subset \mathbb{B}\)
	\begin{align*}
	 \sup_{x \in G} \|b(x)\| + \sup_{x \in G} \|\sigma(x)\|_\textup{HS} < \infty.
	\end{align*}
	\item[(A9)] For all bounded sets \(G \subset \mathbb{B}\)
	\begin{align*}
	\sup_{x \in G} \|b(x)\| + \sup_{x \in G} \|\sigma(x)\|_o < \infty.
	\end{align*}
\end{enumerate}
\begin{corollary}\label{coro: diffusion Feller}
	Suppose that one of the following holds:
	\begin{enumerate}
		\item[\textup{(i)}] \textup{(A5), (A7)} and \textup{(A8)} hold.
		\item[\textup{(ii)}] \textup{(A6), (A7)} and \textup{(A9)} hold.
	\end{enumerate}
	If for all \(x \in \mathbb{B}\) the MP \((A, b, a, \varepsilon_x)\) has the unique solution \(P_x\), then \(\{P_x, x \in \mathbb{B}\}\) is Feller.
\end{corollary}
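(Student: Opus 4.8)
The goal is to deduce the Feller property of $\{P_x, x\in\mathbb B\}$ from the abstract characterization in Corollary \ref{coro: Feller}, which reduces the task to verifying a tightness condition: for every sequence $x_n\to x$ in $\mathbb B$, the sequence $(P_{x_n})_{n\in\mathbb N}$ is tight. The natural route is to invoke Theorem \ref{prop: loc} with the constant approximation $A^n\equiv A$, $b^n\equiv b$, $a^n\equiv a$, $K^n$ absent (diffusion case), and $\eta^n\triangleq\varepsilon_{x_n}$, $\eta\triangleq\varepsilon_x$. With this choice, $P^n\triangleq P_{x_n}$ solves the MP $(A,b^n,a^n,\eta^n)$, and one checks the hypotheses (M1)–(M3) of Theorem \ref{theo1}/Theorem \ref{prop: loc} plus the localized tightness of $(P^n\circ X^{-1}_{\cdot\wedge\tau_m})_{n\in\mathbb N}$ for each $m\in\mathbb N$.

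\smallskip
\textbf{Step 1: Verify (M1), (M2), (M3).} Condition (M3) is immediate, since $\varepsilon_{x_n}\to\varepsilon_x$ weakly whenever $x_n\to x$. Condition (M1) is exactly (A7): for $f=g(\langle\cdot,y^*\rangle)\in\mathcal D$ with $y^*\in D(A^*)$, the map $x\mapsto\mathcal K f(x)$ is a continuous function of $\langle x,A^*y^*\rangle$, $\langle b(x),y^*\rangle$ and $\langle a(x)y^*,y^*\rangle$ (all continuous in $x$ under (A7), using that $x\mapsto\langle x,A^*y^*\rangle$ is trivially continuous), so (M1) holds in both cases (i) and (ii). For (M2), since the coefficients are constant in $n$ one simply takes $f^n\triangleq f$; then $f^n-f\equiv0$ and $\mathcal K^nf^n-\mathcal Kf\equiv0$, so the uniform-on-compacts convergence in (M2) is trivial, and the uniform bound \eqref{eq: bdd assp} reads $\sup_{\|x\|\le m}|\mathcal Kf(x)|<\infty$ for each $m$. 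This last bound is the only nonvacuous point: it follows from the local boundedness of $b$ and $\sigma$ on bounded sets — (A8) in case (i), (A9) in case (ii) — together with the boundedness of $g''$ and $g'$ and the fact that $\langle x,A^*y^*\rangle$ is bounded on bounded sets.

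\smallskip
\textbf{Step 2: Verify the localized tightness.} This is where Proposition \ref{prop: tight} enters. With $b^n\equiv b$, $\sigma^n\equiv\sigma$ constant sequences, condition (A1) is precisely (A8), and condition (A3) is precisely (A9); moreover $(\eta^n)_{n\in\mathbb N}=(\varepsilon_{x_n})_{n\in\mathbb N}$ is tight because $x_n\to x$ (the set $\{x_n:n\in\mathbb N\}\cup\{x\}$ is compact). Hence in case (i) — (A5) and (A8) hold — part (i) of Proposition \ref{prop: tight} (with $m<\infty$) gives tightness of $(P^n\circ X^{-1}_{\cdot\wedge\tau_m})_{n\in\mathbb N}$ for every $m\in\mathbb N$; in case (ii) — (A6) and (A9) hold — part (ii) of Proposition \ref{prop: tight} does the same.

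\smallskip
\textbf{Step 3: Conclude.} All hypotheses of Theorem \ref{prop: loc} are met, so $x\mapsto P_x$ is Borel, $\int P_y\,\varepsilon_x(\de y)=P_x$ is the unique solution of $(A,b,a,\varepsilon_x)$, and $P^n=P_{x_n}\to\int P_y\,\varepsilon_x(\de y)=P_x$ weakly as $n\to\infty$. Since the sequence $x_n\to x$ was arbitrary, condition (ii) of Corollary \ref{coro: Feller} holds (indeed we have directly established (i)), and therefore $\{P_x,x\in\mathbb B\}$ is Feller. The main obstacle is purely bookkeeping: matching the two hypothesis packages (A5)/(A7)/(A8) and (A6)/(A7)/(A9) to the correct cases of Proposition \ref{prop: tight} and checking that the constant-coefficient specialization satisfies the slightly differently phrased conditions (M1)–(M3); there is no substantive new estimate to perform, as all the analytic work is already contained in Proposition \ref{prop: tight} and Theorem \ref{prop: loc}.
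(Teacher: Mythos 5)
Your proof is correct and follows the same route as the paper: fix a sequence $x_n\to x$, apply Proposition~\ref{prop: tight} (matching (A8)/(A9) to (A1)/(A3) for the constant sequence $b^n\equiv b$, $\sigma^n\equiv\sigma$, and using tightness of $\{\varepsilon_{x_n}\}$) to get localized tightness, then invoke Theorem~\ref{prop: loc} to conclude $P_{x_n}\to P_x$ weakly. The only difference is that you spell out the verification of (M1)--(M3), which the paper leaves implicit; this is fine and your checks (including that $f^n=f\in\mathcal D\subset\mathcal B^n$ works for (M2), with the bound supplied by Proposition~\ref{prop: A1}) are accurate.
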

\begin{proof}
	Let \((x_n)_{n \in \mathbb{N}} \subset \mathbb{B}\) and \(x \in \mathbb{B}\) such that \(x_n \to x\) as \(n \to \infty\).
Due to Proposition \ref{prop: tight}, the sequence \((P_{x_n} \circ X_{\cdot \wedge \tau_m}^{-1})_{n \in \mathbb{N}}\) is tight for all \(m \in \mathbb{N}\) and it follows from Theorem \ref{prop: loc} that \(P_{x_n} \to P_x\) weakly as \(n \to \infty\). We conclude that the family \(\{P_x, x \in \mathbb{B}\}\) is Feller.
\end{proof}
This observation generalizes the classical result for the finite-dimensional case given in \cite[Corollary 11.1.5]{SV} and it extends the infinite-dimensional result \cite[Corollary 4.4]{EJP2924} by replacing the tightness assumption with explicit conditions implying it.
We also formulate the following condition:
\begin{enumerate}
\item[(A10)] For all \(y^* \in D(A^*)\) we have 
\(
\la b^n , y^*\ra \to \la b, y^*\ra, \la a^n y^*, y^*\ra \to \la a y^*, y^*\ra
\)
as \(n \to \infty\) uniformly on compact subsets of \(\mathbb{B}\). 
\end{enumerate}
\begin{corollary}\label{coro: uni}
		Let \(P^n \in \mathcal{M}(A, b^n, a^n,  \eta^n)\) for all \(n \in \mathbb{N}\) and suppose that one of the following holds:
	\begin{enumerate}
		\item[\textup{(i)}] \textup{(A1), (A5), (A7), (A9)} and \textup{(A10)} hold.
		\item[\textup{(ii)}] \textup{(A3), (A6), (A7), (A9)} and \textup{(A10)} hold.
	\end{enumerate}
	If for all \(x \in \mathbb{B}\) the MP \((A, b, a, \varepsilon_x)\) has a unique solution \(P_x\), then \(x \mapsto P_x\) is Borel, \(\int P_x \eta(\dd x)\) is the unique solution to the MP \((A, b, a, \eta)\) and \(P^n \to \int P_x \eta(\dd x)\) weakly as \(n \to \infty\) whenever \(\eta^n \to \eta\) weakly as \(n \to \infty\).
\end{corollary}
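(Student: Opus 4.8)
The plan is to reduce the statement to Theorem \ref{prop: loc} by verifying its three structural hypotheses: the conditions (M1), (M2), (M3), and the tightness of the localized sequences $(P^n \circ X_{\cdot\wedge\tau_m}^{-1})_{n\in\mathbb N}$ for each $m\in\mathbb N$. Once these are in place, Theorem \ref{prop: loc} delivers exactly the desired conclusion (Borel measurability of $x\mapsto P_x$, identification of $\int P_x\,\eta(\dd x)$ as the unique solution, and weak convergence $P^n\to\int P_x\,\eta(\dd x)$). Since we are in the diffusion setting, the kernel $K$ is absent and the operator $\mathcal{K}$ from \eqref{K} reduces to the drift-plus-second-order part only; recall that by the remark at the end of Section \ref{sec:3} Theorems \ref{theo1} and \ref{prop: loc} remain valid here.

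First I would check (M1): for $f = g(\la\cdot,y^*\ra)\in\mathcal D$ with $y^*\in D(A^*)$, one has
\[
\mathcal{K}f(x) = \big(\la x, A^*y^*\ra + \la b(x), y^*\ra\big) g'(\la x, y^*\ra) + \tfrac12 \la a(x)y^*, y^*\ra g''(\la x, y^*\ra),
\]
and continuity of $x\mapsto\mathcal{K}f(x)$ follows from (A7) together with continuity of $x\mapsto\la x, A^*y^*\ra$ and of $g', g''$. Next (M3) is immediate, since it is just the hypothesis $\eta^n\to\eta$ weakly. The verification of (M2) is the genuinely bookkeeping-heavy step: for each $f = g(\la\cdot,y^*\ra)\in\mathcal D$ I would take the constant sequence $f^n \triangleq f$ (note $D(A^*)$ is the same across $n$ since $A^n \equiv A$, so $f^n\in\mathcal B^n$). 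The uniform bound $\sup_n\sup_{x\in\mathbb B}|f^n(x)| = \|g\|_\infty < \infty$ is clear, and the bound $\sup_n\sup_{\|x\|\le m}|\mathcal{K}^n f^n(x)| < \infty$ follows from (A1) (under (i)) or (A3) (under (ii)) — the drift term $\la b^n(x), y^*\ra$ is controlled by $\|b^n(x)\|\,\|y^*\|_o$, the linear term $\la x, A^*y^*\ra$ by $m\|A^*y^*\|_o$, and the diffusion term $\la a^n(x)y^*, y^*\ra = \|(\sigma^n(x))^*y^*\|^2$ by $\|\sigma^n(x)\|_{\textup{HS}}^2\|y^*\|_o^2$ or $\|\sigma^n(x)\|_o^2\|y^*\|_o^2$ respectively — all uniformly bounded on bounded sets. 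Finally, the uniform-on-compacts convergence $|\mathcal{K}^n f^n - \mathcal{K}f|\to 0$ is precisely where (A10) enters: on a compact $C\subset\mathbb B$ we have $\sup_{x\in C}|\la b^n(x),y^*\ra - \la b(x),y^*\ra|\to 0$ and $\sup_{x\in C}|\la a^n(x)y^*,y^*\ra - \la a(x)y^*,y^*\ra|\to 0$, and since $g', g''$ are bounded and $\sup_{x\in C}|g^{(k)}(\la x,y^*\ra)|$ involves no $n$-dependence, the product estimate closes; the $|f^n - f| = 0$ part is trivial.

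For the tightness of $(P^n\circ X_{\cdot\wedge\tau_m}^{-1})_{n\in\mathbb N}$ I would invoke Proposition \ref{prop: tight}: under hypothesis (i) of the corollary we have (A1) and (A5), which is case (i) of Proposition \ref{prop: tight} (after noting the initial laws $(\eta^n)$ are tight, being weakly convergent hence relatively compact by Prohorov); under hypothesis (ii) we have (A3) and (A6), which is case (ii). In either case Proposition \ref{prop: tight} yields tightness of the localized sequences for every $m\in\mathbb N$. With (M1), (M2), (M3) and this tightness established, all hypotheses of Theorem \ref{prop: loc} hold and the conclusion follows verbatim. The main obstacle is really only the careful verification of the uniform bound \eqref{eq: bdd assp} and the uniform-on-compacts convergence in (M2); everything else is either a direct citation or an immediate consequence of the listed conditions. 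One should also double-check that (A9)'s role is genuinely needed here or whether (A1)/(A3) already subsume it in the diffusion MP framework — in the statement (A9) is listed in both cases, so I would simply use it where convenient and not worry about minimality.
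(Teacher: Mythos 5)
Your proposal is correct and is exactly the expansion of the paper's one-line proof, which simply cites Proposition \ref{prop: tight} and Theorem \ref{prop: loc}; your verification of (M1) from (A7), of (M3) from the hypothesis, and of (M2) via the constant choice $f^n = f$ (valid since $A^n \equiv A$) with bounds from (A1)/(A3) and convergence from (A10) is precisely the implicit content of that citation. Regarding the point you flagged: (A9) is there to guarantee that the \emph{limiting} coefficients $b,a$ satisfy the standing boundedness conditions (ii)--(iii) of Section \ref{2.1}, so that the limit MP is admissible and Proposition \ref{prop: A1} applies to $\mathcal{K}f$ inside the proof of Theorem \ref{theo1}; (A1)/(A3) only control the approximating sequence $b^n,\sigma^n$, not the limit.
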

\begin{proof}
	This follows from Proposition \ref{prop: tight} and Theorem \ref{prop: loc}.
\end{proof}
Corollary \ref{coro: uni} is a generalization of the classical finite-dimensional result \cite[Theorem 11.1.4]{SV} and it extends the related infinite-dimensional result \cite[Lemma 4.3]{EJP2924} via explicit conditions for tightness.
Part (i) of Corollary \ref{coro: uni} can be compared to \cite[Theorems 2.1]{doi:10.1080/07362999708809484} and part (ii) can be compared to \cite[Theorem 2.3]{doi:10.1080/07362999708809484}. We stress that time-inhomogeneous SPDEs were studied in \cite{doi:10.1080/07362999708809484}. For the time-homogeneous case the assumptions in \cite[Theorems 2.1 and 2.3]{doi:10.1080/07362999708809484} are the following:
\begin{enumerate}
	\item[(H1)] For all initial laws \(\eta\) the MP \((A, b, a, \eta)\) has at most one solution.
	\item[(H2)] For all \(y \in \mathbb{B}\) the families \(\{\langle b, y\rangle, \langle b^n, y\rangle \colon n \in \mathbb{N}\}\) and \(\{\langle a y, y\rangle, \langle a^n y, y\rangle \colon  n \in \mathbb{N}\}\) are uniformly equicontinuous on all open and bounded subsets of \(\mathbb{B}\).
	\item[(H3)] The convergence in (A10) holds pointwise. 
	\item[(H4)] (A2) respectively (A4) holds and a similar linear growth condition also holds for \(b\) and \(a\).
	\item[(H5)] The initial laws \(\{\eta, \eta^n \colon n \in \mathbb{N}\}\) satisfy a moment condition and \(\eta^n \to \eta\) weakly as \(n \to \infty\).
	\item[(H6)] (A5) respectively a condition comparable to (A6) holds, see \cite[Remark 3]{doi:10.1080/17442509408833868} and \cite[Remark 2.3]{doi:10.1080/07362999708809484}. 
\end{enumerate}	
In view of \cite[Remark 2.1]{doi:10.1080/07362999708809484}, the MP \((A, b, a)\) is well-posed under the assumptions of \cite[Theorems 2.1 and 2.3]{doi:10.1080/07362999708809484}.
Because for equicontinous families the topologies of pointwise and local uniform convergence coincide (see, e.g., \cite[Lemma 11.3.11]{singh2019introduction}), (H2) and (H3) imply (A7) and (A10). 
	In \cite[Remark 2.2]{doi:10.1080/07362999708809484} it was conjectured that the moment assumption in (H5) is not necessary and only required by the method of identifying the limit. Indeed, the martingale problem method does not need such an assumption and, consequently, in Corollary \ref{coro: uni} no moment condition on the initial laws is imposed. Moreover, instead of (H4), Corollary \ref{coro: uni} contains only the weaker local boundedness assumptions (A1) and (A9) or (A3) and (A9).
We formulate one last condition:
\begin{enumerate}
	\item[(A11)] The MP \((A, b, a, \eta)\) has at most one solution.
\end{enumerate}
In the following corollary we replace the well-posedness assumption in Corollary \ref{coro: uni} by a linear growth condition and a uniqueness assumption for the limiting MP.
\begin{corollary}\label{coro: existence diff}
	Let \(P^n \in \mathcal{M}(A, b^n, a^n,  \eta^n)\) for all \(n \in \mathbb{N}\) and suppose that one of the following  holds:
\begin{enumerate}
	\item[\textup{(i)}] \textup{(A2), (A5), (A7), (A9), (A10)} and \textup{(A11)} hold.
	\item[\textup{(ii)}] \textup{(A4), (A6), (A7), (A9), (A10)} and \textup{(A11)} hold.
\end{enumerate}
	If \(\eta^n \to \eta\) weakly as \(n \to \infty\), then there exists a unique solution \(P\) to the MP \((A, b, a, \eta)\) such that \(P^n \to P\) weakly as \(n \to \infty\).
\end{corollary}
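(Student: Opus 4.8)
The plan is to combine the tightness criterion of Proposition \ref{prop: tight} with the limit theorem, Theorem \ref{theo1}, which holds in the present continuous-path setting (see the comment at the end of Section \ref{sec: 5.1}). I shall verify conditions (M1), (M2) and (M3) together with tightness of $(P^n)_{n \in \mathbb{N}}$; once this is done, every accumulation point of $(P^n)_{n \in \mathbb{N}}$ solves the MP $(A, b, a, \eta)$ by Theorem \ref{theo1}, and since (A11) says this MP has at most one solution, it follows (arguing exactly as in the proof of Corollary \ref{coro: existence}) that it has a unique solution $P$ and that $P^n \to P$ weakly.

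Condition (M3) is precisely the hypothesis $\eta^n \to \eta$ weakly. For tightness: a weakly convergent sequence of laws on the Polish space $\mathbb{B}$ is tight by Prohorov's theorem, hence $(\eta^n)_{n \in \mathbb{N}}$ is tight, and then Proposition \ref{prop: tight} applies with $m = \infty$ (note $\tau_\infty = \infty$) --- in case (i) via part (iii), using (A2) $\Rightarrow$ (A1) and (A5); in case (ii) via part (iv), using (A4) $\Rightarrow$ (A3) and (A6) --- giving tightness of $(P^n)_{n \in \mathbb{N}}$. Throughout, (A9) will be used implicitly to guarantee that $(b, a)$ satisfy the standing assumptions on MP coefficients from Section \ref{2.1}, so that the MP $(A, b, a, \eta)$ is admissible.

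It remains to check (M1) and (M2). For $f = g(\langle \cdot, y^*\rangle) \in \mathcal{D}$ with $g \in C^2_c(\mathbb{R})$ and $y^* \in D(A^*)$,
\[
\mathcal{K}f(x) = \big(\langle x, A^* y^*\rangle + \langle b(x), y^*\rangle\big)\, g'(\langle x, y^*\rangle) + \tfrac12\, \langle a(x) y^*, y^*\rangle\, g''(\langle x, y^*\rangle),
\]
and since $x \mapsto \langle x, A^* y^*\rangle$ is continuous, $g'$ and $g''$ are continuous, and (A7) makes $x \mapsto \langle b(x), y^*\rangle$ and $x \mapsto \langle a(x) y^*, y^*\rangle$ continuous, the map $\mathcal{K}f$ is continuous; this is (M1). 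For (M2) the key observation is that the generator $A^n = A$ does not depend on $n$, so $(A^n)^* = A^*$, whence $\mathcal{B}^n = \mathcal{B} \supseteq \mathcal{D}$, and one may take $f^n := f$ for all $n$. Then $\sup_n \sup_{x \in \mathbb{B}} |f^n(x)| = \|g\|_\infty < \infty$, while for $\|x\| \leq m$
\[
|\mathcal{K}^n f(x)| \leq \big(m\,\|A^* y^*\| + \|b^n(x)\|\,\|y^*\|\big)\,\|g'\|_\infty + \tfrac12\, \|\sigma^n(x)\|_o^2\,\|y^*\|^2\,\|g''\|_\infty,
\]
so the boundedness condition \eqref{eq: bdd assp} follows from the uniform local boundedness of $(b^n)_{n \in \mathbb{N}}$ and $(\sigma^n)_{n \in \mathbb{N}}$ on the ball $\{\|x\| \leq m\}$, provided by (A1) in case (i) (recall $\|\cdot\|_o \leq \|\cdot\|_\textup{HS}$) and by (A3) in case (ii). Finally $f^n - f \equiv 0$, and by (A10)
\[
\big|\mathcal{K}^n f(x) - \mathcal{K}f(x)\big| \leq \big|\langle b^n(x) - b(x), y^*\rangle\big|\,\|g'\|_\infty + \tfrac12\, \big|\langle (a^n(x) - a(x))\, y^*, y^*\rangle\big|\,\|g''\|_\infty \longrightarrow 0
\]
uniformly on compact subsets of $\mathbb{B}$, because on such a set $\langle x, y^*\rangle$ remains in a compact interval. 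This establishes (M2), and the proof is finished as described above. The only genuinely delicate point is arranging \eqref{eq: bdd assp}: it is here that one must use the \emph{local} boundedness consequences (A1), resp.\ (A3), of the linear-growth hypotheses (A2), resp.\ (A4) --- the full linear growth itself being what Proposition \ref{prop: tight} needs to control the non-localized sequence.
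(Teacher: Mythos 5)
Your proof is correct and follows essentially the same route as the paper: the paper's proof says only ``This follows from Proposition~\ref{prop: tight} and Corollary~\ref{coro: existence}'', which is exactly your chain of reasoning --- tightness from Proposition~\ref{prop: tight} with $m=\infty$ under (A2)/(A5) or (A4)/(A6), then (M1)--(M3) from (A7), (A10), and the shared generator $A^n = A$, and finally Corollary~\ref{coro: existence}(ii) via (A11). Your explicit verification of (M1) and (M2) and the remark about (A9) ensuring the limit coefficients meet the standing assumptions of Section~\ref{2.1} simply make explicit what the paper leaves implicit.
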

\begin{proof}
This follows from Proposition \ref{prop: tight} and Corollary \ref{coro: existence}.
\end{proof}
\begin{remark}
	Condition (A11) holds for instance under local Lipschitz conditions.
	Another possibility to obtain conditions for uniqueness and well-posedness is to use Girsanov's theorem, see \cite{criens18} and Lemma \ref{lem: wp} below.
\end{remark}

\subsection{Examples}\label{sec:5.5}
In the following we present an example for an application of Corollary \ref{coro: diffusion Feller} and examples for cases where the assumptions (A5) and (A6) are satisfied.
\begin{example}\label{exp: heat}
	Assume that \(\mathbb{B} = \mathbb{H}\), that \(\sigma = \textup{Id}\), that (A6) holds and that for all \(x \in \mathbb{B}\)
	\[
	\|b(x)\| \leq \textup{const. } (1 + \|x\|).
	\] 
	Furthermore, assume that \(x \mapsto \la b(x), y\ra\) is continuous for all \(y \in \mathbb{B}\).
	The MP \((A, b, a)\) corresponds to the Cauchy problem
	\[
	\dd Y_t = (A Y_t + b(Y_t))\dd t + \dd W_t,
	\]
	where \(W\) is a cylindrical standard Brownian motion.
	In this case, we have the following:
	\begin{lemma}\label{lem: wp}
		The MP \((A, b, a)\) is well-posed.
	\end{lemma}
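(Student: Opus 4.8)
The plan is to establish well-posedness of the MP $(A, b, a)$ with $\sigma = \textup{Id}$ by combining an existence result with a uniqueness argument via Girsanov's theorem. For existence, I would first treat the linear equation $\dd Z_t = A Z_t \,\dd t + \dd W_t$ (the Ornstein--Uhlenbeck case $b \equiv 0$), whose solution is the well-known stochastic convolution $Z_t = S_t x + \int_0^t S_{t-s}\,\dd W_s$; under (A6) the operator $\int_0^\epsilon t^{-2\lambda}\|S_t\|^2_{\textup{HS}}\,\dd t < \infty$ with $\lambda > 0$ guarantees (by the factorization method, cf.\ \cite{doi:10.1080/17442508708833480}) that this convolution has a continuous $\mathbb{B}$-valued modification, so the MP $(A, 0, a, \varepsilon_x)$ has a solution $P^0_x$; uniqueness of this linear problem is classical. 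To pass from $b \equiv 0$ to the general drift, I would apply Girsanov's theorem: since $\sigma = \textup{Id}$ and $b$ has linear growth, the exponential local martingale built from $b$ is a true martingale (the linear growth plus the moment bounds available for $Z$ give the Novikov-type or linear-growth criterion of \cite{criens18}), and the measure change produces a solution $P_x$ to $(A, b, a, \varepsilon_x)$ from $P^0_x$.

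For uniqueness, I would run the same Girsanov argument in reverse. Given any solution $P$ to $(A, b, a, \varepsilon_x)$, the drift $b$ can be removed by an equivalent change of measure (again using $\sigma = \textup{Id}$, linear growth of $b$, and the criterion from \cite{criens18} that the Doléans-Dade exponential is a martingale), yielding a solution to the linear MP $(A, 0, a, \varepsilon_x)$; by uniqueness of the latter this image measure is $P^0_x$, and since the Radon--Nikodym density is explicit and depends only on the canonical path, $P$ is determined uniquely. This gives uniqueness for all degenerate initial laws, hence well-posedness in the sense of Definition~\ref{def: MP}.

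The main obstacle is verifying that the relevant exponential local martingales are genuine martingales in the infinite-dimensional cylindrical setting, so that Girsanov's theorem applies in both directions. This is where the linear growth hypothesis on $b$ and the Hilbert--Schmidt integrability (A6) of the semigroup enter decisively: (A6) furnishes the continuity of paths and the finiteness of $\mathbb{E}\int_0^T \|Z_s\|^2\,\dd s$ (and similar quadratic controls), which in turn feed into the linear-growth martingale criterion for the density process. I would isolate this verification as the key step, invoking the martingale criteria from \cite{criens18} and the Girsanov-type theorem for cylindrical MPs stated there; the remaining steps (existence and uniqueness for the linear problem, and the bookkeeping of the measure change on the Skorokhod/path space) are routine given the tools already assembled in the paper and its references.
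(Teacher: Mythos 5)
Your proposal is correct in spirit but takes a genuinely different route for the existence half. The paper obtains existence of both $(A,0,a,\varepsilon_x)$ and $(A,b,a,\varepsilon_x)$ directly from the compactness/Skorokhod-type existence theorem of Gatarek--Goldys (Theorem~2 of \cite{doi:10.1080/17442509408833868}), which applies here because (A6)$\Rightarrow$(A5), $b$ has linear growth, and $\sigma=\textup{Id}$; it then transfers uniqueness from the Ornstein--Uhlenbeck problem (which has a unique law) to $(A,b,a,\varepsilon_x)$ by citing the Girsanov-type local-uniqueness result \cite[Proposition~3.7]{criens18}. You instead propose to build the solution to $(A,b,a,\varepsilon_x)$ from the OU solution via a change of measure, and to reverse that change of measure for uniqueness. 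This works, but it is the more demanding route: for existence you must show that the Dol\'eans--Dade exponential is a \emph{true} martingale under the OU law, and for uniqueness you must show the reciprocal exponential is a true martingale under an arbitrary (a priori uncontrolled) solution $P$. The paper's citation to Proposition~3.7 of \cite{criens18} is a local-uniqueness transfer in the sense of Jacod--M\'emin/Stroock--Varadhan, which argues on stopped intervals $[0,\tau_n]$ where the coefficients are bounded and so avoids any global integrability condition on the density; your reverse-Girsanov step would either have to reproduce that localization argument or establish moment bounds for the unknown $P$ to invoke a Bene\v{s}/Novikov criterion. Put differently, the paper uses Girsanov only once, in its localized form, whereas you use it twice in its global form and thereby take on the true-martingale verification as the central burden --- a burden you correctly identify as the key step, and which is resolvable via the linear-growth criteria in \cite{criens18} together with the Gaussian tail bounds of the OU process provided by (A6), but which the paper's decomposition elegantly sidesteps for the existence half.
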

	\begin{proof}
		Let \(x \in \mathbb{B}\). The MPs \((A, b, a, \varepsilon_x)\) and \((A, 0, a, \varepsilon_x)\) have solutions due to \cite[Theorem 2]{doi:10.1080/17442509408833868}.
		Because Ornstein-Uhlenbeck processes have a unique law, the MP \((A, 0, a, \varepsilon_x)\) satisfies uniqueness. Now, \cite[Proposition 3.7]{criens18} yields that also the MP \((A, b, a, \varepsilon_x)\) satisfies uniqueness. 
	\end{proof}
	The previous lemma can be compared to \cite[Theorem 13]{daprato2013}, where a similar result is shown. 
	Let \(P_x\) be the unique solution to the MP \((A, b, a, \varepsilon_x)\).
	The following is a consequence of Corollary \ref{coro: diffusion Feller} and Lemma \ref{lem: wp}.
	\begin{corollary}
		The family \(\{P_x, x \in \mathbb{B}\}\) is Feller.
	\end{corollary}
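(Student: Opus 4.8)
The plan is simply to check the hypotheses of Corollary \ref{coro: diffusion Feller}(ii) and then quote it. First, Lemma \ref{lem: wp} already establishes that the MP \((A, b, a)\) is well-posed, so for every \(x \in \mathbb{B}\) there is a unique solution \(P_x\); this is precisely the existence-and-uniqueness input that Corollary \ref{coro: diffusion Feller} requires. It therefore remains only to verify conditions (A6), (A7) and (A9) for the coefficients \(b\) and \(a = \sigma \sigma^* = \textup{Id}\).

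Each of these is immediate. Condition (A6) is assumed outright in the statement of Example \ref{exp: heat}. For (A7), fix \(y^* \in D(A^*)\): since \(\sigma = \textup{Id}\) we have \(\langle a(x) y^*, y^*\rangle = \|y^*\|^2\), which is constant in \(x\) and hence continuous, while \(x \mapsto \langle b(x), y^*\rangle\) is continuous by the standing hypothesis of the example. For (A9), let \(G \subset \mathbb{B}\) be bounded; then \(\sup_{x \in G}\|\sigma(x)\|_o = \|\textup{Id}\|_o = 1\), and the linear growth bound \(\|b(x)\| \leq \textup{const.}\,(1 + \|x\|)\) gives \(\sup_{x \in G}\|b(x)\| < \infty\). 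Invoking Corollary \ref{coro: diffusion Feller}(ii) then yields that \(\{P_x, x\in\mathbb{B}\}\) is Feller.

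The one point worth flagging is that one must use branch (ii) of Corollary \ref{coro: diffusion Feller} rather than branch (i): in infinite dimensions \(\sigma = \textup{Id}\) is \emph{not} Hilbert--Schmidt, so (A8) fails, and for the same reason one cannot route through the implication (A6) \(\Rightarrow\) (A5) of Lemma \ref{lem: for all conv} to land in branch (i). The correct viewpoint is that the operator-norm local boundedness (A9) is trivially satisfied by \(\sigma = \textup{Id}\), while the Hilbert--Schmidt regularity needed to make the underlying tightness argument run is supplied entirely by the semigroup via the integrability condition \eqref{eq: convSG} encoded in (A6). Beyond this choice of hypotheses there is no genuine obstacle; the proof is a one-line deduction from the corollary once the bookkeeping is in place.
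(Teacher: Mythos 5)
Your proof is correct and is exactly the route the paper intends: the paper simply asserts that the corollary follows from Corollary \ref{coro: diffusion Feller} and Lemma \ref{lem: wp}, and you have filled in precisely the hypothesis-checking that justifies that assertion. Your explicit observation that one must invoke branch (ii) --- because $\sigma = \mathrm{Id}$ is not Hilbert--Schmidt in infinite dimensions, so (A8) fails and only the operator-norm condition (A9) together with (A6) is available --- is an accurate and worthwhile clarification that the paper leaves implicit.
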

	In \cite{Maslowski1999} it is shown that for all \(t > 0\) and all bounded weakly sequentially continuous functions \(f \colon \mathbb{B} \to \mathbb{R}\) the map \(x \mapsto E_x [f(X_t)]\) is weakly sequentially continuous, too. The proof is based on the observation that the sequential Feller property is preserved by Girsanov's theorem.
\end{example}
\begin{example}\label{ex:2}
	Let \(\mathcal{O}\) be a bounded domain in \(\mathbb{R}^d\) with smooth boundary and \(m \in \mathbb{N}\). We take \(\mathbb{B} \triangleq L^2(\mathcal{O})\). For any multiindex \(\alpha\) with \(|\alpha| \leq 2m\) let \(\gamma_\alpha \colon \textup{cl}_{\mathbb{R}^d} (\mathcal{O}) \to \mathbb{R}\) be a sufficiently smooth function.
	We define the differential operator 
	\[
	A f (x) \triangleq \sum_{|\alpha| \leq 2m} \gamma_\alpha (x) (\partial^\alpha f) (x), \quad x \in \mathcal{O},
	\]
	for 
	\[
	f \in D(A) \triangleq H^{2m} (\mathcal{O}) \cap H^m_0(\mathcal{O}).
	\]
	Moreover, we assume that there exists a constant \(C > 0\) such that
	\[
	(- 1)^{m + 1} \sum_{|\alpha| = 2m} \gamma_\alpha (x) \xi^\alpha   \geq C |\xi|^{2m}
	\]
	for all \(x \in \mathcal{O}\) and \(\xi \in \mathbb{R}^d.\)
	Due to \cite[Theorem 2.7, p. 211]{pazy2012semigroups}, the operator \(A\) generates an analytic semigroup on \(\mathbb{B}\).
	\begin{enumerate}
	\item[(i)] In case \(m =  1\) the resolvent of \(A\) is compact, see \cite[Remark A.28]{DePrato}, and \(A\) generates a compact \(C_0\)-semigroup, i.e. (A5) holds.
	\item[(ii)] In case \(2m > d\) it has been noted in \cite[Example 3]{doi:10.1080/17442509408833868} that there exists a \(\rho \in (0, \frac{1}{2})\) such that \((- A)^{-\rho}\) is Hilbert Schmidt and, due to \cite[Remark 3]{doi:10.1080/17442509408833868}, this implies that (A6) holds.
	\end{enumerate}
In particular, if \(d = 1\) and \(A\) is the Laplacian, then (A6) holds.
\end{example}
\section{Proofs}\label{sec: Proofs}
\subsection{Proof of Theorem \ref{theo1}}\label{proof: theo1}
For \(\alpha \in \Omega\) we introduce the following sets:
\begin{align*}
V(\alpha) &\triangleq \left\{t >0 \colon \tau_{t} (\alpha) < \tau_{t+}(\alpha) \right\},\\
V'(\alpha) &\triangleq \left\{ t > 0 \colon \alpha (\tau_t (\alpha)) \not = \alpha(\tau_t(\alpha)-) \text{ and } \|\alpha(\tau_t(\alpha) - )\| = t\right\}.
\end{align*}
We stress that \(\tau_{t+}\) is well-defined, because \(t \mapsto \tau_t\) is increasing.
Due to \cite[Problem 13, p. 151]{EK} and \cite[Propositions VI.2.11]{JS}, the map \(\alpha \mapsto \tau_t(\alpha)\) is continuous at each point \(\alpha\) such that \(t \not \in V(\alpha)\). Furthermore, using \cite[Theorem 3.6.3, Remark 3.6.4]{EK} instead of \cite[Theorem VI.1.14 b)]{JS}, we can argue as in the proof of \cite[Proposition VI.2.12]{JS} and obtain that the map \(\alpha \mapsto\alpha (\cdot \wedge \tau_t(\alpha))\) is continuous at each point \(\alpha\) such that \(t \not \in V(\alpha) \cup V'(\alpha)\). 
It follows as in the proof of \cite[Proposition IX.1.17]{JS} that the set
\[
\left\{t \geq 0 \colon P \big(\big\{ \alpha \in \Omega \colon t \in V (\alpha) \cup V'(\alpha)\big\}\big) > 0 \right\}
\]
is at most countable. Therefore, we can choose \(\lambda_m \in [m-1, m]\) such that 
\begin{align}\label{lambda_m}
P\big(\big\{\alpha \in \Omega \colon \lambda_m \in V (\alpha) \cup V'(\alpha)\big\}\big) = 0.
\end{align}
We summarize:
\begin{align}\label{eq: SK con}
\alpha \mapsto \tau_{\lambda_m}(\alpha), X_{\cdot \wedge \tau_{\lambda_{m}}(\alpha)}(\alpha) \text{ are continuous up to a } P\text{-null set.} 
\end{align}
Next, we show that the process \(M^f_{\cdot \wedge \tau_{\lambda_{m}}}\) is a \(P\)-martingale for all \(f \in \mathcal{D}\). Fix an \(f \in \mathcal{D}\) and let \((f^n)_{n \in \mathbb{N}}\) be the corresponding sequence as in (M2).
Define \(M^{f^n}\) as in \eqref{f - K} with \(f\) replaced by \(f^n\) and \(\mathcal{K} f\) replaced by \(\mathcal{K}^n f^n\). Due to Proposition \ref{prop: A1} in Appendix \ref{app: 1}, the process \(M^{f^n}_{\cdot \wedge \tau_{\lambda_{m}}}\) is a \(P^{n}\)-martingale. 
We claim the following: There exists a dense set \(D \subset \mathbb{R}_+\) such that the following hold:
\begin{enumerate}
	\item[\textup{(a)}] For any bounded sequence \((t_n)_{n \in \mathbb{N}}\subset D\) and any \(t \in D\) we have \[\sup_{n \in \mathbb{N}} \sup_{\alpha \in \Omega}\big|M^f_{t_n \wedge \tau_{\lambda_{m}} (\alpha)}(\alpha)\big| + \sup_{n \in \mathbb{N}} \sup_{\alpha \in \Omega} \big|M^{f^n}_{t \wedge \tau_{\lambda_m (\alpha)}}(\alpha)| < \infty.\] 
	\item[\textup{(b)}] For all \(t\in D\) the map \(\alpha \mapsto M^f_{t \wedge \tau_{\lambda_{m}}(\alpha)} (\alpha)\) is continuous up to a \(P\)-null set.
	\item[\textup{(c)}] For all \(t\in D\) and all compact sets \(K \subset \Omega\) we have 
	\begin{align*}
	\sup_{\alpha \in K} \left|M^{f^n}_{t \wedge \tau_{\lambda_{m}} (\alpha)} (\alpha) - M^f_{t \wedge \tau_{\lambda_{m}(\alpha)}} (\alpha)\right| \to 0
	\end{align*}
	as \(n \to \infty\).
\end{enumerate}
Before we check these properties, we show that they imply that the process \(M^f\) is a local \(P\)-martingale.
Let \(t \in D\) and \(k \colon \Omega \to \mathbb{R}\) be bounded and continuous. In this case, (a) and (b) imply that the map
\[\alpha \mapsto k(\alpha) M^f_{t \wedge \tau_{\lambda_{m}}(\alpha)} (\alpha)\] is bounded and continuous up to a \(P\)-null set.
Therefore, by the continuous mapping theorem, \(P^n \to P\) weakly as \(n \to \infty\) yields that 
\[
E^{P^n} \left[k M^f_{t \wedge \tau_{\lambda_{m}}}\right] \to E^{P} \left[k M^f_{t \wedge \tau_{\lambda_{m}}}\right]
\]
as \(n \to \infty\). 
Fix \(\varepsilon > 0\) and denote 
\[
c \triangleq \max \Big( \sup_{\alpha \in \Omega} \big|k(\alpha)\big|,\ \sup_{\alpha \in \Omega} \big| k(\alpha) M^f_{t \wedge \tau_{\lambda_m}(\alpha)}(\alpha)\big|,\ \sup_{n \in \mathbb{N}} \sup_{\alpha \in \Omega} \big| k(\alpha) M^{f^n}_{t \wedge \tau_{\lambda_m}(\alpha)}(\alpha)\big|,\ 1\Big).
\]
Note that \(1 \leq c < \infty\) by (a).
 Because \((P^n)_{n \in \mathbb{N}}\) is tight, we find a compact set \(K \subset \Omega\) such that 
 \[
 \sup_{n \in \mathbb{N}} P^n(K^c) \leq \frac{\varepsilon}{4 c}.
 \]
 Using (c) we find an \(N \in \mathbb{N}\) such that for all \(n \geq N\)
 \begin{align*}
 E^{P^n} \left[ \left|kM^{f^n}_{t \wedge \tau_{\lambda_{m}}} - k M^{f}_{t \wedge \tau_{\lambda_{m}}}\right| \right] &\leq 2c \sup_{m \in \mathbb{N}} P^m (K^c) + c \sup_{\alpha \in K} \big| M^{f^n}_{t \wedge \tau_{\lambda_{m}}(\alpha)}(\alpha) - M^{f}_{t \wedge \lambda_{m}(\alpha)}(\alpha)\big|
 \\&\leq \frac{\varepsilon}{2} + \frac{\varepsilon}{2} = \varepsilon.
 \end{align*}
Therefore, using the triangle inequality, we obtain that
\begin{align*}
E^{P^n}\left[ kM^{f^n}_{t \wedge \tau_{\lambda_{m}}}\right] \to  E^P \left[k M^{f}_{t \wedge \tau_{\lambda_{m}}}\right] 
\end{align*}
as \(n \to \infty\).

Take \(s, t \in \mathbb{R}_+\) with \(s < t\). Since \(D\) is dense in \(\mathbb{R}_+\) we find two sequences \((s_n)_{n \in \mathbb{N}}, (t_n)_{n \in \mathbb{N}}\subset D\) with \(s_n \searrow s\) and \(t_n \searrow t\) as \(n \to \infty\).
For any bounded, continuous and \(\mathscr{F}_s\)-measurable function \(k \colon \Omega \to \mathbb{R}\) we have 
\begin{align*}
E^{P} \left[ k \left(M^{f}_{t \wedge \tau_{\lambda_{m}}} - M^f_{s \wedge \tau_{\lambda_{m}}}\right)\right] &=\lim_{i \to \infty} E^{P} \left[ k \left(M^{f}_{t_i \wedge \tau_{\lambda_{m}}} - M^f_{s_i \wedge \tau_{\lambda_{m}}}\right)\right]
\\&= \lim_{i \to \infty} \lim_{n \to \infty} E^{P^n} \left[ k \left(M^{f^n}_{t_i \wedge \tau_{\lambda_{m}}} - M^{f^n}_{s_i \wedge \tau_{\lambda_{m}}}\right)\right] 
\\&= \lim_{n \to \infty} E^{P^n} \left[ k \left(M^{f^n}_{s \wedge \tau_{\lambda_{m}}} - M^{f^n}_{s \wedge \tau_{\lambda_{m}}}\right)\right] = 0, 
\end{align*}
by the dominated convergence theorem, which we can apply due to (a), the right-continuity of \(M^f_{\cdot \wedge \tau_{\lambda_{m}}}\) and the \(P^n\)-martingale property of \(M^{f^n}_{\cdot \wedge \tau_{\lambda_{m}}}\). We claim that this already proves that \(M^f_{\cdot \wedge \tau_{\lambda_{m}}}\) is a \(P\)-martingale. 

Take \(g \in C_b(\mathbb{B})\) and let \((m_k)_{k \in \mathbb{N}} \subset (0, \infty)\) be such that \(m_k \searrow 0\) as \(k \to \infty\). We set
\[
g^k (q) \triangleq \frac{1}{m_k} \int_{q}^{q+ m_k} g(X_r)\dd r,\quad k \in \mathbb{N}, q \in \mathbb{R}_+,
\]
and note that \(g^k (q) \colon \Omega \to \mathbb{R}\) is continuous, bounded and \(\mathcal{F}_{q + m
	_k}\)-measurable and that
\(g^k(q) \to g(X_q)\) as \(k \to \infty\). Thus, using an approximation argument, we can deduce from the fact that \(E^{P} \big[ k M^f_{t \wedge \tau_{\lambda_m}} \big] = E^{P} \big[ k M^f_{s \wedge \tau_{\lambda_m}}  \big]\) holds for all \(s < t\) and all continuous, bounded and \(\mathcal{F}_s\)-measurable \(k\) that
\[
E^{P} \Big[ M^f_{t \wedge \tau_{\lambda_m}} \prod_{i = 1}^l g_i(X_{q_i}) \Big]
=  E^{P} \Big[ M^{f}_{s \wedge \tau_{\lambda_m}} \prod_{i = 1}^l g_i(X_{q_i}) \Big],
\]
for all \(s < t\), \(l \in \mathbb{N}\), \(g_1, \dots, g_l\in C_b(\mathbb{B})\) and \(q_1, \dots, q_l \in [0, s]\).
Using a monotone class argument shows that \(M^f_{\cdot \wedge \tau_{\lambda_m}}\) is a \(P\)-martingale.

Since \(\lambda_m \in [m-1, m]\), we have \(\lambda_m \nearrow \infty\) as \(m \to \infty\) and therefore also \(\tau_{\lambda_{m}} (\alpha) \nearrow \infty\) as \(m \to \infty\) for all \(\alpha \in \Omega\). In other words, \(M^f\) is a local \(P\)-martingale. 

Due to Proposition \ref{prop: A2} in Appendix \ref{app: 1}, solutions to MPs are determined by the test functions in \(\mathcal{D}\).
Thus, to conclude that \(P \in \mathcal{M}(A, b, a, K, \eta)\) it remains to show that \(P \circ X_0^{-1} = \eta\). Because \(\alpha \mapsto \alpha(0)\) is continuous, we have
\[
\eta^n = P^n \circ X_0^{-1} \to P \circ X_0^{-1}
\]
weakly as \(n \to \infty\). The uniqueness of weak limits and (M3) yield the identity \(P \circ X_0^{-1} = \eta\).
Consequently, \(P \in \mathcal{M}(A, b, a, K, \eta)\) and the theorem is proven.

It remains to check (a) -- (c). The finiteness of the first term in (a) follows from Proposition \ref{prop: A1} in Appendix \ref{app: 1}. The second term is finite due to \eqref{eq: bdd assp} in (M2).

Next, we check (b). 
Set 
\[
D \triangleq \left\{t \geq 0 \colon P\big(X_{t \wedge \tau_{\lambda_{m}}} \not = X_{(t \wedge \tau_{\lambda_{m}})-}\big) = 0\right\}.
\]
By \cite[Lemma 3.7.7]{EK}, the complement of \(D\) in \(\mathbb{R}_+\) is countable. Thus, \(D\) is dense in \(\mathbb{R}_+\). For each \(t \in D\) set 
\[
U_t \triangleq  \big\{\alpha \in \Omega \colon \alpha (t \wedge \tau_{\lambda_{m}}(\alpha)) \not = \alpha((t \wedge \tau_{\lambda_{m}}(\alpha))-) \big\},
\]
which is a \(P\)-null set by the definition of \(D\).
Let \(N \in \mathscr{F}\) be a \(P\)-null set such that the maps \(\alpha \mapsto \tau_{\lambda_{m}} (\alpha), X_{\cdot \wedge \tau_{\lambda_{m} (\alpha)}}(\alpha)\) are continuous at all \(\alpha \not \in N\), see \eqref{eq: SK con}. Take \(t \in D\) and \(\alpha \not \in N \cup U_t\). Recalling \cite[VI.2.3]{JS}, we see that the functions \(\omega \mapsto \omega(t \wedge \tau_{\lambda_{m}} (\omega))\) and \(\omega \mapsto \omega(0)\) are continuous at \(\alpha\). Thus, \(M^f_{t \wedge \tau_{\lambda_{m}}}\) is continuous at \(\alpha\) if the map
\[
\omega \mapsto I_{t \wedge \tau_{\lambda_m}(\omega)} (\omega) \triangleq \int_0^{t \wedge \tau_{\lambda_{m}}(\omega)} \mathcal{K} f(\omega(s-))\dd s
\]
is continuous at \(\alpha\). 
Because the set of all \(a > 0\) such that \(\tau_a\) is not continuous at \(\alpha\) is at most countable (see \cite[Lemma VI.2.10, Proposition VI.2.11]{JS}) and \(\tau_a (\alpha) \nearrow \infty\) as \(a \to \infty\), we find a \(\hat{\lambda}_m < \infty\) such that \(\lambda_m \leq \hat{\lambda}_m\), \(\tau_{\hat{\lambda}_m}\) is continuous at \(\alpha\) and \(\tau_{\hat{\lambda}_m} (\alpha) > t\). Let \((\alpha_k)_{k \in \mathbb{N}} \subset \Omega\) be such that \(\alpha_k \to \alpha\) as \(k \to \infty\). Due to the continuity of \(\tau_{\hat{\lambda}_m}\) at \(\alpha\), there exists an \(N \in \mathbb{N}\) such that \(\tau_{\hat{\lambda}_m} (\alpha_k) > t\) for all \(k\geq N\).
Due to Proposition \ref{prop: A1} in Appendix \ref{app: 1}, we have \(C \triangleq \sup_{\|x\| \leq \hat{\lambda}_m} |\mathcal{K} f (x)|< \infty\).
Now, for all \(k \geq N\) we have
\begin{align*}
\big| I_{t \wedge \tau_{\lambda_m}(\alpha)} (\alpha) &- I_{t \wedge \tau_{\lambda_m}(\alpha_k)} (\alpha_k)\big| \\&\leq \big| I_{t \wedge \tau_{\lambda_m}(\alpha)} (\alpha) - I_{t \wedge \tau_{\lambda_m}(\alpha)} (\alpha_k)\big| + \big| I_{t \wedge \tau_{\lambda_m}(\alpha)} (\alpha_k) - I_{t \wedge \tau_{\lambda_m}(\alpha_k)} (\alpha_k)\big|
\\&\leq \big| I_{t \wedge \tau_{\lambda_m}(\alpha)} (\alpha) - I_{t \wedge \tau_{\lambda_m}(\alpha)} (\alpha_k)\big| + C \big|t \wedge \tau_{\lambda_m}(\alpha) - t \wedge \tau_{\lambda_m}(\alpha_k)\big|.
\end{align*}
Because for all \(k \geq N\) and all \(s \leq t \wedge \tau_{\lambda_m} (\alpha) < \tau_{\hat{\lambda}_m}(\alpha) \wedge \tau_{\hat{\lambda}_m}(\alpha_k)\) we have 
\[
\big| \mathcal{K} f (\alpha(s-)) - \mathcal{K} f(\alpha_k(s-))\big| \leq 2 C, 
\]
the first term goes to zero as \(k \to \infty\) by (M1), the dominated convergence theorem and \cite[VI.2.3, Lemma VI.3.12]{JS}. The second term goes to zero as \(k \to \infty\) by the continuity of \(\tau_{\lambda_{m}}\) at \(\alpha\). 
We conclude that (b) holds.

Finally, we check (c). Due to \cite[Problem 16, p. 152]{EK}, for each compact set \(K \subset \Omega\) and each \(t \geq 0\) there exists a compact set \(K_t \subset \mathbb{B}\) such that \(\alpha (s) \in K_t\) for all \(\alpha \in K\) and \(s \in [0, t]\). Thus, (M2) implies that
\begin{align*}
\sup_{\alpha \in K} \Big| M^{f^n}_{t \wedge \tau_{\lambda_{m}}(\alpha)} &(\alpha)- M^f_{t \wedge \tau_{\lambda_{m}}(\alpha)} (\alpha)\Big|  
\\&\leq 2 \sup_{x \in K_t} \big|f^n(x) - f(x)\big| +  t \sup_{x \in K_t}\big| \mathcal{K}^nf^n (x) - \mathcal{K} f(x)\big| \to 0
\end{align*} as \(n \to \infty\).
Therefore, (c) holds and the proof is complete.
\qed
\subsection{Proof of Theorem \ref{prop: loc}}\label{proof: theo2}
	Proposition \ref{prop: A3} in Appendix \ref{app: 1} yields that the map \(x \mapsto P_x\) is Borel and that \(P \triangleq \int P_x \eta(\dd x)\) is the unique solution to MP \((A, b, a, K, \eta)\). We show that \(P^n \to P\) weakly as \(n \to \infty\). It is well-known that \(P^n \to P\) weakly as \(n \to \infty\) if and only if each subsequence of \((P^n)_{n \in \mathbb{N}}\) has a further subsequence which converges weakly to \(P\), see, e.g., \cite[Theorem 2.6]{bil99}. If we show that \((P^n)_{n \in \mathbb{N}}\) is tight, Prohorov's theorem yields that any subsequence of \((P^n)_{n \in \mathbb{N}}\) has a weakly convergent subsequence, and Theorem \ref{theo1}, together with the uniqueness of \(P\), yields that this subsequence converges weakly to \(P\). In summary, it suffices to prove that \((P^n)_{n \in \mathbb{N}}\) is tight.
	
We define the following modulus of continuity:
\begin{align*}
w'(\alpha, \theta, N) &\triangleq \inf_{\{t_i\}}  \max_{i} \sup_{s, t \in [t_{i-1}, t_i)} \|\alpha(s) - \alpha(t)\|,
\end{align*}
where \(\{t_i\}\) ranges over all partitions of the form \(0 = t_0 < t_1 < \dots < t_{n -1} < t_n \leq N\) with \(\min_{1 \leq i < n}(t_i - t_{i-1}) \geq \theta\) and \(n \geq 1\).
Now, recall the following fact (see \cite[Corollary 3.7.4]{EK}):
\begin{fact}\label{fact}
	A sequence \((\mu^n)_{n \in \mathbb{N}}\) of probability measures on \((\Omega, \mathscr{F})\) is tight if and only if the following hold:
	\begin{enumerate}
		\item[\textup{(a)}]
		For all \(t \in \mathbb{Q}_+\) and \(\epsilon > 0\) there exists a compact set \(C(t, \epsilon) \subset \mathbb{B}\) such that
		\[
		\limsup_{n \to \infty} \mu^n(X_t \not \in C(t, \epsilon))\leq \epsilon. \]
		\item[\textup{(b)}]
		For all \(\epsilon > 0\) and \(t > 0\) there exists a \(\delta > 0\) such that 
		\[
		\limsup_{n \to \infty} \mu^n(w'(X, \delta, t) \geq \epsilon) \leq \epsilon.
		\]
	\end{enumerate}
\end{fact}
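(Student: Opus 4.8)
Since we are in the diffusion setting, $\Omega = C(\mathbb{R}_+, \mathbb{B})$ with $\mathbb{B}$ a separable (hence Polish) Hilbert space, so the statement is a probabilistic reformulation of the Arzel\`a--Ascoli characterization of relatively compact subsets of $\Omega$, and the natural route is through Prohorov's theorem. Recall that a set $\Gamma \subset \Omega$ has compact closure if and only if \textup{(i)} for every $t$ in some dense subset of $\mathbb{R}_+$ the set $\{\alpha(t) : \alpha \in \Gamma\}$ is relatively compact in $\mathbb{B}$, and \textup{(ii)} $\Gamma$ is uniformly equicontinuous on every compact time interval, i.e.\ $\sup_{\alpha \in \Gamma} w(\alpha, \delta, N) \to 0$ as $\delta \downarrow 0$ for every $N$, where $w(\alpha, \delta, N) \triangleq \sup\{\|\alpha(s) - \alpha(t)\| : s, t \in [0, N],\, |s - t| \le \delta\}$; under \textup{(ii)}, condition \textup{(i)} then in fact holds for \emph{every} $t$ by density. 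The $w'$-modulus in the statement is tied to $w$ by the elementary bounds $w'(\alpha, \delta, N) \le w(\alpha, 2\delta, N)$ and, for continuous $\alpha$, $w(\alpha, \delta, N) \le 2\,w'(\alpha, \delta, N)$ (cf.\ \cite[Chapter~3]{EK}), so \textup{(ii)} may equivalently be phrased with $w'$ in place of $w$, which is what makes \textup{(a)} and \textup{(b)} the right conditions.

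\textbf{Necessity.} If $(\mu^n)_{n \in \mathbb{N}}$ is tight, then for $\epsilon > 0$ Prohorov's theorem provides a compact $K_\epsilon \subset \Omega$ with $\sup_n \mu^n(\Omega \setminus K_\epsilon) \le \epsilon$. Since $K_\epsilon$ is compact, it satisfies \textup{(i)} and \textup{(ii)} above with $\Gamma = K_\epsilon$. From \textup{(i)} the closure $C(t, \epsilon) \triangleq \overline{\{\alpha(t) : \alpha \in K_\epsilon\}} \subset \mathbb{B}$ is compact and $\mu^n(X_t \notin C(t, \epsilon)) \le \mu^n(\Omega \setminus K_\epsilon) \le \epsilon$ for all $n$, which is \textup{(a)}; from \textup{(ii)}, for each $t > 0$ there is $\delta > 0$ with $\sup_{\alpha \in K_\epsilon} w'(\alpha, \delta, t) < \epsilon$, hence $\mu^n(w'(X, \delta, t) \ge \epsilon) \le \mu^n(\Omega \setminus K_\epsilon) \le \epsilon$ for all $n$, which is \textup{(b)}.

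\textbf{Sufficiency.} Assume \textup{(a)} and \textup{(b)} and fix $\eta > 0$; the goal is a compact $K \subset \Omega$ with $\sup_n \mu^n(\Omega \setminus K) \le \eta$. Enumerate $\mathbb{Q}_+ = \{q_1, q_2, \dots\}$. For each $i$, \textup{(a)} yields a compact subset of $\mathbb{B}$ whose complement carries $\mu^n$-mass asymptotically at most $\eta 2^{-i-2}$; after replacing each $\limsup$-bound by a bound uniform in $n$ (all but finitely many $n$ already comply, and the remaining finitely many $\mu^n \circ X_{q_i}^{-1}$ are tight by Ulam's theorem, so we enlarge the set accordingly), we obtain a compact $C_i \subset \mathbb{B}$ with $\sup_n \mu^n(X_{q_i} \notin C_i) \le \eta 2^{-i-1}$. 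Likewise, from \textup{(b)} applied with $(t, \epsilon) = (j, 1/\ell)$ and the same upgrading step we obtain $\delta_{j,\ell} > 0$ with $\sup_n \mu^n(w'(X, \delta_{j,\ell}, j) > 1/\ell) \le \eta 2^{-j-\ell-1}$. Put
\[
A \triangleq \Big\{\alpha \in \Omega : \alpha(q_i) \in C_i \ \forall\, i,\ \ w'(\alpha, \delta_{j,\ell}, j) \le 1/\ell \ \forall\, j, \ell\Big\}, \qquad K \triangleq \overline{A}.
\]
Then $\{\alpha(q_i) : \alpha \in A\} \subset C_i$ is relatively compact for every $i$ (condition \textup{(i)} on the dense time set $\mathbb{Q}_+$), and for each $N$ the bounds $\sup_{\alpha \in A} w'(\alpha, \delta_{N,\ell}, N) \le 1/\ell$ combined with $w \le 2\,w'$ force $\sup_{\alpha \in A} w(\alpha, \delta, N) \to 0$ as $\delta \downarrow 0$ (condition \textup{(ii)}); hence $K = \overline{A}$ is compact. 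Finally $\Omega \setminus K \subset \Omega \setminus A \subset \bigcup_i \{X_{q_i} \notin C_i\} \cup \bigcup_{j, \ell} \{w'(X, \delta_{j,\ell}, j) > 1/\ell\}$, so $\sup_n \mu^n(\Omega \setminus K) \le \sum_i \eta 2^{-i-1} + \sum_{j,\ell} \eta 2^{-j-\ell-1} \le \eta$, and tightness follows from Prohorov's theorem.

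\textbf{Main obstacle.} The two invocations of Prohorov and the Arzel\`a--Ascoli dictionary are routine; the only genuinely delicate point is in the sufficiency direction, namely commuting the $\limsup_n$ appearing in \textup{(a)}--\textup{(b)} with the countable intersection used to build $K$. This is handled by upgrading each asymptotic estimate to one uniform in $n$ before summing the geometric series, using that finitely many Borel probability measures on the Polish space $\Omega$ (resp.\ $\mathbb{B}$) are uniformly tight; one must also check that the constraints on $w'$ really deliver equicontinuity in the ordinary sense, which is precisely the role of the inequality $w(\alpha, \delta, N) \le 2\,w'(\alpha, \delta, N)$ for continuous paths.
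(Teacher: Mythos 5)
The paper offers no argument of its own here: it simply cites \cite[Corollary 3.7.4]{EK}. Your proof skeleton (Prohorov in both directions, upgrading the \(\limsup\)-bounds in (a)--(b) to bounds uniform in \(n\) by handling the finitely many exceptional measures with Ulam's theorem, then a union bound over a countable family of constraints defining a relatively compact set) is exactly the standard proof of that corollary. But there is a genuine gap in the setting you work in: you declare ``we are in the diffusion setting, \(\Omega=C(\mathbb{R}_+,\mathbb{B})\)'', whereas the Fact is stated and used inside the proof of Theorem \ref{prop: loc}, i.e.\ on the Skorokhod space of \cadlag paths from Section \ref{2.1}, equipped with the Skorokhod topology (that proof explicitly uses closedness in the Skorokhod topology, and \(w'\) there is the \cadlag modulus). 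On that space both of your key deterministic ingredients fail as stated: the Arzel\`a--Ascoli characterization through the ordinary modulus \(w\), and the inequality \(w(\alpha,\delta,N)\le 2\,w'(\alpha,\delta,N)\), which is valid only for continuous \(\alpha\). For a \cadlag path with a single jump of size \(1\) one has \(w'(\alpha,\delta,N)=0\) for small \(\delta\) while \(w(\alpha,\delta,N)\ge 1\); hence the \(w'\)-constraints defining your set \(A\) do not yield equicontinuity, \(\overline{A}\) need not be compact for the local uniform topology, and in any case compactness has to be established for the Skorokhod topology. The necessity direction has the same defect: a compact subset of the Skorokhod space is not equicontinuous in the \(w\)-sense, so it does not satisfy your conditions (i)--(ii); it satisfies the \(D\)-space analogue instead.

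The repair is to replace the Arzel\`a--Ascoli dictionary by the characterization of relatively compact subsets of \(D_{\mathbb{B}}[0,\infty)\) in \cite[Theorem 3.6.3]{EK}: a set \(A\) has compact closure in the Skorokhod topology if and only if \(\{\alpha(t)\colon\alpha\in A\}\) is contained in a compact subset of \(\mathbb{B}\) for all \(t\) in a dense subset of \(\mathbb{R}_+\) and \(\sup_{\alpha\in A}w'(\alpha,\delta,N)\to 0\) as \(\delta\downarrow 0\) for every \(N\) --- which is precisely the form of the constraints you impose on \(A\). With that substitution, both your necessity argument and your sufficiency construction (including the Ulam upgrading step and the geometric-series union bound) go through essentially verbatim, and what you obtain is exactly \cite[Corollary 3.7.4]{EK}, the result the paper invokes. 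As written, however, your argument only proves the analogue of the Fact for \(C(\mathbb{R}_+,\mathbb{B})\), which is not the space on which the Fact is applied in the proof of Theorem \ref{prop: loc}.
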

In the remainder of this proof, we show that \((P^n)_{n \in \mathbb{N}}\) satisfies (a) and (b) in Fact \ref{fact}. We start with a few preparations.
In what follows let \(m \in \mathbb{N}\) be arbitrary. Denote \(P^{n, m} \triangleq P^n \circ X^{-1}_{\cdot \wedge \tau_m}\) and fix \(t \geq 0\). 
Let \(Q^m\) be an accumulation point of \((P^{n, m})_{n \in \mathbb{N}}\).
Using the same arguments as in the proof of Theorem \ref{theo1}, we find a \(\lambda_m \in [m-1, m]\) such that for all \(f \in \mathcal{D}\) the process \(M^f_{\cdot \wedge \tau_{\lambda_{m}}}\) is a \(Q^m\)-martingale and \(Q^m \circ X^{-1}_0 = \eta\).
Due to Proposition \ref{prop: A3} in Appendix \ref{app: 1}, the assumption that the MP \((A, b, a, K)\) is well-posed implies that
\begin{align*} 
Q^m = P \text{ on } \mathscr{F}_{\tau_{\lambda_{m}}}.
\end{align*}
We note that the choice of \(\lambda_m\) depends on \(Q^m\), see \eqref{lambda_m}. However, for any accumulation point of \((P^{n, m})_{n \in \mathbb{N}}\) we find an appropriate \(\lambda_m\) in the interval \([m-1, m]\). 
Thus, any accumulation point of \((P^{n, m})_{n \in \mathbb{N}}\) coincides with \(P\) on \(\mathscr{F}_{\tau_{m-1}}\).
Due to \cite[Problem 13, p. 151]{EK} and \cite[Lemma 15.20]{HWY}, the map \[\alpha \mapsto M^*_t (\alpha) \triangleq \sup_{s \in [0, t]} \|\alpha(s)\|\] is upper semicontinuous. Thus, the set 
\[
\{\tau_{m-1} \leq t\} = \{M^*_t \geq m-1\}
\]
is closed in the Skorokhod topology. We deduce from the Portmanteau theorem that 
\begin{equation}\label{port}\begin{split}
 \limsup_{n \to \infty}\ P^{n, m} (\tau_{m-1} \leq t) \leq P (\tau_{m-1} \leq t).
\end{split}\end{equation}
Fix \(\epsilon > 0\).
Since \(P(\tau_{m-1} \leq t) \searrow 0\) as \(m \to \infty\), we find an \(m^o \in \mathbb{N}_{\geq 2}\) such that 
\begin{align}\label{eq: P bound}
P(\tau_{m^o -1} \leq t) \leq \tfrac{\epsilon}{2}.
\end{align}
Because \((P^{n, m^o-1})_{n \in \mathbb{N}}\) is tight, we deduce from Fact \ref{fact} that there exists a compact set \(C(t, \epsilon) \subset \mathbb{B}\) such that 
\begin{align}\label{eq: t imp}
\limsup_{n \to \infty} P^n (X_{t \wedge \tau_{m^o-1}} \not \in C(t, \epsilon)) \leq \tfrac{\epsilon}{2}.
\end{align}
Due to Galmarino's test, see \cite[Lemma III.2.43]{JS}, we have \(\tau_{m^o-1} = \tau_{m^o-1} \circ X_{\cdot \wedge \tau_{m^o}}\). Thus, we obtain
\begin{align*}
P^n (X_t \not \in C(t, \epsilon)) & = P^n(X_{t \wedge \tau_{m^o-1}} \not \in C(t, \epsilon), \tau_{m^o-1} > t) + P^n(X_t \not \in C(t, \epsilon), \tau_{m^o-1} \leq t)
\\&\leq P^n (X_{t \wedge \tau_{m^o-1}} \not \in C(t, \epsilon)) + P^{n, m^o}(\tau_{m^o-1} \leq t).
\end{align*}
From this, \eqref{port}, \eqref{eq: P bound} and \eqref{eq: t imp}, we deduce that 
\[
\limsup_{n \to \infty} P^n(X_t \not \in C(t, \epsilon)) \leq \epsilon.
\]
This proves that the sequence \((P^n)_{n \in \mathbb{N}}\) satisfies (a) in Fact \ref{fact}. 

Next, we show that \((P^n)_{n \in \mathbb{N}}\) satisfies (b) in Fact \ref{fact}. Let \(\epsilon, t\) and \(m^o\) be as before. Because \((P^{n, m^o-1})_{n \in \mathbb{N}}\) is tight there exists a \(\delta > 0\) such that 
\begin{align}\label{eq: t imp2}
\limsup_{n \to \infty} P^n \left(w' (X_{\cdot \wedge \tau_{m^o-1}}, \delta, t) \geq \epsilon\right) \leq \tfrac{\epsilon}{2}.
\end{align}
On the set \(\{\tau_{m^o-1} > t\}\) we have \(w' (X, \delta, t) = w'(X_{\cdot \wedge \tau_{m^o-1}}, \delta, t)\).
Thus, using \eqref{port}, \eqref{eq: P bound} and \eqref{eq: t imp2}, we obtain 
\begin{align*}
\limsup_{n \to \infty}&\ P^n (w'(X, \delta, t) \geq \epsilon) \\&\leq \limsup_{n \to \infty}P^n (w'(X_{\cdot \wedge \tau_{m^o-1}}, \delta, t) \geq \epsilon) + \limsup_{n \to \infty} P^{n, m^o} (\tau_{m^o-1} \leq t) \leq \epsilon.
\end{align*}
In other words, \((P^n)_{n \in \mathbb{N}}\) satisfies (b) in Fact \ref{fact} and the proof is complete.
\qed

\subsection{Proof of Theorem \ref{prop: existence}}\label{sec: pf sm}
For each \(n \in \mathbb{N}\) let \(\phi^n \colon \mathbb{R}^n \to [0, 1]\) be the standard mollifier on \(\mathbb{R}^n\), see, e.g., \cite[p. 147]{gilbarg2001elliptic}. Recall that \(\phi^n\) is supported on the Euclidean unit ball. Set \(\mlambda \triangleq \sup_{k \in \mathbb{N}} \lambda_k < \infty\).
We fix a sequence \((\epsilon_n)_{n \in \mathbb{N}} \subset (0, \infty)\) such that \(\epsilon_n \leq \tfrac{1}{n} \min_{k = 1, \dots, n} \lambda_k\) for all \(n \in \mathbb{N}\). Clearly, we have \(\epsilon_n \leq \frac{1}{n}\mlambda \to 0\) as \(n \to \infty\).
Define 
\[
\theta_n x \triangleq \big(\langle x, h_1\bs, \dots, \langle x, h_n\bs \big), \quad x \in \mathbb{B}, 
\]
and set 
\[
v^n (y, x) \triangleq \frac{1}{\epsilon_n^n}\int_{\mathbb{R}^n} \phi^n \Big(\frac{z - \theta_n x}{\epsilon_n}\Big)v\Big(y, \sum_{i = 1}^n \frac{z^i e_i}{\lambda_i}\Big) \dd z, \quad y \in E, x \in \mathbb{B}.
\]
We define \(b^n\) and \(\sigma^n\) in the same manner. 
Here, recall that \((h_k)_{k \in \mathbb{N}}\) is an orthonormal basis of \(\mathbb{B}\) and that \((e_k)_{k \in \mathbb{N}} = (\lambda_k h_k)_{k \in \mathbb{N}}\) is an orthonormal basis of \(\mathbb{K} = J(\mathbb{B})\).
Next, we check properties of \(v^n, b^n\) and \(\sigma^n\). 
\begin{lemma}\label{lem: 1}
For all \(m \in \mathbb{N}\) and all \(x, z \in \mathbb{K}\) with \(\|x\kn, \|z\kn  \leq m\) there exist constants \(L = L(n, m) \in (0, \infty)\) and \(l = l(n, m) \in (0, \infty)\) such that \[
\int_E \|v^n (y, x) - v^n(y, z)\kn^2 F(\dd y) \leq L \|x - z\kn^2
\]
and 
\[
\|b^n (x) - b^n(z)\kn + \|\sigma^n(x) - \sigma^n(z)\|_{\textup{HS} (\mathbb{K})} \leq l \|x - z\kn.
\]
\end{lemma}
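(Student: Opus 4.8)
The guiding observation is that, although $v,b,\sigma$ are only continuous (with linear growth on $\mathbb{K}$ and local boundedness on $\mathbb{B}$), each of $v^n,b^n,\sigma^n$ depends on the spatial variable $x$ \emph{only} through the smooth, compactly supported kernel $\phi^n$. So the plan is a direct ``differentiate the kernel'' estimate, keeping all $n$-dependent quantities as constants. Since $v^n,b^n,\sigma^n$ are built by the same recipe, I would carry out the argument for $v^n$ in full and then repeat it verbatim for $b^n$ and $\sigma^n$, simply dropping the integration in $y$.

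First I would record the geometry of the map $\theta_n$. Expanding $w:=x-z\in\mathbb{K}$ in the orthonormal basis $(e_k)=(\lambda_k h_k)$ of $\mathbb{K}$ and using that $\langle e_k,h_i\rangle_{\mathbb{B}}$ equals $\lambda_k$ if $k=i$ and $0$ otherwise, one gets $\langle w,h_i\rangle_{\mathbb{B}}=\lambda_i\,\langle w,e_i\rangle_{\mathbb{K}}$ and hence
\[
|\theta_n x-\theta_n z|^2=\sum_{i=1}^n\lambda_i^2\,\langle w,e_i\rangle_{\mathbb{K}}^2\le\mlambda^2\,\|x-z\|_{\mathbb{K}}^2 ,
\]
and likewise $|\theta_n x|,|\theta_n z|\le\mlambda m$ when $\|x\|_{\mathbb{K}},\|z\|_{\mathbb{K}}\le m$. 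Next, since $\phi^n\in C^\infty_c(\mathbb{R}^n)$ is supported in the Euclidean unit ball it is globally Lipschitz, so for every $u\in\mathbb{R}^n$
\[
\Big|\tfrac{1}{\epsilon_n^n}\phi^n\big(\tfrac{u-\theta_n x}{\epsilon_n}\big)-\tfrac{1}{\epsilon_n^n}\phi^n\big(\tfrac{u-\theta_n z}{\epsilon_n}\big)\Big|\le\tfrac{\|\nabla\phi^n\|_\infty\,\mlambda}{\epsilon_n^{n+1}}\,\|x-z\|_{\mathbb{K}} ,
\]
and this quantity vanishes unless $|u-\theta_n x|\le\epsilon_n$ or $|u-\theta_n z|\le\epsilon_n$, hence unless $|u|\le\mlambda m+\epsilon_n=:R_{n,m}$. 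In particular, in the integral representation of $v^n(y,x)-v^n(y,z)$ — obtained by differencing the kernel, the factor $v\big(y,\sum_{i=1}^n u^i e_i/\lambda_i\big)$ being unchanged — the integration region has Lebesgue measure at most $2\,\omega_n\,\epsilon_n^n$, with $\omega_n$ the volume of the unit ball.

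Then I would bound the integrand. For $u\in\mathbb{R}^n$ one has $\sum_{i=1}^n u^i e_i/\lambda_i\in\mathbb{K}$ with $\|\sum_{i=1}^n u^i e_i/\lambda_i\|_{\mathbb{K}}\le|u|/\min_{k\le n}\lambda_k$, so on $\{|u|\le R_{n,m}\}$ the linear growth bounds of hypothesis (iii) of Theorem~\ref{prop: existence}, together with $v(E\times\mathbb{K})\subset\mathbb{K}$ from hypothesis (i), bound $\|v(y,\sum_{i=1}^n u^i e_i/\lambda_i)\|_{\mathbb{K}}$ by $\gamma(y)\,(1+R_{n,m}/\min_{k\le n}\lambda_k)$, and $\|b(\cdot)\|_{\mathbb{K}}+\|\sigma(\cdot)\|_{\textup{HS}(\mathbb{K})}$ by the same quantity with $\gamma(y)$ replaced by the constant in \eqref{eq: lg}. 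Combining this with the preceding two displays yields $\|v^n(y,x)-v^n(y,z)\|_{\mathbb{K}}\le\kappa_{n,m}\,\gamma(y)\,\|x-z\|_{\mathbb{K}}$ for a finite $\kappa_{n,m}$ depending only on $n$ and $m$; squaring, integrating against $F$ and invoking $\int_E\gamma^2\,F(\dd y)<\infty$ then gives the first asserted inequality, and the identical chain of estimates without the integration in $y$ gives the second.

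I do not expect a genuine obstacle, only careful bookkeeping of the constants. The one point to watch is that the mollification argument $\sum_{i=1}^n u^i e_i/\lambda_i$ must be kept inside a $\mathbb{K}$-ball of \emph{controlled} radius in order to invoke the linear growth bound, which is available only on $\mathbb{K}$; this is exactly what forces the constants $L$ and $l$ to depend on $n$ through $\min_{k\le n}\lambda_k$, strictly positive for each fixed $n$ but possibly degenerating as $n\to\infty$. One should also note at the outset that the $\mathbb{K}$- and $L_2(\mathbb{H},\mathbb{K})$-valued Bochner integrals defining $v^n,b^n,\sigma^n$ are well defined and that $\|\cdot\|_{\mathbb{K}}$ and $\|\cdot\|_{\textup{HS}(\mathbb{K})}$ pass under the integral sign, which follows from the continuity hypothesis (ii) of Theorem~\ref{prop: existence} together with separability and standard Borel-space arguments.
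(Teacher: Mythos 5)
Your proposal is correct and follows essentially the same approach as the paper: difference the mollifier kernel, exploit its Lipschitz continuity and compact support to restrict to a ball of controlled radius, convert $\theta_n$-distances to $\mathbb{K}$-distances via the Parseval/embedding identity, and invoke the linear growth hypothesis~(iii) to bound the surviving integrand, absorbing all $n$- and $m$-dependence into the constants. Your bookkeeping differs only cosmetically (you use $\mlambda$ where the paper uses $\|\iota\|_o$ --- the two coincide --- and you note the tighter measure bound $2\omega_n\epsilon_n^n$ rather than the volume of a ball of radius $\mlambda m+\epsilon_n$), so the argument matches the paper's proof.
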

\begin{proof}
We only prove the claim for \(v^n\). For \(b^n\) and \(\sigma^n\) it follows in the same manner.
	Fix \(m \in \mathbb{N}\) and let \(y \in E\) and \(x, z \in \mathbb{K}\) with \(\|x\kn, \|z\kn \leq m\). Define \[G (n, m) = G \triangleq \{u \in \mathbb{R}^n \colon \|u\dn < \|\iota\|_o m + \epsilon_n\},\]
where \(\|\cdot\dn\) denotes the Euclidean norm on \(\mathbb{R}^n\) and \(\|\iota\|_o\) denotes the operator norm of \(\iota\).
Using the Parseval identity, we obtain
\[
\|\theta_n x \dn^2 = \sum_{i = 1}^n \langle x, h_i\bs^2 \leq \|x\bn^2 \leq \|\iota\|^2_o \|x\kn^2.
\]
For all \(u \not \in G\) we have 
\[
\| u  - \theta_n x\dn \geq \|u\dn - \|\theta_n x\dn \geq \|u\dn - \|\iota\|_o m \geq \epsilon_n.
\]
Because smooth functions with compact support are Lipschitz continuous, the function \(\phi^n\) is Lipschitz continuous and we denote the corresponding Lipschitz constant by \(L = L(n)\). 
Furthermore, we have for all \(u \in G\)
\[
\Big\| \sum_{i = 1}^n \frac{u^i}{\lambda_i} e_i\Big\|_\mathbb{K} \leq \sum_{i = 1}^n \frac{\|\iota\|_o m + \epsilon_n}{\lambda_i} \triangleq C = C(n, m).
\]
Now, we deduce from the linear growth assumption, i.e. hypothesis (iii), that
\begin{align*}
\|v^n (y, x) - v^n(y, z)\kn &\leq \frac{1}{\epsilon_n^n} \int_{\mathbb{R}^n} \Big|\phi^n\Big(\frac{u - \theta_n x}{\epsilon_n}\Big) - \phi^n\Big(\frac{u - \theta_n z}{\epsilon_n}\Big)\Big|  \Big\|v\Big(y, \sum_{i = 1}^n \frac{u^i}{\lambda_i} e_i\Big)\Big\|_\mathbb{K} \dd u
\\&=\frac{1}{\epsilon_n^n} \int_{G} \Big|\phi^n\Big(\frac{u - \theta_n x}{\epsilon_n}\Big) - \phi^n\Big(\frac{u - \theta_n z}{\epsilon_n}\Big)\Big| \Big\| v\Big(y, \sum_{i = 1}^n \frac{u^i}{\lambda_i} e_i \Big) \Big\|_\mathbb{K} \dd u
\\&\leq \frac{\gamma(y) (1 + C ) L \|\iota\|_o |G| }{\epsilon^{n+1}_n} \|x - z\kn,
\end{align*}
where \(|G|\) denotes the Lebesgue measure of \(G\).
Thus, we have 
\[
\int_E \|v^n (y, x) - v^n(y, z)\kn^2 F(\dd y) \leq \int_E \gamma^2(y)F(\dd y)\Big(\frac{(1 + C ) L \|\iota\|_o |G| }{\epsilon^{n+1}_n}\Big)^2 \|x - z\kn^2.
\]
This completes the proof.
\end{proof}

\begin{lemma}\label{lem: 2}
There exists a constant \(l \in (0, \infty)\) such that for all \(y \in E\) and \(x \in \mathbb{K}\)
\begin{align*}
\|v^n(y, x)\kn^2  &\leq l \gamma^2(y) \big(1 + \|x\kn^2 \big), \\
\|b^n(x)\kn^2 + \|\sigma^n(x)\|^2_{\textup{HS} (\mathbb{K})} &\leq l \big(1 + \|x\kn^2\big).
\end{align*}
\end{lemma}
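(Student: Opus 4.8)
The plan is to exploit that $v^n(y,\cdot)$, $b^n$ and $\sigma^n$ are, by construction, averages of $v(y,\cdot)$, $b$ and $\sigma$ over balls of radius of order $\epsilon_n$, so that the linear growth bounds in hypothesis~(iii) transfer to the approximations with a constant \emph{independent of $n$}.

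First I would rewrite the defining integrals in a more transparent form. Substituting $z = \theta_n x + \epsilon_n w$, so that $\dd z = \epsilon_n^n\, \dd w$, and recalling that $\phi^n$ is a probability density supported on the Euclidean unit ball of $\mathbb{R}^n$, one obtains
\begin{align*}
v^n(y,x) = \int_{\{\|w\dn \le 1\}} \phi^n(w)\, v\big(y, \xi_n(w,x)\big)\, \dd w, \qquad \xi_n(w,x) \triangleq \sum_{i=1}^n \Big( \la x, e_i\ks + \tfrac{\epsilon_n w^i}{\lambda_i} \Big) e_i,
\end{align*}
and analogous formulas for $b^n$ and $\sigma^n$. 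Here I have used that for $x \in \mathbb{K}$ the $i$-th component of $\theta_n x$ equals $\la x, h_i\bs = \lambda_i \la x, e_i\ks$; this follows by expanding $x = \sum_j \la x, e_j\ks e_j$ in the orthonormal basis $(e_j)_{j \in \mathbb{N}}$ of $\mathbb{K}$ and using $e_j = \lambda_j h_j$, which gives $\la e_j, h_i\bs = \lambda_j \delta_{ij}$. In particular $\xi_n(w,x)$ lies in the finite span of $e_1, \dots, e_n \subset \mathbb{K}$, so the linear growth hypothesis is applicable to it (and the integrand is bounded in $\mathbb{K}$ on $\{\|w\dn\le 1\}$, which makes the vector-valued integral legitimate).

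The crucial step is the estimate
\begin{align*}
\|\xi_n(w,x)\kn^2 = \sum_{i=1}^n \Big( \la x, e_i\ks + \tfrac{\epsilon_n w^i}{\lambda_i} \Big)^2 \le 2\sum_{i=1}^n \la x, e_i\ks^2 + 2 \sum_{i=1}^n \Big(\tfrac{\epsilon_n}{\lambda_i}\Big)^2 (w^i)^2 \le 2\|x\kn^2 + 2,
\end{align*}
valid for all $n \in \mathbb{N}$ and all $w$ with $\|w\dn \le 1$. The last inequality uses Parseval's identity $\sum_{i=1}^n \la x, e_i\ks^2 \le \|x\kn^2$ together with the defining property $\epsilon_n \le \tfrac1n \min_{k = 1, \dots, n} \lambda_k$, which yields $\epsilon_n / \lambda_i \le \tfrac1n$ for $i \le n$ and hence $\sum_{i=1}^n (\epsilon_n w^i/\lambda_i)^2 \le \tfrac1{n^2} \|w\dn^2 \le 1$. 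I expect this to be the only genuine point of the proof: the mollification scale $\epsilon_n$ is chosen precisely so that the perturbation term stays bounded by $1$ for every $n$, even though the coefficients $1/\lambda_i$ need not be bounded (only $\sup_{k \in \mathbb{N}} \lambda_k < \infty$ is assumed).

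Finally I would assemble the bound. By the triangle inequality for vector-valued integrals and $\int \phi^n = 1$ we get $\|v^n(y,x)\kn \le \sup_{\|w\dn \le 1} \|v(y, \xi_n(w,x))\kn$; combining the linear growth bound $\|v(y, \cdot)\kn \le \gamma(y)(1 + \|\cdot\kn)$ from~(iii) with the crucial step gives $\|v^n(y,x)\kn \le \gamma(y)\big(1 + \sqrt{2\|x\kn^2 + 2}\,\big)$, and squaring together with $(1 + \sqrt{2t+2})^2 \le 4t + 6 \le 6(1+t)$ for $t \ge 0$ yields the first asserted inequality with $l = 6$. The identical computation, now with \eqref{eq: lg} in place of the growth bound on $\|v\|$, gives $\|b^n(x)\kn + \|\sigma^n(x)\|_{\textup{HS}(\mathbb{K})} \le L\big(1 + \sqrt{2\|x\kn^2 + 2}\,\big)$, whence $\|b^n(x)\kn^2 + \|\sigma^n(x)\|^2_{\textup{HS}(\mathbb{K})} \le \big(\|b^n(x)\kn + \|\sigma^n(x)\|_{\textup{HS}(\mathbb{K})}\big)^2 \le 6L^2(1 + \|x\kn^2)$. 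Taking $l$ to be the maximum of $6$ and $6L^2$ completes the proof.
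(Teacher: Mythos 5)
Your proof is correct and follows essentially the same route as the paper's: change of variables to rewrite the mollification as an average over the unit ball, the triangle inequality for vector-valued integrals, the identity $\la x, h_i\bs = \lambda_i\la x, e_i\ks$, Parseval, and the observation that $\epsilon_n \le \tfrac1n\min_{k\le n}\lambda_k$ makes the shift $\sum_{i\le n}\tfrac{\epsilon_n w^i}{\lambda_i}e_i$ uniformly bounded in $\mathbb{K}$. The only (immaterial) difference is that you estimate $\|\sum_i \tfrac{\epsilon_n w^i}{\lambda_i}e_i\kn$ directly via orthonormality of $(e_i)$ to get $\le n^{-1}$, whereas the paper uses the triangle inequality plus Cauchy--Schwarz to get $\le n^{-1/2}$, leading to your slightly sharper constant $l=6$ versus the paper's $l=8$.
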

\begin{proof}
For all \(u \in \mathbb{R}^n\) with \(\|u\dn \leq 1\) the triangle and the Cauchy-Schwarz inequality yield that
\[
\Big\| \sum_{i = 1}^n \frac{\epsilon_n u^i}{\lambda_i} e_i\Big\|_\mathbb{K} \leq \sum_{i = 1}^n \frac{\epsilon_n}{\lambda_i} |u^i| \leq \frac{\|u\dn}{\sqrt{n}} \leq 1.
\]
Thus, we deduce from the linear growth assumption, i.e. hypothesis (iii), that for all \(y \in E\) and \(x \in \mathbb{K}\)
\begin{align*}
\|v^n(y, x)\kn &= \Big\| \int_{\mathbb{R}^n} \phi^n(u) v \Big(y, \sum_{i = 1}^n \Big( \frac{\epsilon_n u^i}{\lambda_i} + \frac{\langle x, h_i\bs}{\lambda_i} \Big) e_i \Big) \Big\|_\mathbb{K}
\\&\leq \int_{\mathbb{R}^n} \phi^n(u)  \Big\|v\Big(y, \sum_{i = 1}^n \Big(\frac{\epsilon_nu^i}{\lambda_i} + \frac{\langle x, h_i\bs}{\lambda_i} \Big) e_i\Big)\Big\|_\mathbb{K} \dd u \\&\leq \gamma(y) \Big(2 + \Big\| \sum_{i = 1}^n \frac{\langle x, h_i\bs}{\lambda_i} e_i\Big\|_\mathbb{K} \Big) \int_{\mathbb{R}^n} \phi^n(u)\dd u
\\&= \gamma(y)\Big(2 + \Big\| \sum_{i = 1}^n \frac{\langle x, h_i\bs}{\lambda_i} e_i\Big\|_\mathbb{K} \Big).
\end{align*}
Note that for \(x \in \mathbb{K}\)
\begin{align*}
\langle x, h_i\bs = \lambda_i \langle J^{-1} x, h_i\bs = \lambda_i \langle J^{-1} x, J^{-1} e_i\bs = \lambda_i \langle x, e_i\ks
\end{align*}
and that
\begin{align*}
\Big(2 + \Big\| \sum_{i = 1}^n \langle x, e_i\ks e_i\Big\|_\mathbb{K} \Big)^2 &\leq 8 + 2 \Big\| \sum_{i = 1}^n \langle x, e_i\ks e_i\Big\|^2_\mathbb{K}
\\&= 8 + 2 \sum_{i = 1}^n \langle x, e_i\ks^2
\\&\leq 8 \big(1 + \|x\kn^2\big),
\end{align*}
where we use the Parseval identity and the elementary inequality \((a_1 + a_2)^2 \leq 2 a^2_1 + 2 a^2_2\) for all \(a_1, a_2 \in \mathbb{R}\).
Thus, we obtain
\[
\|v^n(y, x)\kn^2 \leq 8  \gamma^2(y) \big(1 + \|x\kn^2\big).
\]
A similar argument applies for \(b^n\) and \(\sigma^n\).
\end{proof}
\begin{lemma}\label{lem: 3}
For all \(y \in E\) we have \[
\|v^n(y, \cdot) - v(y, \cdot)\bn + \| b^n - b\bn + \| \sigma^n - \sigma\|_{L(\mathbb{B})} \to 0\]
as \(n \to \infty\) uniformly on compact subsets of \(\mathbb{B}\). Moreover, for any compact set \(K \subset \mathbb{B}\) it holds that
\[
\int_E \sup_{n \in \mathbb{N}} \sup_{x \in K} \| v^n(y, x)\bn^2  F(\dd y) < \infty.
\]
\end{lemma}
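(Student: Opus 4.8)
The plan is to first recast the mollified coefficients transparently. Substituting $z = \theta_n x + \epsilon_n u$ in the defining integral and using that $(h_k)_{k \in \mathbb{N}}$ is an orthonormal basis of $\mathbb{B}$ with $e_k = \lambda_k h_k$, one obtains
\[
v^n(y, x) = \int_{\mathbb{R}^n} \phi^n(u)\, v\big(y, \psi_n(x, u)\big)\, \dd u, \qquad \psi_n(x, u) \triangleq \Pi_n x + \epsilon_n \sum_{i = 1}^n u^i h_i,
\]
where $\Pi_n x \triangleq \sum_{i = 1}^n \langle x, h_i\bs h_i$ is the orthogonal projection of $\mathbb{B}$ onto $\operatorname{span}(h_1, \dots, h_n)$ and the integral is a Bochner integral in $\mathbb{B}$; the representations for $b^n$ (in $\mathbb{B}$) and $\sigma^n$ (in $L(\mathbb{H}, \mathbb{B})$) are analogous, the integrals being well-defined by the continuity and local boundedness in hypotheses (ii) and (iii). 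Since $\phi^n$ is supported on the Euclidean unit ball, the perturbation satisfies $\|\epsilon_n \sum_{i = 1}^n u^i h_i\bn = \epsilon_n \|u\dn \leq \epsilon_n \to 0$ on $\operatorname{supp}\phi^n$.

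The key analytic input is a compactness statement. Because $\|\Pi_n\|_o = 1$ and $\Pi_n x \to x$ in $\mathbb{B}$ for every $x$, a finite $\varepsilon$-net argument over a compact set $K \subset \mathbb{B}$ shows that $\delta_n(K) \triangleq \sup_{x \in K}\|\Pi_n x - x\bn + \epsilon_n \to 0$; hence every $\psi_n(x, u)$ with $x \in K$ and $\|u\dn \leq 1$ lies within distance $\delta_n(K)$ of $K$. Consequently, for each $\varepsilon > 0$ all but finitely many of the sets $\{\psi_n(x, u) : x \in K,\ \|u\dn \leq 1\}$ are contained in the $\varepsilon$-neighbourhood of the totally bounded set $K$, while the remaining finitely many are continuous images of the compact set $K \times \{u \in \mathbb{R}^n : \|u\dn \leq 1\}$, hence compact; therefore $K \cup \bigcup_{n} \{\psi_n(x, u) : x \in K,\ \|u\dn \leq 1\}$ is totally bounded and its closure $\widetilde K$ in $\mathbb{B}$ is compact.

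The two assertions now follow. Using $\int_{\mathbb{R}^n}\phi^n(u)\,\dd u = 1$ and $\psi_n(x, u) \in \widetilde K$ for $x \in K$, one estimates
\[
\sup_{x \in K}\|v^n(y, x) - v(y, x)\bn \leq \sup_{x \in K,\ \|u\dn \leq 1} \|v(y, \psi_n(x, u)) - v(y, x)\bn,
\]
and likewise for $b^n - b$ and $\sigma^n - \sigma$ in $\|\cdot\|_{L(\mathbb{B})}$. By hypothesis (ii) the maps $v(y, \cdot)$, $b$ and $\sigma$ are continuous on $\mathbb{B}$, hence uniformly continuous on the compact set $\widetilde K$, and since $\sup_{x \in K,\ \|u\dn \leq 1}\|\psi_n(x, u) - x\bn \leq \delta_n(K) \to 0$, the right-hand sides tend to $0$; this proves the first claim. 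For the integrability statement, the representation gives $\|v^n(y, x)\bn \leq \sup_{z \in \widetilde K}\|v(y, z)\bn$ for all $n \in \mathbb{N}$ and $x \in K$; as $\widetilde K$ is bounded in $\mathbb{B}$, \eqref{eq: loc bdd B} provides a Borel function $\zeta_{\widetilde K}\colon E \to \mathbb{R}_+$ with $\int_E \zeta_{\widetilde K}^2(z) F(\dd z) < \infty$ and $\sup_{x \in \widetilde K}\|v(y, x)\bn \leq \zeta_{\widetilde K}(y)$, whence $\sup_{n \in \mathbb{N}}\sup_{x \in K}\|v^n(y, x)\bn^2 \leq \zeta_{\widetilde K}^2(y)$ is $F$-integrable.

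I expect the main obstacle to lie in the compactness bookkeeping of the second paragraph: one has to ensure that the arguments of the unmollified coefficients stay inside one fixed compact subset of $\mathbb{B}$ as $n$ varies, so that plain continuity of $b$, $\sigma$ and $v(y, \cdot)$ can be upgraded to the uniform continuity exploited above. This is precisely where the compactness of $K$ and the strong (not merely pointwise) operator convergence $\Pi_n \to \textup{Id}_{\mathbb{B}}$ enter; the remaining estimates are routine.
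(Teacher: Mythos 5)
Your proof is correct and follows essentially the same strategy as the paper: rewrite $v^n(y,x)$ as a convolution $\int \phi^n(u)\, v(y, \psi_n(x,u))\,\dd u$ with $\psi_n(x,u)$ uniformly close to $x$ on $K$, show that all arguments $\psi_n(x,u)$ lie in one fixed compact subset of $\mathbb{B}$, and then exploit uniform continuity of $v(y,\cdot)$, $b$, $\sigma$ on that compact set for the first claim and hypothesis (iii) with \eqref{eq: loc bdd B} for the second. The only (cosmetic) deviation is that you establish compactness of the enlarged set $\widetilde K$ via total boundedness driven by the single quantity $\delta_n(K) \to 0$, which you then reuse for the uniform-convergence estimate, whereas the paper proves compactness of the analogous set $G$ by a direct sequential argument and then performs a separate $\epsilon$-net computation with explicit tail-sum bounds — your organization is marginally tighter but the mathematical content is the same.
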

\begin{remark}
	We stress that \(y \mapsto \sup_{n \in \mathbb{N}} \sup_{x \in K} \|v^n(y, x)\bn\) is \(\mathcal{E}\)-measurable. To see this, recall that compact metric spaces are separable, i.e. there exists a countable dense (w.r.t. \(\|\cdot\bn\)) set \(K' \subset K\). Because \(x \mapsto v(y, x)\) is continuous for all \(y \in E\), we conclude that the map \[
	y \mapsto \sup_{n \in \mathbb{N}} \sup_{x \in K} \|v^n(y, x)\bn = \sup_{n \in \mathbb{N}} \sup_{x \in K'} \|v^n(y, x)\bn
	\]
	 is \(\mathcal{E}\)-measurable as the countable supremum over \(\mathcal{E}\)-measurable functions.
\end{remark}
\begin{proof}
	Again, we only show the claim for \(v^n\).
Fix \(y \in E\) and \(\varepsilon > 0\) and let \(K \subset \mathbb{B}\) be compact. 
We set 
\[
G_n \triangleq \Big\{  \sum_{i = 1}^n \big(\epsilon_n u^i h_i + \langle x, h_i\bs h_i\big) \colon x \in K, u \in \mathbb{R}^n \text{ with } \|u\dn \leq 1\Big\}.
\]
and \(G \triangleq K \cup \big(\bigcup_{n \in \mathbb{N}} G_n\big)\).
For all \(n \in \mathbb{N}\) the set \(G_n\) is compact in \(\mathbb{B}\) as it is the image of the compact set \(\{u \in \mathbb{R}^n \colon \|u\dn \leq 1\} \times K\) under the continuous map
\[
(u, x) \mapsto \sum_{i = 1}^n \big(\epsilon_n u^i h_i + \langle x, h_i\bs h_i\big).
\]
We claim that also the set \(G\) is compact in \(\mathbb{B}\). To see this take a sequence \((y_n)_{n \in \mathbb{N}}\subset G\). We have to show that \((y_n)_{n \in \mathbb{N}}\) has a subsequence converging to an element in \(G\). There exists a sequence \((k_n)_{n \in \mathbb{N}} \subset \mathbb{N}\) and two sequences \((x_n)_{n \in \mathbb{N}} \subset \mathbb{B}\) and \((u_n)_{n \in \mathbb{N}}\) with \(u_n \in \mathbb{R}^{k_n}\) and \(\|u_n\|_{\mathbb{R}^{k_n}} \leq 1\) such that 
\[
y_n = \sum_{i = 1}^{k_n} \big(\epsilon_{k_n} u^i_n h_i + \langle x_n, h_i\bs h_i\big),\quad n \in \mathbb{N}.
\]
Suppose that \(k \triangleq \sup_{n \in \mathbb{N}} k_n < \infty\). Then, we have 
\(
(y_n)_{n \in \mathbb{N}}\subset\bigcup_{i = 1}^k G_i.
\)
Because \(\bigcup_{i = 1}^k G_i\) is compact as the finite union of compact sets, the sequence \((y_n)_{n \in \mathbb{N}}\) has a subsequence converging to an element in \(\bigcup_{i = 1}^k G_i\subset G\).
Suppose now that \(k = \infty\). Passing to a subsequence if necessary, we can assume that \(k_n \to \infty\) as \(n \to \infty\) and, because \(K\) is compact, we can further assume that there exists an \(x \in K\) such that \(x_n \to x\) as \(n \to \infty\).
Now, we have 
\begin{align*}
\Big\| \sum_{i = 1}^{k_n} \big(\epsilon_{k_n} u^i_n h_i &+ \langle x_n, h_i\bs h_i \big) - x \Big\|_\mathbb{B} \\&\leq \sum_{i = 1}^{k_n} |u^i_n| \epsilon_{k_n} + \Big\| \sum_{i = 1}^{k_n} \langle x_n - x, h_i \bs h_i - \sum_{i = k_n + 1}^\infty \langle x, h_i\bs h_i \Big\|_{\mathbb{B}} \\&\leq \frac{\mlambda}{\sqrt{k_n}} + \|x_n - x\bn + \sqrt{\sum_{i = k_n + 1}^\infty \langle x, h_i\bs^2}
\to 0\end{align*}
as \(n \to \infty\). Thus, in the case \(k = \infty\) the sequence \((y_n)_{n \in \mathbb{N}}\) has a subsequence converging to a point in \(K \subset G\). We conclude the compactness of \(G\).

Because \(G\) is compact, hypothesis (ii) implies that the map \(G \ni x \mapsto v (y, x)\) is uniformly continuous. In other words, there exists a \(\delta > 0\) such that for all \(x, z \in G\) with \(\|x - z\bn < \delta\) we have 
\[
\|v(y, x) - v(y, z)\bn \leq \varepsilon.
\]
Let \(\epsilon \leq \delta (2 \sqrt{2})^{-1}\).
Because compact sets are totally bounded, there exists an \(N_1 \in \mathbb{N}\) and points \(x_1, \dots, x_{N_1} \in \mathbb{B}\) such that 
\[
K \subset \bigcup_{i = 1}^{N_1} B_{x_i} (\epsilon), 
\]
where \(B_{x_i}(\epsilon) \triangleq \{x \in \mathbb{B} \colon \|x - x_i\bn \leq \epsilon\}\).
Take \(u \in \mathbb{R}^n\) with \(\|u\dn \leq 1\) and \(x \in K\). We find a \(k \in \{1, \dots, N_1\}\) such that \(\|x - x_k\bn \leq \epsilon\). Thus, we have
\begin{align*}
\Big\| \sum_{i = 1}^n \big(\epsilon_n u^i h_i + \langle x, h_i\bs h_i \big) - x \Big\|_\mathbb{B}
&\leq \frac{\mlambda}{\sqrt{n}} + \sqrt{\sum_{i = n + 1}^\infty \langle x, h_i\bs^2}
\\&\leq  \frac{\mlambda}{\sqrt{n}}  + \sqrt{2\epsilon^2 + 2\max_{j = 1, \dots, N_1} \sum_{i = n + 1}^\infty \langle x_j, h_i\bs^2}
\\&\leq  \frac{\mlambda}{\sqrt{n}}  + \frac{\delta}{2} + \sqrt{2\max_{j = 1, \dots, N_1} \sum_{i = n + 1}^\infty \langle x_j, h_i\bs^2}.
\end{align*}
Therefore, we find an \(N_2 \in \mathbb{N}\) and for all \(n \geq N_2\) we have 
\[
\sup_{\|u\dn \leq 1} \sup_{x \in K} \Big\| \sum_{i = 1}^n \big(\epsilon_n u^i h_i + \langle x, h_i\bs h_i \big) - x \Big\|_\mathbb{B} < \delta.
\]
We conclude that for all \(n \geq N_2\)
\begin{align*}
\sup_{x \in K} \|v^n (y, x) &- v(y, x)\bn\\&\leq \sup_{x \in K} \int_{\mathbb{R}^n} \phi^n(u) \Big\| v\Big(y, \sum_{i = 1}^n \big(\epsilon_n u^i h_i + \langle x, h_i\bs h_i\big)\Big) - v\Big(y, x\Big)\Big\|_{\mathbb{B}} \dd u \leq \varepsilon.
\end{align*}
This proves the first claim.

To see that the second claim holds it suffices to note that 
\[
\sup_{n \in \mathbb{N}} \sup_{x \in K} \|v^n (y, x)\bn \leq  \sup_{x \in G} \|v(y, x)\bn
\]
and to use hypothesis (iii). The proof is complete.
\end{proof}
\begin{lemma}\label{eq: loc bdd}
	For any bounded set \(G \subset \mathbb{B}\) it holds that
	\[
	\sup_{n \in \mathbb{N}} \sup_{x \in G} \Big( \|b^n(x)\bn + \|\sigma^n(x)\|_{L(\mathbb{B})}  + \int_E \|v^n(x, y)\bn^2 F(\dd y) \Big) < \infty.
	\]
\end{lemma}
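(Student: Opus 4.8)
The plan is to reduce the claim to the local boundedness hypotheses \eqref{eq: lbdd} and \eqref{eq: loc bdd B}: since the mollification window $\epsilon_n$ never exceeds $\mlambda$, mollifying the coefficients around points of a bounded set evaluates $b$, $\sigma$ and $v(y,\cdot)$ only on a slightly enlarged, still bounded, subset of $\mathbb{B}$. First I would rewrite $b^n$, $\sigma^n$ and $v^n$ in the form already used in the proof of Lemma \ref{lem: 2}: substituting $u = (z - \theta_n x)/\epsilon_n$ and using $e_i/\lambda_i = h_i$, one obtains
\[
v^n(y, x) = \int_{\mathbb{R}^n} \phi^n(u)\, v\Big(y, {\textstyle\sum_{i = 1}^n} \big(\epsilon_n u^i + \langle x, h_i\bs\big) h_i\Big)\dd u,
\]
and the analogous identities for $b^n$ and $\sigma^n$, where $\phi^n$ is a probability density supported on the closed Euclidean unit ball of $\mathbb{R}^n$.

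Next, I would fix a bounded set $G \subset \mathbb{B}$ and put $R \triangleq \sup_{x \in G} \|x\bn < \infty$. For $\|u\dn \leq 1$ and $x \in G$, Parseval's identity gives $\sum_{i = 1}^n \langle x, h_i\bs^2 \leq \|x\bn^2 \leq R^2$, and, since $\epsilon_n \leq \tfrac1n \min_{k = 1, \dots, n} \lambda_k \leq \mlambda$, the elementary inequality $(a + b)^2 \leq 2 a^2 + 2 b^2$ yields
\[
\Big\| \sum_{i = 1}^n \big(\epsilon_n u^i + \langle x, h_i\bs\big) h_i \Big\bn^2 = \sum_{i = 1}^n \big(\epsilon_n u^i + \langle x, h_i\bs\big)^2 \leq 2 \mlambda^2 + 2 R^2.
\]
Hence, with $B \triangleq \{ w \in \mathbb{B} \colon \|w\bn \leq \sqrt{2\mlambda^2 + 2 R^2}\}$, a bounded subset of $\mathbb{B}$ depending only on $G$, every point at which $b$, $\sigma$ or $v(y, \cdot)$ is evaluated in the above integral representations lies in $B$, uniformly over $n \in \mathbb{N}$, $x \in G$ and $\|u\dn \leq 1$.

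Then I would invoke hypothesis (iii). By \eqref{eq: lbdd}, $C_1 \triangleq \sup_{w \in B} \|b(w)\bn + \sup_{w \in B} \|\sigma(w)\|_{L(\mathbb{B})} < \infty$, and by \eqref{eq: loc bdd B} there is a Borel function $\zeta_B \colon E \to \mathbb{R}_+$ with $\int_E \zeta_B^2(y) F(\dd y) < \infty$ and $\sup_{w \in B} \|v(y, w)\bn \leq \zeta_B(y)$ for every $y \in E$. Since $\phi^n$ integrates to $1$, moving the norm inside the mollification integral — for $\sigma^n$ first applying it to an arbitrary $h \in \mathbb{H}$ and then taking the supremum over the unit ball of $\mathbb{H}$ — gives
\[
\|b^n(x)\bn \leq \sup_{w \in B} \|b(w)\bn, \qquad \|\sigma^n(x)\|_{L(\mathbb{B})} \leq \sup_{w \in B} \|\sigma(w)\|_{L(\mathbb{B})}, \qquad \|v^n(y, x)\bn \leq \zeta_B(y),
\]
and hence $\int_E \|v^n(y, x)\bn^2 F(\dd y) \leq \int_E \zeta_B^2(y) F(\dd y)$. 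Summing the three bounds and taking the supremum over $n \in \mathbb{N}$ and $x \in G$ yields the assertion, with explicit value $C_1 + \int_E \zeta_B^2(y) F(\dd y)$.

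I do not expect a genuine obstacle here: the only points needing care are the uniform bound $\epsilon_n \leq \mlambda$ (so that the mollification window stays bounded as $n \to \infty$) and the elementary justification that the $\mathbb{B}$-norm and the operator norm pass inside the vector-valued mollification integral. The conceptual content is entirely the second paragraph — mollifying around a bounded set with a bounded window samples the coefficients only on a bounded set — after which the local boundedness assumptions finish the argument immediately.
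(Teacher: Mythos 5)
Your argument is correct and matches the paper's proof essentially line for line: both observe that for $x\in G$ and $\|u\dn\le 1$ the mollified coefficients sample $b$, $\sigma$, $v(y,\cdot)$ only on the ball $B=\{w\colon \|w\bn^2\le 2\mlambda^2+2R^2\}$, then invoke \eqref{eq: lbdd} and \eqref{eq: loc bdd B} with that bounded set. The only cosmetic difference is that you spell out the passage of the $\mathbb{B}$-norm and operator norm through the vector-valued mollification integral, which the paper leaves implicit.
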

\begin{proof}
	There exists an \(m \in \mathbb{N}\) such that for all \(x \in G\) we have \(\|x\bn \leq m\). Thus, for all \(x \in G\) and \(u \in \mathbb{R}^n\) with \(\|u\dn \leq 1\) we have 
	\begin{align*}
	\Big\| \sum_{i = 1}^n \big(\epsilon_n u^i + \langle x, h_i\bs \big)h_i \Big\|^2_\mathbb{B} &\leq 2 \Big\|\sum_{i = 1}^n \epsilon_n u^i h_i\Big\|^2_{\mathbb{B}} + 2 \sum_{i = 1}^n \langle x, h_i\bs^2
	\leq 2\mlambda^2 + 2 m^2.
	\end{align*}
	Set \(G' \triangleq \{x \in \mathbb{B} \colon \|x\bn^2 \leq 2 \mlambda^2 + 2 m^2\}\).
	Thus, using hypothesis (iii), we obtain that 
	\begin{align*}
	\sup_{n \in \mathbb{N}} \sup_{x \in G} \int_E \|v^n(y, x)\bn^2 F(\dd y) 
	\leq \int_E \zeta^2_{G'}(y)F(\dd y) < \infty.
	\end{align*}
	The remaining claims follow in the same manner.
\end{proof}
Denote by \(\Omega^o\) the Skorokhod space of \cadlag functions \(\mathbb{R}_+ \to \mathbb{K}\), equip it with the Skorokhod topology and denote the corresponding Borel \(\sigma\)-field by \(\mathcal{F}^o\).
\begin{lemma}\label{lem: sde}
For all \(n \in \mathbb{N}\) the MP \((0, \bar{b}^{n}, \bar{a}^n, \bar{K}^n, \eta)\) has a solution \(\bar{P}^{n}\), seen as a probability measure on \((\Omega^o, \mathcal{F}^o)\), where for all \(x \in \mathbb{K}\)
\begin{align*}
\bar{b}^{n} (x) &\triangleq b^n (x) + \int_E \big(h(v^n(y, x)) - v^n(y, x)\big)F(\dd y),\\ \bar{a}^n (x)&\triangleq \sigma^n (x) \sigma^n (x)^*, \\ \bar{K}^n(x, G) &\triangleq \int_E \1_{G\backslash \{0\}} (v^n(y, x)) F(\dd y),\quad G \in \mathcal{B}(\mathbb{K}).\end{align*}
Here, \(\sigma^n(x)^* \in L (\mathbb{K}, \mathbb{H})\) denotes the adjoint of \(\sigma (x)\in L(\mathbb{H},\mathbb{K})\).
\end{lemma}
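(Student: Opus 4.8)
The plan is to obtain $\bar P^n$ as the law of the (pathwise unique) strong solution to the jump--diffusion SDE on the Hilbert space $\mathbb{K}$ whose coefficients are the mollified maps $b^n,\sigma^n,v^n$. By Lemma~\ref{lem: 1} these coefficients are Lipschitz continuous on every ball of $\mathbb{K}$ (with radius-dependent constants), and by Lemma~\ref{lem: 2} they obey a global linear growth bound on $\mathbb{K}$. Consequently the existence and uniqueness theory for SDEs of the type \eqref{eq: main SDE} on Hilbert spaces (see \cite{doi:10.1080/17442501003624407,metivier}), combined with a routine localization argument that upgrades the local Lipschitz property plus linear growth to non-explosion, produces, on some stochastic basis carrying a cylindrical Brownian motion $W$ on $\mathbb{H}$, a Poisson random measure $p$ with compensator $q = \dd t \otimes F$, and an independent initial value $Y_0$ with $\Law(Y_0) = \eta$, a unique \cadlag $\mathbb{K}$-valued process $Y$ satisfying
\begin{equation*}
\begin{split}
Y_t = Y_0 &+ \int_0^t b^n(Y_{s-})\,\dd s + \int_0^t \sigma^n(Y_{s-})\,\dd W_s \\
&\quad + \int_0^t\!\!\int_E v^n(y,Y_{s-})\,(p-q)(\dd y,\dd s),\qquad t \geq 0.
\end{split}
\end{equation*}
No moment assumption on $\eta$ is needed: one first establishes non-explosion for deterministic starting points---where all moments are trivially finite, so that the Burkholder--Davis--Gundy and Gronwall inequalities apply---and then integrates over $\eta$ by Fubini's theorem.

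Next I set $\bar P^n \triangleq \Law(Y)$, regarded as a probability measure on $(\Omega^o,\mathcal{F}^o)$; then $\bar P^n \circ X_0^{-1} = \Law(Y_0) = \eta$. Since $A = 0$ we have $A^* = 0$ and $D(A^*) = \mathbb{K}$, so the test functions for the MP on $\mathbb{K}$ are $f = g(\langle \cdot,y^*_1\rangle,\dots,\langle \cdot,y^*_k\rangle)$ with $g \in C^2_c(\mathbb{R}^k)$ and $y^*_1,\dots,y^*_k \in \mathbb{K}$, and the operator $\bar{\mathcal{K}}^n$---defined as in \eqref{K} with parameters $(0,\bar b^n,\bar a^n,\bar K^n)$---carries no $A^*$-term. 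Applying It\^o's formula to $f(Y_t)$ I read off the drift
\[
\sum_{i} \langle b^n(Y_{s-}),y^*_i\rangle\, \partial_i f(Y_{s-}) + \tfrac12\sum_{i,j}\langle \bar a^n(Y_{s-}) y^*_i, y^*_j\rangle\, \partial^2_{ij} f(Y_{s-})
\]
together with the compensator term $\int_E\big(f(Y_{s-}+v^n(y,Y_{s-})) - f(Y_{s-}) - \sum_i \langle v^n(y,Y_{s-}),y^*_i\rangle\,\partial_i f(Y_{s-})\big)F(\dd y)$, the remaining stochastic integrals being local martingales. Rewriting this compensator term by inserting the truncation function $h$ into the linear part and pushing $F$ forward under $v^n(\cdot,x)$ to the kernel $\bar K^n(x,\cdot)$, the resulting correction equals
\[
\sum_i \Big\langle \int_E \big(h(v^n(y,x)) - v^n(y,x)\big)F(\dd y),\, y^*_i\Big\rangle\, \partial_i f(x) = \sum_i \langle \bar b^n(x) - b^n(x),\, y^*_i\rangle\, \partial_i f(x),
\]
by the very definition of $\bar b^n$, so that the total drift in It\^o's formula is exactly $\bar{\mathcal{K}}^n f$. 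Hence $M^f = f(X) - f(X_0) - \int_0^{\cdot}\bar{\mathcal{K}}^n f(X_{s-})\,\dd s$ is a local $\bar P^n$-martingale for every such $f$, and therefore $\bar P^n \in \mathcal{M}(0,\bar b^n,\bar a^n,\bar K^n,\eta)$. (Equivalently, one may phrase this as the computation of the $h$-characteristics of the semimartingale $Y$.)

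I expect the main effort to lie in two standard but delicate points. First, upgrading the merely local Lipschitz estimate of Lemma~\ref{lem: 1} to a globally defined, non-exploding solution, without imposing integrability on $\eta$: this is the localization argument indicated above, driven by the linear growth bound of Lemma~\ref{lem: 2}. Second, the bookkeeping in It\^o's formula, where one must verify that every integral occurring is legitimate and that $\bar b^n,\bar a^n,\bar K^n$ meet the requirements placed on the coefficients of a martingale problem on $\mathbb{K}$ in Section~\ref{2.1}---in particular that $\bar b^n$ is a well-defined, locally bounded map, which uses $\int_E \gamma^2(y)\,F(\dd y) < \infty$ from hypothesis~(iii) together with the elementary bound $\|h(z) - z\kn \leq \textup{const.}\,\|z\kn^2$ (valid because $h$ agrees with the identity near $0$), and that $\bar K^n$ is a Borel transition kernel with the integrability demanded in Section~\ref{2.1}, which follows from Lemma~\ref{lem: 2} in the same fashion. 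Once these verifications are in place, the identification of the drift with $\bar{\mathcal{K}}^n f$ is routine.
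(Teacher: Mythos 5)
Your proposal follows the same route as the paper: construct a strong solution of the mollified SDE on $\mathbb{K}$ via the local Lipschitz estimate of Lemma~\ref{lem: 1} plus the linear-growth/non-explosion argument driven by Lemma~\ref{lem: 2}, handle arbitrary initial laws by reducing to deterministic ones, and then pass from the SDE to the martingale problem (the paper invokes \cite[Theorem 3.13]{criens18} for this last step, which packages exactly the It\^o-formula computation you spell out, including the well-definedness check for $\bar b^n$). The only presentational difference is that you derive the SDE-to-MP step by hand rather than citing the prior result, and you invoke Fubini for general $\eta$ where the paper's footnote uses a Yamada--Watanabe argument; neither difference changes the substance of the proof.
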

\begin{proof}
	We show that the SDE	\begin{align}\label{eq: SDE loc L}
	\dd Y_t = b^n (Y_{t-}) \dd t + \sigma^n(Y_{t-})\dd W_t + \int_E v^n (y, Y_{t-}) (p - q)(\dd y, \dd t),\quad Y_0 = \zeta \sim \eta,
	\end{align} has a solution with paths in \(\Omega^o\). Fix a filtered probability space which supports a cylindrical standard Brownian motion \(W\) and a compensated random measure \(p - q\).
	As in the proof of \cite[Lemma 34.11]{metivier}, we define for \(m \in \mathbb{N}, y \in E\) and \(x \in \mathbb{K}\)
	\begin{align*}
	b^{n, m}(x) &\triangleq b^n \big(\big(1 \wedge \tfrac{m}{\|x\kn}\big) x\big),\\
	\sigma^{n, m}(x) &\triangleq \sigma^n\big(\big(1 \wedge \tfrac{m}{\|x\kn}\big) x\big),\\
	v^{n, m} (y, x) &\triangleq v^n \big(y, \big(1 \wedge  \tfrac{m}{\|x\kn}\big) x\big).
\end{align*}	
	Due to Lemma \ref{lem: 1}, the coefficients \(b^{n, m}, \sigma^{n, m}\) and \(v^{n, m}\) satisfy a global Lipschitz condition, whose Lipschitz constant might depend on \(m\). Thus, we deduce from \cite[Theorem 3.11]{doi:10.1080/17442501003624407}\footnote{\cite[Theorem 3.11]{doi:10.1080/17442501003624407} only applies for square-integrable initial values and trace class Brownian motion. Latter is no restriction due to \cite[Proposition 4.7]{DePrato}. Because, due to Proposition \ref{prop: A3} in Appendix \ref{app: 1} and a Gronwall-type argument, weak existence and pathwise uniqueness holds for all  initial laws, a Yamada-Watanabe argument shows that also strong existence holds for arbitrary initial laws.} that for each \(m \in \mathbb{N}\) there exists a \cadlag adapted \(\mathbb{K}\)-valued process \(Y^m\) with dynamics
	\[
	\dd Y^m_t = b^{n, m} (Y^m_{t-}) \dd t + \sigma^{n, m} (Y^m_{t-})\dd W_t + \int_E v^{n, m}(y, Y^m_{t-})(p - q)(\dd y, \dd t), \quad Y^m_0 = \zeta.
	\]
	Define
	\[
	\tau_m \triangleq \inf (t \geq 0 \colon \|X_{t - }\kn \geq m \text{ or } \|X_t\kn \geq m).
	\] 
	We note that \(b^n(x) = b^{n, m} (x)= b^{n, m + 1} (x), \sigma^n(x) = \sigma^{n, m} (x)= \sigma^{n, m + 1}(x)\) and \(v^n(y, x) = v^{n, m}(y, x) = v^{n,m + 1}(y, x)\) for all \(x \in \mathbb{K} \colon \|x\kn \leq m\) and \(y \in E\). Thus, arguing as in the proof of \cite[Lemma 34.8]{metivier}, the Lipschitz conditions imply that a.s. \(Y^m_t = Y^{m + 1}_t\) for \(t \leq \tau_m \circ Y^m \wedge \tau_m \circ Y^{m + 1}\). Due to Galmarino's test (see \cite[Lemma III.2.43]{JS}), this yields that a.s.
	\(
	\tau_m \circ Y^m = \tau_m \circ Y^{m + 1}.
	\)
	Lemma \ref{lem: 2} implies that \(b^{n, m}, \sigma^{n, m}\) and \(v^{n, m}\) satisfy a linear growth condition with a linear growth constant independent of \(m\).
	Hence, an argument based on Doob's inequality and Gronwall's lemma (see the proof of Lemma \ref{lem: bounded exp} below) yields that a.s. \(\tau_m \circ Y^m \to \infty\) as \(m \to \infty\).
	Finally, we conclude that the process
	\[
	Y_t \triangleq Y^1_t \1 \{t < \tau_1 \circ Y^1\} + \sum_{k = 2}^\infty Y^k_t \1 \{\tau_{k - 1} \circ Y^{k-1} \leq t < \tau_k \circ Y^k\}, \quad t \geq 0, 
	\]
	solves the SDE \eqref{eq: SDE loc L}.
	Now, \cite[Theorem 3.13]{criens18} yields the claim.
	
	We stress that \(\bar{b}^n\) is well-defined due to Lemma \ref{lem: 2}.
	To see this, recall that there exists an \(\epsilon > 0\) such that \(h(x) =  x\) for all \(x \in \mathbb{B} \colon \|x\bn \leq \epsilon\). 
	Note that for \(x \in \mathbb{K}\) with \(\|x\bn \geq \epsilon\) we have \(\|x\kn \geq \|\iota\|_o^{-1} \|x\bn \geq \|\iota\|^{-1}_o \epsilon \triangleq \delta\).	
	Thus, for all \(x \in \mathbb{K}, y \in E\) we have 
	\begin{align*}
	\|h(v(x, y)) - v(x, y)\kn &= \| h(v(x, y)) - v(x, y)\kn \1_{\{\|v(x, y)\kn \geq \delta\}}
	\\&\leq \big(\tfrac{\|h\|_\infty}{\delta} + 1 \big) \|v(x, y)\kn \1_{\{\|v(x, y)\kn \geq \delta\}}
	\\&\leq \tfrac{1}{\delta} \big(\tfrac{\|h\|_\infty}{\delta} + 1 \big) \|v(x, y)\kn^2.
	\end{align*}
	Let \(l > 0\) be as in Lemma \ref{lem: 2} and set \(l' \triangleq \frac{l}{\delta} (\tfrac{\|h\|_\infty}{\delta} + 1 )\). We obtain for all bounded \(G \subset \mathbb{K}\) that
	\[
	\sup_{x \in G} \|\bar{b}^n(x)\kn \leq \sqrt{l} \sup_{x \in G} \big(1 + \|x\kn\big) + l' \sup_{x \in G} \big(1 + \|x\kn^2\big) \int_E\gamma^2(y)F(\dd y) < \infty, 
	\]
	which shows that \(\bar{b}^n\) is well-defined.
\end{proof}
Define by \(P^n (\dd \omega) \triangleq \bar{P}^n \big( (\iota X_t)_{t \geq 0} \in \dd \omega\big)\) a probability measure on \((\Omega, \mathcal{F})\), where \(X\) denotes the coordinate process on \(\Omega^o\).
\begin{lemma}
For each \(n \in \mathbb{N}\) the probability measure \(P^n\) solves the MP \((0, \tilde{b}^{n}, \tilde{a}^n, \tilde{K}^n, \eta \circ \iota^{-1})\), where for all \(x \in \mathbb{B}\)
\begin{align*}
\tilde{b}^{n} (x) &\triangleq b^n(x) + \int \big( h(v^n(y, x)) - v^n(y, x)\big)F(\dd y),\\ \tilde{a}^n (x)&\triangleq \sigma^n (x) \sigma^n (x)^*, \\ \tilde{K}^n(x, G) &\triangleq \int \1_{G\backslash \{0\}} (v^n(y, x)) F(\dd y),\quad G \in \mathcal{B}(\mathbb{B}).\end{align*}
Here, \(\sigma^n(x)^* \in L(\mathbb{B}, \mathbb{H})\) denotes the adjoint of \(\sigma^n(x)\in L (\mathbb{H}, \mathbb{B})\).
\end{lemma}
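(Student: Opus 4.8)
The plan is to obtain $P^n$ as the image of $\bar{P}^n$ under the continuous map $\iota_* \colon \Omega^o \to \Omega$, $\iota_*(\omega) \triangleq (\iota\, \omega(t))_{t \geq 0}$, and to transport the two defining properties of Definition \ref{def: MP} along $\iota_*$. Since $P^n = \bar{P}^n \circ \iota_*^{-1}$, the initial law is immediate: $P^n \circ X_0^{-1} = (\bar{P}^n \circ X_0^{-1}) \circ \iota^{-1} = \eta \circ \iota^{-1}$. Because $\iota$ is injective, $\iota_*^{-1}\mathscr{F}_t$ coincides with the natural filtration of the coordinate process on $\Omega^o$, so a process $N$ on $\Omega$ is a local $P^n$-martingale (for $\F$) if and only if $N \circ \iota_*$ is a local $\bar{P}^n$-martingale (for the natural filtration of $\Omega^o$). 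Hence, by Proposition \ref{prop: A2} in Appendix \ref{app: 1}, it suffices to prove that $M^f \circ \iota_*$ is a local $\bar{P}^n$-martingale for every $f \in \mathcal{D}$. (That $(\tilde{b}^n, \tilde{a}^n, \tilde{K}^n)$ satisfies the standing admissibility conditions on $\mathbb{B}$ is contained in Lemmas \ref{lem: 2} and \ref{eq: loc bdd}.)

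Next I would fix $f = g(\la \cdot, y \bs) \in \mathcal{D}$, where $g \in C^2_c(\mathbb{R})$ and $y \in \mathbb{B}$ (recall $D(0^*) = \mathbb{B}^* \cong \mathbb{B}$), and let $\iota^* \colon \mathbb{B} \to \mathbb{K}$ be the Hilbert adjoint of $\iota$. From $\la \iota x, y \bs = \la x, \iota^* y \ks$ for $x \in \mathbb{K}$ one reads off $f \circ \iota = g(\la \cdot, \iota^* y \ks)$, an admissible test function for the MP $(0, \bar{b}^n, \bar{a}^n, \bar{K}^n, \eta)$ on $\mathbb{K}$ since $\iota^* y \in \mathbb{K}$. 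As $\bar{P}^n$ solves that MP, the process $M^{f \circ \iota} \triangleq (f \circ \iota)(X) - (f \circ \iota)(X_0) - \int_0^\cdot \bar{\mathcal{K}}^n (f \circ \iota)(X_{s-}) \dd s$ is a local $\bar{P}^n$-martingale; here $X$ is the coordinate process on $\Omega^o$ and $\bar{\mathcal{K}}^n$ denotes the operator from \eqref{K} associated with $(0, \bar{b}^n, \bar{a}^n, \bar{K}^n)$. Since $\iota$ is continuous, $(\iota_*\omega)(s-) = \iota\, \omega(s-)$ and $(f \circ \iota)(\omega(t)) = f(\iota\, \omega(t))$, so $M^f \circ \iota_*$ and $M^{f \circ \iota}$ coincide provided the pointwise generator identity
\begin{align}\label{eq: gen id}
\tilde{\mathcal{K}}^n f (\iota x) = \bar{\mathcal{K}}^n (f \circ \iota)(x) \qquad \text{for all } x \in \mathbb{K}
\end{align}
holds, where $\tilde{\mathcal{K}}^n$ is the operator from \eqref{K} associated with $(0, \tilde{b}^n, \tilde{a}^n, \tilde{K}^n)$. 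Thus the whole statement reduces to \eqref{eq: gen id}.

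I would check \eqref{eq: gen id} term by term, using $b^n(\mathbb{K}) \subset \mathbb{K}$, $\sigma^n(\mathbb{K}) \subset L_2(\mathbb{H}, \mathbb{K})$ and $v^n(E \times \mathbb{K}) \subset \mathbb{K}$ (as used in the proof of Lemma \ref{lem: sde}) together with $\la \iota z, y \bs = \la z, \iota^* y \ks$ for $z \in \mathbb{K}$. The first-order term (built from the linear part) is absent on both sides since here the linear operator is $0$, and $g', g''$ are evaluated at the common point $\la \iota x, y \bs = \la x, \iota^* y \ks$. For the second-order term, $\bar{a}^n(x) = \sigma^n(x) \sigma^n(x)^*$, $\tilde{a}^n(\iota x) = \sigma^n(\iota x) \sigma^n(\iota x)^*$ and $\sigma^n(\iota x) = \iota\, \sigma^n(x)$ yield $\la \bar{a}^n(x) \iota^* y, \iota^* y \ks = \la \tilde{a}^n(\iota x) y, y \bs$. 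For the drift and jump terms I would, on each of the two spaces separately, fold the truncation correction hidden in $\bar{b}^n$ (respectively $\tilde{b}^n$) back into the jump integral; the truncation then cancels and both sides collapse to the single truncation-free expression
\[
\la b^n(\iota x), y \bs\, g'\big(\la \iota x, y \bs\big) + \int_E \Big( g\big( \la \iota x + v^n(y', x), y \bs \big) - g\big(\la \iota x, y \bs\big) - \la v^n(y', x), y \bs\, g'\big(\la \iota x, y \bs\big) \Big) F(\dd y'),
\]
which is absolutely convergent because $g \in C^2_c(\mathbb{R})$ and, by Lemma \ref{lem: 2} together with $\int_E \gamma^2(y') F(\dd y') < \infty$, the map $y' \mapsto v^n(y', x)$ is square-integrable with respect to $F$ (the integrand is of order $\|v^n(y', x)\kn^2$). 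As $b^n(\iota x) = b^n(x)$ and $v^n(y', \iota x) = v^n(y', x)$, both sides of \eqref{eq: gen id} equal this expression, which proves \eqref{eq: gen id} and hence the lemma.

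I expect the one genuine obstacle to be the bookkeeping of the two truncation functions — the one implicit in $\bar{\mathcal{K}}^n$ and $\bar{b}^n$ on $\mathbb{K}$ and the one implicit in $\tilde{\mathcal{K}}^n$ and $\tilde{b}^n$ on $\mathbb{B}$. This is precisely what forces the drift correction to be re-absorbed into the jump integral before the comparison: once that is done the truncation disappears altogether and no compatibility between the two truncations is required. The remaining ingredients — absolute convergence of the correction integrals, the $\mathbb{K}$-valuedness of $h(v^n(y', x)) - v^n(y', x)$, and the inclusions recalled above — are already available from the proof of Lemma \ref{lem: sde} and from Lemma \ref{lem: 2}.
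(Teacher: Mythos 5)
The proposal is correct and takes essentially the same approach as the paper: it establishes the pointwise generator identity $\tilde{\mathcal{K}}^n f(\iota x)=\bar{\mathcal{K}}^n(f\circ\iota)(x)$ via the Hilbert adjoint $\iota^*$ (using hypothesis (i) to pass $b^n,\sigma^n,v^n$ between $\mathbb{K}$ and $\mathbb{B}$) and then transports the local martingale property along the pushforward $\iota_*$. The only cosmetic differences are that you work with the single-coordinate test functions $\mathcal{D}$ and invoke Proposition \ref{prop: A2}, whereas the paper writes the identity directly for $f\in\mathcal{C}$, and that you make the re-absorption of the truncation correction and the filtration transfer explicit while the paper leaves them implicit.
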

\begin{proof}
For \(x \in \mathbb{K}\) denotes by \(\sigma^n (x)^\star \in L(\mathbb{K}, \mathbb{H})\) the adjoint of \(\sigma^n(x) \in L(\mathbb{H}, \mathbb{K})\) and by \(\sigma^n(x)^* \in L(\mathbb{B}, \mathbb{H})\) the adjoint of \(\sigma^n(x) \in  L(\mathbb{H}, \mathbb{B})\). 
Recalling hypothesis (i), for all \(x \in \mathbb{K}\) and \(y, z \in \mathbb{B}\) we obtain that
\begin{align*}
\langle \sigma^n(\iota x) \sigma^n(\iota x)^* z, y\rangle_\mathbb{B} 
& = \langle \sigma^n(x) \sigma^n(x)^* z, \iota^* y\rangle_\mathbb{K} 
\\&= \langle \sigma^n(x)^* z, \sigma^n(x)^\star i^* z \rangle_\mathbb{H} \\&= \langle z, \sigma^n(x) \sigma^n(x)^\star i^* y\rangle_\mathbb{B}
\\&= \langle \iota^* z, \sigma^n(x) \sigma^n(x)^\star i^* y\rangle_\mathbb{K}
\\&= \langle \sigma^n(x) \sigma^n(x)^\star i^* z, \iota^* y\rangle_\mathbb{K}.
\end{align*}
Denote by \(X\) the coordinate process on \(\Omega^o\), take \(f = g(\langle \cdot, y_1\rangle_\mathbb{B}, \dots, \langle \cdot, y_n\rangle_\mathbb{B}) \in \mathcal{C}\)
and define \(f^* \triangleq g(\langle \cdot, \iota^* y_1\rangle_\mathbb{K}, \dots, \langle \cdot, \iota^* y_n\rangle_\mathbb{K})\). We note that
\begin{align*}
\sum_{i = 1}^n &\langle b(\iota X), y_i\rangle_\mathbb{B} \partial_i f (\iota X) + \frac{1}{2} \sum_{i = 1}^n\sum_{j = 1}^n \langle \sigma^n (\iota X) \sigma^n(\iota X)^* y_i,  y_j\rangle_\mathbb{B} \partial_{ij}^2 f (\iota X) \\&\quad\qquad\qquad\qquad + \int_E \Big(f (\iota X + v(\iota X, z)) - f (\iota X) - \sum_{i = 1}^n \langle v(\iota X, z), y_i \rangle_\mathbb{B} \partial_i f(\iota X)\Big) F(\dd z)
\\&= \sum_{i = 1}^n \langle b(X), \iota^* y_i\rangle_\mathbb{K} \partial_i f^* (X) + \frac{1}{2} \sum_{i = 1}^n\sum_{j = 1}^n \langle \sigma^n (X) \sigma^n(X)^\star \iota^* y_i, \iota^* y_j\rangle_\mathbb{K} \partial_{ij}^2 f^* (X) \\&\quad\qquad\qquad\qquad+ \int_E \Big(f^* (X + v(X, z)) - f^* (X) - \sum_{i = 1}^n \langle v(X, z), \iota^* y_i \rangle_\mathbb{K} \partial_i f^*(X)\Big) F(\dd z).
\end{align*}
Now, the claim follows from the definition of the martingale problem and Lemma \ref{lem: sde}. We stress that a similar argument as used for \(\bar{b}^n\) in the proof of Lemma \ref{lem: sde} shows that \(\tilde{b}^n\) is well-defined, see Lemma \ref{eq: loc bdd}. 
\end{proof}

\begin{lemma}\label{lem: 4}
	The sequence \((P^n)_{n \in \mathbb{N}}\) is tight.
\end{lemma}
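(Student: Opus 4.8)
The plan is to verify the two conditions of Proposition~\ref{prop: gen tight} for the sequence $(P^n)_{n \in \en}$. It is convenient to argue on the probability space carrying the process $Y^n$ constructed in the proof of Lemma~\ref{lem: sde}: recall that $Y^n$ solves the SDE~\eqref{eq: SDE loc L}, that $\bar P^n = \Law(Y^n)$ as a probability measure on $\Omega^o$, and hence that $P^n = \Law(\iota Y^n)$ on $\Omega$; write $\E$ for expectation on this space. Since $\iota$ is continuous, $\|x\bn \le \|\iota\|_o\|x\kn$ for $x \in \mathbb{K}$, so it suffices to control $\|Y^n\kn$. The crucial input is a uniform-in-$n$ second moment estimate: because $b^n,\sigma^n,v^n$ satisfy the linear growth bounds of Lemma~\ref{lem: 2} with a constant that does \emph{not} depend on $n$, a standard argument via Doob's maximal inequality, the isometry of the stochastic integral and Gronwall's lemma (see Lemma~\ref{lem: bounded exp} below) yields, for every $T \ge 0$ and $k \in \en$, a finite constant $C(T,k)$, independent of $n$, with
\begin{align}\label{eq: unif mom bnd}
\E\Big[\sup_{s \le T}\|Y^n_s\kn^2\,\1_{\{\|Y^n_0\kn \le k\}}\Big] \le C(T,k), \qquad n \in \en.
\end{align}
The localization on $\{\|Y^n_0\kn \le k\}$ is forced on us since no integrability is assumed for $\eta$; it is harmless because $\Law(Y^n_0) = \eta$ does not depend on $n$ and $\eta(\{x \in \mathbb{K}\colon \|x\kn > k\}) \to 0$ as $k \to \infty$.

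For condition~(i) of Proposition~\ref{prop: gen tight}, fix $t \ge 0$ and $\epsilon > 0$. Pick $k \in \en$ with $\eta(\{\|x\kn > k\}) \le \tfrac\epsilon2$ and then $R > 0$ with $R^{-2}C(t,k) \le \tfrac\epsilon2$. By Lemma~\ref{lem: emb comp} the image $\iota(\{x \in \mathbb{K}\colon \|x\kn \le R\})$ is relatively compact in $\mathbb{B}$; let $C(t,\epsilon)$ be its closure. Then $\{\iota Y^n_t \notin C(t,\epsilon)\} \subseteq \{\|Y^n_t\kn > R\}$, so splitting along $\{\|Y^n_0\kn \le k\}$ and using Markov's inequality together with \eqref{eq: unif mom bnd} gives, for every $n$,
\begin{align*}
P^n(X_t \notin C(t,\epsilon)) \le \eta(\{\|x\kn > k\}) + R^{-2}\,\E\big[\|Y^n_t\kn^2\,\1_{\{\|Y^n_0\kn \le k\}}\big] \le \epsilon ,
\end{align*}
which yields condition~(i) (even with $\sup$ in place of $\limsup$).

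For condition~(ii), let $\epsilon > 0$, $M \in \en$ and let $(\rho_n,h_n)_{n \in \en}$ be as in the statement, so $\rho_n \le M$ and $h_n \to 0$; we may assume $h_n \le 1$. Since $\iota$ is continuous, $\rho_n$ corresponds to a stopping time $\bar\rho_n \le M$ of the filtration generated by $Y^n$, and $X_{\rho_n + h_n} - X_{\rho_n}$ under $P^n$ has the same law as $\iota(Y^n_{\bar\rho_n + h_n} - Y^n_{\bar\rho_n})$. By \eqref{eq: SDE loc L}, the increment $Y^n_{\bar\rho_n+h_n} - Y^n_{\bar\rho_n}$ is the sum of a drift integral, a stochastic integral against $W$ and one against $p - q$, all over $(\bar\rho_n,\bar\rho_n+h_n]$, and I would estimate the second $\mathbb{K}$-moment of each summand on $\{\|Y^n_0\kn \le k\}$: Cauchy--Schwarz and $\|b^n(x)\kn \le \sqrt l\,(1 + \|x\kn)$ give a bound of order $h_n^2(1+C(M+1,k))$ for the drift; the isometry of the stochastic integral applied between the bounded stopping times $\bar\rho_n$ and $\bar\rho_n+h_n$, together with $\|\sigma^n(x)\|_{\textup{HS}(\mathbb{K})}^2 \le l(1+\|x\kn^2)$, gives $\le l\,h_n(1+C(M+1,k))$ for the Brownian part; and, since $p - q$ has compensator $\dd t \otimes F$, the analogous identity with $\|v^n(y,x)\kn^2 \le l\gamma^2(y)(1+\|x\kn^2)$ gives $\le l\big(\int_E \gamma^2(y)F(\dd y)\big)h_n(1+C(M+1,k))$, which is finite by hypothesis~(iii); all three identities follow from a routine localization using \eqref{eq: unif mom bnd} and the fact that $Y^n$ does not explode on $\{\|Y^n_0\kn \le k\}$. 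Summing and using $h_n \le 1$ produces a constant $\widetilde C(M,k)$, independent of $n$, with $\E[\|Y^n_{\bar\rho_n+h_n}-Y^n_{\bar\rho_n}\kn^2\,\1_{\{\|Y^n_0\kn \le k\}}] \le \widetilde C(M,k)\,h_n$. Hence, by $\|\cdot\bn \le \|\iota\|_o\|\cdot\kn$ and Markov's inequality,
\begin{align*}
P^n\big(\|X_{\rho_n+h_n} - X_{\rho_n}\| \ge \epsilon\big) \le \eta(\{\|x\kn > k\}) + \frac{\|\iota\|_o^2}{\epsilon^2}\,\widetilde C(M,k)\,h_n ,
\end{align*}
so $\limsup_{n \to \infty} P^n(\|X_{\rho_n+h_n} - X_{\rho_n}\| \ge \epsilon) \le \eta(\{\|x\kn > k\})$; letting $k \to \infty$ gives $0$. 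Thus Proposition~\ref{prop: gen tight} applies and $(P^n)_{n \in \en}$ is tight.

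The main obstacle is the uniform moment bound \eqref{eq: unif mom bnd}: one has to run the Doob--Gronwall argument while keeping the constants independent of $n$ --- which is precisely where the $n$-independence of the linear growth constant in Lemma~\ref{lem: 2} matters, as opposed to the $n$-dependent Lipschitz constants of Lemma~\ref{lem: 1} that are only needed for existence --- and while localizing on $\{\|Y^n_0\kn \le k\}$ to circumvent the absence of a moment assumption on $\eta$. The remaining work (transporting stopping times and path increments between $\Omega$, $\Omega^o$ and the space of $Y^n$, and justifying the stochastic-integral identities by localization) is routine.
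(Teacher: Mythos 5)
Your proof is correct and follows essentially the same route as the paper's: you verify Aldous's criterion (Proposition~\ref{prop: gen tight}) by means of a uniform-in-$n$ second-moment bound on $\|Y^n\kn$ localized on a set where $Y^n_0$ is $\mathbb{K}$-bounded, obtained from Doob--Gronwall and the $n$-independent linear growth constants of Lemma~\ref{lem: 2}, and then transfer tightness to $\mathbb{B}$ via the compact embedding $\iota$. The only cosmetic difference is that you run the increment estimates directly on the SDE representation of $Y^n$, whereas the paper carries out the equivalent computation on $(\Omega^o,\bar P^n)$ via the martingale-problem characteristics (\cite[Lemma 4.7]{criens18} and \cite[Prop.~II.1.28, Thms.~II.1.33, II.2.34]{JS}); since $\bar P^n=\Law(Y^n)$ these are the same estimates.
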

\begin{proof}
	We start with a moment bound. Fix \(\epsilon > 0\). Because any Borel probability measure on a Polish space is tight, we find a compact set \(K \subset \mathbb{K}\) such that \(\eta (K) \geq 1-  \frac{\epsilon}{2}\).
	Define \(Z \triangleq \1 \{X_0 \in K\}\), which is a random variable on \((\Omega^o, \mathcal{F}^o)\).
	\begin{lemma}\label{lem: bounded exp}
		For all \(T \in \mathbb{R}_+\) we have
			\(\sup_{n \in \mathbb{N}}  \bar{E}^n \big[\sup_{t \in [0, T]}\|X_{t}\|^2_\mathbb{K} Z \big] < \infty\). 
	\end{lemma}
	\begin{proof}
		For \(m \in \mathbb{N}\) let \(\tau_m\) be defined as in \eqref{eq: tau}. Fix \(t \in [0, T]\).
		Using Doob's and H\"older's inequality, \cite[Lemma 4.7]{criens18} and \cite[Proposition II.1.28, Theorems II.1.33, II.2.34]{JS} we obtain that 
		\begin{align*}
		\bar{E}^n \Big[ \sup_{s \in [0, t]} \langle X_{s \wedge \tau_m}, e_k\ks^2 Z \Big]
		\leq 16\ \Big( \bar{E}^n \Big[ Z &\int_0^{t \wedge \tau_m} \langle \bar{a}^n(X_s) e_k, e_k\ks \dd s + TZ \int_0^{t \wedge \tau_m} \langle b^n(X_s), e_k\ks^2 \dd s \Big]   \\&+ \bar{E}^n\Big[ Z\int_0^{t \wedge \tau_m} \int_{\mathbb{K}} \langle x, e_k\ks^2 \bar{K}^n(X_s, \dd x) \dd s  + Z\langle X_0, e_k\ks^2\Big]\Big).
		\end{align*}
		Using the Parseval identity, we obtain 
		\begin{align*}
		\bar{E}^n \Big[ \sup_{s \in [0, t]}\| X_{s \wedge \tau_m}\kn^2 Z \Big] &\leq \sum_{k = 1}^\infty \bar{E}^n \Big[ \sup_{s \in [0, t]} \langle X_{s \wedge \tau_m}, e_k\ks^2 Z \Big] \\&
	\leq 16\ \Big( \bar{E}^n \Big[ Z \int_0^{t \wedge \tau_m} \textup{trace}_\mathbb{K} \bar{a}^n(X_s) \dd s + TZ \int_0^{t \wedge \tau_m} \| b^n(X_s)\kn^2 \dd s \Big]   \\&\hspace{2.5cm}+ \bar{E}^n\Big[ Z\int_0^{t \wedge \tau_m} \int_{\mathbb{K}} \| x\kn^2 \bar{K}^n(X_s, \dd x) \dd s  + Z\|X_0\kn^2\Big]\Big).
		\end{align*}
Due to Lemma \ref{lem: 2}, we find two constants \(c_1, c_2 > 0\) independent of \(n, m\) and \(t\), such that
		\[
		\bar{E}^n \Big[ \sup_{s \in [0, t]} \|X_{s \wedge \tau_{m}}\kn^2 Z \Big] \leq c_1 + c_2 \int_0^t \bar{E}^n \Big[ \sup_{s \in [0, r]} \|X_{s \wedge \tau_{m}}\kn^2 Z\Big] \dd r.
		\]
		Gronwall's lemma yields that 
		\[
		\bar{E}^n \Big[ \sup_{t \in [0, T]}\|X_{t \wedge \tau_{m}}\kn^2 Z \Big] \leq c_1 e^{c_2 T}.
		\]
		Finally, Fatou's lemma yields that 
		\[
		\bar{E}^n \Big[ \sup_{s \in [0, T]} \|X_{t}\kn^2 Z \Big] \leq \liminf_{m \to \infty} \bar{E}^n \Big[ \sup_{t \in [0, T]} \|X_{t \wedge \tau_{m}}\kn^2 Z \Big] \leq c_1 e^{c_2 T}.
		\]
		This completes the proof.
	\end{proof}
Fix \(t \in \mathbb{R}_+\).
	Using Chebyshev's inequality, we obtain that 
	\[
	\bar{P}^n \big(\|X_t\kn Z \leq R \big) \geq 1 - \frac{\sup_{n \in \mathbb{N}} \bar{E}^n \big[\|X_t\kn^2 Z\big]}{R^2}.
	\]
	Due to Lemma \ref{lem: bounded exp}, we find \(R^* > 0\) such that 
	\[
	\inf_{n \in \mathbb{N}} \bar{P}^n \big(\|X_t\kn Z \leq R^* \big) \geq 1 - \tfrac{\epsilon}{2}.
	\]
	Set 
	\[
	K_1 \triangleq \textup{cl}_\mathbb{B} \big\{\iota x \colon x \in \mathbb{K} \text{ and } \|x\kn \leq R^* \big\} \subset \mathbb{B}.
	\]
	Due to Lemma \ref{lem: emb comp}, the embedding \(\iota\) is compact and, consequently, the set \(K_1\) is compact in \(\mathbb{B}\). We obtain 
	\begin{align*}
	P^n \big(X_t \in K_1\big) &\geq \bar{P}^n \big(\iota X_t \in K_1, X_0 \in K \big) 
	\\&\geq \bar{P}^n \big(\| X_t \kn Z \leq R^*\big) - 1 + \iota(K)
	\\&\geq \bar{P}^n \big(\|X_t\kn Z \leq R^* \big) - \tfrac{\epsilon}{2}
	\\&\geq 1 - \epsilon.
	\end{align*}
	This shows that \((P^n \circ X_t^{-1})_{n \in \mathbb{N}}\) is tight for all \(t \in \mathbb{R}_+\). In other words, (i) in Proposition \ref{prop: gen tight} holds.
	
	Let \(M > 0\) and \((\rho_n)_{n \in \mathbb{N}}\) be a sequence of stopping times on \(\Omega\) such that \(\sup_{n \in \mathbb{N}} \rho_n \leq M\). Moreover, let \((h_n)_{n \in \mathbb{N}} \subset \mathbb{R}_+\) be a sequence such that \(h_n \to 0\) as \(n \to \infty\).
	We can consider \((\rho_n)_{n \in \mathbb{N}}\) as a sequence of stopping times on \(\Omega^o\) by identifying \(\rho_n\) with \(\rho_n \circ \iota\).

	Let \(\epsilon > 0, K \subset \mathbb{K}\) and \(Z\) be as above.
	We can argue as in the proof of Lemma \ref{lem: bounded exp} and obtain that
	\begin{align*}
	\bar{E}^n \big[ &\|X_{\tau_m \wedge (\rho_n + h_n)} - X_{\tau_m \wedge \rho_n}\kn^2 Z \big] \\&\leq 12\ \Big( \bar{E}^n \Big[ Z\int_{\tau_m \wedge \rho_n}^{\tau_m \wedge (\rho_n + h_n)} \textup{ trace}_\mathbb{K} \bar{a}^n(X_s) \dd s  + Z\int_{\tau_m \wedge \rho_n}^{\tau_m \wedge (\rho_n + h_n)} \int_{\mathbb{K}} \| x \kn^2 \bar{K}^n(X_s, \dd x) \dd s  \Big] \\&\qquad\qquad + \bar{E}^n \Big[ h_nZ \int_{\tau_m \wedge \rho_n}^{\tau_m \wedge (\rho_n + h_n)} \| b^n(X_s)\kn^2 \dd s \Big]\Big).
	\end{align*}
	Thus, due to Lemmata \ref{lem: 2} and \ref{lem: bounded exp}, we find two constants \(c_1, c_2 > 0\) independent of \(n\) and \(m\) such that 
	\[
	\bar{E}^n \big[ \|X_{\tau_m \wedge (\rho_n + h_n)} - X_{\tau_m \wedge \rho_n}\kn^2Z \big] \leq c_1 h_n + c_2 h^2_n.
	\]
	Furthermore, Fatou's lemma yields that 
	\[
	\bar{E}^n \big[ \|X_{\rho_n + h_n} - X_{\rho_n}\kn^2Z \big] \leq c_1 h_n + c_2 h^2_n.
	\]
	Fix \(\delta > 0\) and set \(\bar{\delta} \triangleq \delta \|\iota\|_o^{-1}\). By Chebyshev's inequality, we have  
	\begin{align*}
	\bar{P}^n \big(\|X_{\rho_n + h_n} - X_{\rho_n}\kn \geq \bar{\delta}\big) &\leq \bar{P}^n \big(\|X_{\rho_n + h_n} - X_{\rho_n}\kn Z \geq \bar{\delta}\big) + \eta(K^c)
	\\&\leq \frac{\bar{E}^n \big[ \|X_{\rho_n + h_n} - X_{\rho_n}\kn^2Z \big]}{\bar{\delta}^2} + \frac{\epsilon}{2}
	\\&\leq \frac{c_1 h_n + c_2 h^2_n}{\bar{\delta}^2} + \frac{\epsilon}{2}.
	\end{align*}
	Because \(h_n \to 0\) as \(n \to \infty\), we find an \(N \in \mathbb{N}\) such that 
	\[
	\bar{P}^n \big(\|X_{\rho_n + h_n} - X_{\rho_n}\kn \geq \bar{\delta}\big) < \epsilon
	\]
	for all \(n \geq N\).
	We conclude that
	\[
	 P^n \big(\|X_{\rho_n + h_n} - X_{\rho_n}\bn \geq \delta\big) \leq \bar{P}^n \big(\|X_{\rho_n + h_n} - X_{ \rho_n}\kn \geq \bar{\delta}\big) < \epsilon
	\]
	for all \(n \geq N\),
	i.e. (ii) in Proposition \ref{prop: gen tight} holds. This completes the proof.
\end{proof}
Define \(\mathcal{K}\) as in \eqref{K} with \(A = 0\) and \(b\) replaced by \(\mu\) as given in the statement of Theorem \ref{prop: existence} and let \(\mathcal{K}^n\) as in \eqref{K} with \(A = 0\) and \(b\) replaced by \(\tilde{b}^n\), \(a\) replaced by \(\tilde{a}^n\) and \(K\) replaced by \(\tilde{K}^n\).

For all \(f = g(\langle \cdot, y^*\bs) \in \mathcal{D}\), Taylor's theorem yields that 
\[
\big|f(x + v(x, y)) - f(x) - \partial f (x) \langle v(x, y), y^*\bs \big| \leq \tfrac{1}{2} \|g''\|_\infty \|y^*\|^2_\mathbb{B} \|v(x, y)\|^2_\mathbb{B}, \qquad y \in E, x \in \mathbb{B}.
\]
Thus, due to the continuity assumptions on \(b, \sigma\) and \(v\), \eqref{eq: loc bdd B} and the dominated convergence theorem the map
\[
\mathbb{B} \ni x \mapsto \mathcal{K} f(x)
\]
is continuous for all \(f \in \mathcal{D}\). 
Let \(K \subset \mathbb{B}\) be compact and \(f = g(\langle \cdot, y^*\bs) \in \mathcal{D}\).
Note that for all \(x \in \mathbb{B}\) and \(y \in E\)
\begin{align*}
K^n (x, y) \triangleq \big|f(x + v(x, y)) &- f(x + v^n(x, y)) - \partial f (x) \langle v(x,y) - v^n(x, y), y^*\bs \big|
\\&\leq \|g'\|_\infty \big|\langle v(x, y) - v^n(x, y), y^*\bs\big| + \|g'\|_\infty \big| \langle v(x, y) - v^n(x, y), y^*\bs\big|
\\&\leq 2 \|g'\|_\infty \|y^*\bn \|v(x, y) - v^n(x, y)\bn.
\end{align*}
Thus, Lemma \ref{lem: 3} yields that for all \(y \in E\) 
\begin{align}\label{eq: pw conv}
\sup_{x \in K}  K^n(x, y)  \to 0 \text{ as } n \to \infty.
\end{align}
Due to Taylor's theorem, we have for all \(x \in \mathbb{B}\) and \(y \in E\)
\begin{align*}
K^n(x, y)  &\leq \tfrac{1}{2} \|g''\|_\infty \|y^*\bn^2 \big( \|v (x, y) \bn^2 + \|v^n(x, y)\bn^2\big).
\end{align*}
Consequently, recalling Lemma \ref{lem: 3}, hypothesis (iii) and \eqref{eq: pw conv},  we deduce from the dominated convergence theorem that 
\[
\int_E\sup_{x \in K} K^n(x, y) F(\dd y) \to 0 \text{ as } n \to \infty.
\]
Using again Lemma \ref{lem: 3}, we obtain that
\begin{equation*}\begin{split}
\sup_{x \in K}\big| \mathcal{K} f (x) - \mathcal{K}^n f(x)\big|  &\leq \|g'\|_\infty \|y^*\bn \sup_{x \in K} \|b^n (x) - b(x)\bn \\&\qquad + \tfrac{1}{2} \|g''\|_\infty \|y^*\bn^2 \sup_{x \in K} \|\sigma^n (x) - \sigma (x)\|_{L(\mathbb{B})}^2 \\&\qquad+ \int_E \sup_{x \in K} K^n(x,y) F(\dd y) \\&\to 0 \text{ as } n \to \infty.
\end{split}
\end{equation*}
Using again Taylor's theorem, Lemma \ref{eq: loc bdd} yields that 
\begin{align*}
\sup_{n \in \mathbb{N}} \sup_{\|x\bn \leq m} \big| \mathcal{K}^n f (x) \big| &\leq \|g'\|_\infty \|y^* \bn \sup_{n \in \mathbb{N}} \sup_{\|x\bn \leq m} \|b^n(x)\bn 
\\&\qquad+ \tfrac{1}{2} \|g''\|_\infty \|y^*\bn^2 \sup_{n \in \mathbb{N}} \sup_{\|x\bn \leq m} \|\sigma^n(x)\|_{L(\mathbb{B})}^2 
\\&\qquad + \tfrac{1}{2} \|g''\|_\infty \|y^*\bn^2 \sup_{n \in \mathbb{N}} \sup_{\|x\bn \leq m} \int_E \|v^n(x, y)\bn^2 F(\dd y) < \infty.
\end{align*}
 We conclude from Corollary \ref{coro: existence} and Lemma \ref{lem: 4} that the MP \((0, \mu, a, K, \eta \circ \iota^{-1})\) has a solution.
\qed

\subsection{Proof of Proposition \ref{prop: tight}}\label{proof: prop1}
For \(T \in (0, \infty)\) let \(\Omega^T\) be the space of continuous functions \([0, T] \to \mathbb{B}\), let \(X^T\) be the coordinate process on \(\Omega^T\) and set \(\mathcal{F}^T \triangleq \sigma (X^T_t, t \in [0, T])\). 

Due to \cite[Corollary 5]{whitt1970}, the sequence \((P^n \circ X_{\cdot \wedge \tau_m}^{-1})_{n \in \mathbb{N}}\) is tight if and only if for all \(T \in \mathbb{N}\) its restriction to \((\Omega^T, \mathcal{F}^T)\) is tight.

We fix \(T \in \mathbb{N}\).
Our strategy is to adapt the compactness method from \cite{doi:10.1080/17442509408833868}. 
In the following we fix \(n \in \mathbb{N}\) and work with the filtered probability space \((\Omega, \mathscr{F}, \F, P^n)\).
Due to \cite[Theorem 3.6]{EJP2924}, we find a cylindrical standard Brownian motion \(W\) (possibly on an extension of \((\Omega, \mathscr{F}, \F, P^n)\)) such that 
\[
X_t = S_t X_0 + \int_0^t S_{t - s} b^n(X_s)\dd s + \int_0^t S_{t-s} \sigma^n(X_s) \dd W_s, \quad t \in [0, T].
\]
Let \(\varepsilon > 0\) be fixed. 
Because  \((\eta^n)_{n  \in  \mathbb{N}}\) is tight, there exists a compact set \(K \subset \mathbb{B}\) such that 
\[
\inf_{n \in \mathbb{N}} \eta^n (K) \geq 1 - \tfrac{\varepsilon}{2}.
\]
Let \(m^* > 0\) be a constant such that \(\|x\| \leq m^*\) for all \(x \in K\).

In the cases (ii) and (iv), let \(\theta \triangleq \lambda\) and choose \(p > 2\) such that \(\tfrac{1}{p} < \theta\). In the other cases (i) and (iii) take \(p > 2\) and choose \(\theta \in (0, \frac{1}{2})\) such that \(\tfrac{1}{p} < \theta\).

In the cases (iii) and (iv), the linear growth conditions and a standard argument based on Gronwall's lemma (see \cite[Theorems 7.2 and 7.5]{DePrato}) yields that
\[
\sup_{s \in [0, T]} E^{P^n}\big[ \1_K (X_0) \|X_{s \wedge \tau_\infty}\|^p\big] = \sup_{s \in [0, T]} E^{P^n}\big[ \1_K(X_0) \|X_s\|^p\big] \leq C_{p, T}, 
\]
where \(C_{p, T} \in (0, \infty)\) is a constant independent of \(n\). 
Note that 
\begin{equation}\label{eq: b ineq}
\begin{split}
E^{P^n} \Big[ \1_K(X_0) &\int_0^T \|b^{n}(X_{s \wedge \tau_m})\|^p \dd s \Big] \\&\leq \begin{cases}
T\cdot\big(\sup_{k \in \mathbb{N}} \sup_{\|x\| \leq m \vee m^*} \|b^k(x)\|\big)^p,&\text{if (i) or (ii) holds},\\
TK^p 2^{p} \big(1 + C_{p, T}\big),&\text{if (iii) or (iv) holds}.
\end{cases}
\end{split}
\end{equation}

Let \(L^p([0, T], \mathbb{B})\) be the space of all Borel functions \(f \colon [0, T]\to \mathbb{B}\) such that \(\int_0^T \|f(s)\|^p \dd s < \infty\).
For \(f \in L^p([0, T], \mathbb{B})\), we define  
\[
(G_{\xi} f)(t)  \triangleq \int_0^{t} (t - s)^{\xi - 1} S_{t - s} f(s) \dd s,\quad \xi \in (\tfrac{1}{p}, 1], t \in [0, T].
\]
Due to \cite[Proposition 8.4]{DePrato}, the operator \(G_\xi\) is compact from \(L^p ([0, T], \mathbb{B})\) into \(\Omega^T\).
We define 
\[
Y^{n, m, \theta}_s \triangleq \int_0^s (s - r)^{- \theta} S_{s-r} \sigma^{n}(X_{r \wedge \tau_m})\dd W_r,\quad s \in [0,T].
\]

In either of the cases (i) and (iii), we obtain as in the proof of \cite[Proposition 7.3]{DePrato} that there exists a constant \(\widehat{C}_{p, T} \in (0, \infty)\) such that
\begin{equation}\label{bound sigma}
\begin{split}
E^{P^n}\Big[ \1_K(X_0) &\int_0^T \|Y^{n, m, \theta}_s\|^p \dd s\Big] \\&\leq \widehat{C}_{p, T} \int_0^T E^{P^n} \Big[ \1_K(X_0) \| \sigma^{n} (X_{s \wedge \tau_m}) \|^p_\textup{HS}\Big] \dd s 
\\&\leq \begin{cases}
\widehat{C}_{p, T} T \big(\sup_{k \in \mathbb{N}}\sup_{\|x\| \leq m \vee m^*} \|\sigma^k(x)\|_\text{HS}\big)^p,&\text{if (i) holds},\\
\widehat{C}_{p, T} T K^p 2^{p} \big(1 + C_{p, T} \big),&\text{if (iii) holds}.
\end{cases}
\end{split}
\end{equation}

In the cases (ii) and (iv) we have chosen \(\theta = \lambda\) and we obtain as in the proof of \cite[Proposition 6.3.5]{roeckner15} that there exists a constant \(\widehat{C}_{p} \in (0, \infty)\) such that
\begin{equation}\label{bound sigma2}
\begin{split}
E^{P^n}\Big[ \1_K&(X_0) \int_0^T \|Y^{n, m, \theta}_s\|^p \dd s\Big] \\&\leq \widehat{C}_{p} \Big(\int_0^T (T - s)^{- 2 \lambda} \|S_{T - s}\|^2_\textup{HS} E^{P^n} \Big[ \1_K(X_0) \| \sigma^{n} (X_{s \wedge \tau_m}) \|^p_o\Big]^\frac{2}{p} \dd s \Big)^\frac{p}{2}
\\&\leq \begin{cases}
\widehat{C}_{p} \big(\int_0^T s^{- 2 \lambda}\|S_s\|^2_\textup{HS} \dd s\big)^\frac{p}{2} \big(\sup_{k \in \mathbb{N}}\sup_{\|x\| \leq m \vee m^*} \|\sigma^k(x)\|_o\big)^p,&\text{if (ii) holds},\\
\widehat{C}_{p}\big( \int_0^T s^{- 2 \lambda}\|S_s\|^2_\textup{HS} \dd s \big)^{\frac{p}{2}}K^p 2^{p} \big(1 + C_{p, T} \big),&\text{if (iv) holds}.
\end{cases}
\end{split}
\end{equation}
Recall the the r.h.s. is finite due to Lemma \ref{lem: for all conv}.

By \cite[Proposition 1]{doi:10.1080/17442508708833480}, we have the following factorization formula
\[
\int_0^t S_{t-s} \sigma^{n}(X_{s \wedge \tau_m}) \dd W_s = \tfrac{\sin (\pi \theta)}{\pi} \big(G_\theta Y^{n, m, \theta}\big) (t),\quad t \in [0, T].
\]
Thus, due to \cite[Remark A.1]{Brzeniak2005}, for all \(t \in [0, T]\)
\[
\Big(\int_0^\cdot S_{\cdot -s} \sigma^n(X_s) \dd W_s \Big) (t \wedge \tau_m) = \tfrac{\sin (\pi \theta)}{\pi} \big(G_\theta Y^{n, m, \theta}\big) (t \wedge \tau_m).
\]
Consequently, we obtain
\begin{align*}
X_{t \wedge \tau_m} 
= S_{t \wedge \tau_m} X_0 + \left(G_1 b^{n} (X_{\cdot \wedge \tau_m})\right)(t \wedge \tau_m)  + \tfrac{\sin(\pi \theta)}{\pi} \big(G_\theta Y^{n, m, \theta}\big)(t \wedge \tau_m),\quad t \in [0, T].
\end{align*}
Define \begin{align*}B_R &\triangleq \Big\{u \in L^p ([0, T], \mathbb{B}) \colon \int_0^T \|u(s)\|^p \dd s \leq R \Big\},\\
\Lambda (R, \theta, m) &\triangleq \left\{ \alpha \in \Omega^T \colon \alpha = (G_\theta u)(\cdot \wedge \tau_m(\lambda)), u \in B_R, \lambda \in \Omega\right\},
\\
\Lambda^\star (m) &\triangleq \left\{\alpha \in \Omega^T \colon \alpha = S_{\cdot \wedge \tau_m(\lambda)} x, x \in K, \lambda \in \Omega\right\},
\end{align*}
and 
\begin{align*}
\Lambda' &(R, m) 
\\&\triangleq \left\{ \alpha \in \Omega^T \colon \alpha = \alpha^1 + \alpha^2 + \tfrac{\sin(\pi \theta)}{\pi} \alpha^3,\  \alpha^1 \in \Lambda^\star (m), \alpha^2 \in \Lambda (R, 1, m), \alpha^3 \in \Lambda (R, \theta, m) \right\}.
\end{align*}
We claim that the sets \(\Lambda^\star (m), \Lambda (R, 1, m)\) and \(\Lambda (R, \theta, m)\) are relatively compact. 
To show this, we recall the following version of the Arzel\`a-Ascoli theorem (see \cite[Theorem A.2.1]{Kallenberg} and \cite[Theorem 1.8.23]{tao2010epsilon}): 
\begin{fact}\label{fact2} Fix two metric spaces \(U\) and \((V, \rho)\), where \(U\) is compact and \(V\) is complete. Let \(D\) be dense in \(U\). A set \(A \subset C(U, V)\) (the space of all continuous functions \(U \to V\) equipped with the uniform topology) is relatively compact if and only if for all \(t \in D\) the set \(\pi_t (A) \triangleq \{\alpha(t)\colon \alpha \in A\} \subset V\) is relatively compact in \(V\) and \(A\) is equicontinuous, i.e. for all \(s \in U\) and \(\epsilon > 0\) there exists a neighborhood \(N\subset U\) of \(s\) such that for all \(r \in N\) and \(\alpha \in A\)
	\[
	\rho (\alpha(r), \alpha(s)) < \epsilon.
	\]
	In that case, even \(\bigcup_{t \in U} \pi_t(A)\) is relatively compact in \(V\).
\end{fact}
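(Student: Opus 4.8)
The plan is to recognise this as the classical Arzel\`a--Ascoli theorem and give the standard argument. First I would record the ambient fact that, since \(U\) is compact, for \(f,g\in C(U,V)\) the function \(t\mapsto\rho(f(t),g(t))\) is continuous and bounded, so \(d(f,g)\triangleq\sup_{t\in U}\rho(f(t),g(t))\) is a genuine metric inducing the uniform topology; moreover a \(d\)-Cauchy sequence of continuous maps converges uniformly to a continuous map, so \((C(U,V),d)\) is a complete metric space. In a complete metric space relative compactness is equivalent to sequential relative compactness, so it suffices to show that every sequence in \(A\) has a \(d\)-convergent subsequence. I would prove the two implications separately and deduce the ``moreover'' part from joint continuity of the evaluation map.

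\emph{Necessity.} Assume \(A\) is relatively compact, so \(\overline A\) is compact. For each \(t\in U\) the evaluation \(\pi_t\colon(C(U,V),d)\to(V,\rho)\) is \(1\)-Lipschitz, hence continuous, so \(\pi_t(\overline A)\) is compact and \(\pi_t(A)\) is relatively compact --- in particular for \(t\in D\). For equicontinuity, fix \(s\in U\) and \(\varepsilon>0\), cover the compact set \(\overline A\) by finitely many \(d\)-balls \(B_d(f_i,\varepsilon/3)\), \(i=1,\dots,N\); each \(f_i\) is uniformly continuous on the compact space \(U\), so there is a neighbourhood \(N_i\) of \(s\) with \(\rho(f_i(r),f_i(s))<\varepsilon/3\) for \(r\in N_i\); then \(N\triangleq\bigcap_{i=1}^N N_i\) works for every \(\alpha\in A\) by the triangle inequality, on choosing the index \(i\) with \(d(\alpha,f_i)<\varepsilon/3\).

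\emph{Sufficiency.} Assume pointwise relative compactness on \(D\) and equicontinuity of \(A\). Since \(U\) is a compact metric space it is separable, hence so is its subspace \(D\); choose a countable dense subset \(D'\subset D\), which is then dense in \(U\). Using compactness of \(U\), upgrade the (pointwise) equicontinuity of \(A\) to uniform equicontinuity: for every \(\varepsilon>0\) there is \(\delta>0\) with \(\rho(\alpha(r),\alpha(s))<\varepsilon\) whenever \(r,s\in U\) satisfy \(d_U(r,s)<\delta\) and \(\alpha\in A\), where \(d_U\) is the metric of \(U\) (cover \(U\) by the equicontinuity neighbourhoods and take a Lebesgue number). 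Given a sequence \((f_n)\subset A\), a diagonal extraction over the countably many points of \(D'\) --- legitimate because each \(\overline{\pi_t(A)}\subset V\) is compact and \(V\) is complete --- produces a subsequence \((g_n)\) with \((g_n(t))_n\) convergent for every \(t\in D'\). To see that \((g_n)\) is \(d\)-Cauchy, fix \(\varepsilon>0\), take \(\delta>0\) from uniform equicontinuity for \(\varepsilon/3\), cover \(U\) by finitely many \(\delta\)-balls with centres \(t_1,\dots,t_M\in D'\), pick \(N\) so that \(\rho(g_n(t_j),g_m(t_j))<\varepsilon/3\) for all \(j\) and \(n,m\ge N\), and for arbitrary \(s\in U\) choose \(j\) with \(d_U(s,t_j)<\delta\); the triangle inequality gives \(\rho(g_n(s),g_m(s))<\varepsilon\), hence \(d(g_n,g_m)\le\varepsilon\). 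By completeness of \(C(U,V)\) the subsequence \((g_n)\) converges uniformly to some \(g\in C(U,V)\), so \(A\) is relatively compact.

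Finally, for the last assertion: once \(\overline A\) is compact, the evaluation map \(e\colon U\times\overline A\to V\), \(e(t,f)=f(t)\), is continuous, since \(\rho(f_n(t_n),f(t))\le d(f_n,f)+\rho(f(t_n),f(t))\to0\) whenever \(t_n\to t\) in \(U\) and \(d(f_n,f)\to0\), using continuity of \(f\); hence \(\bigcup_{t\in U}\pi_t(A)\subseteq e(U\times\overline A)\) is contained in the compact image of the compact set \(U\times\overline A\), so it is relatively compact in \(V\). The only slightly delicate points are the passage to a countable dense subset of \(D\) and the upgrade to \emph{uniform} equicontinuity via compactness of \(U\); the rest is triangle-inequality bookkeeping, and the full details may be found in \cite[Theorem A.2.1]{Kallenberg} and \cite[Theorem 1.8.23]{tao2010epsilon}.
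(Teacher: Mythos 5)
Your proof is correct; the paper itself gives no argument for this Fact but simply cites \cite[Theorem A.2.1]{Kallenberg} and \cite[Theorem 1.8.23]{tao2010epsilon}, and your argument is exactly the standard Arzel\`a--Ascoli proof found there (evaluation maps and an $\varepsilon/3$-cover for necessity; uniform equicontinuity via a Lebesgue number, a diagonal extraction over a countable dense subset of $D$, and completeness of $C(U,V)$ for sufficiency; joint continuity of evaluation for the final assertion). The only step you compress is the Lebesgue-number upgrade to uniform equicontinuity, which needs the usual $\varepsilon/2$ at each point, but that is routine and not a gap.
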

Let \(\xi \in \{\theta, 1\}\).
The set \(\Lambda (R, \xi, m)\) is relatively compact if it is equicontinuous and for all \(t \in [0, T]\) the set
\[
C_t \triangleq \{y \in \mathbb{B}\colon y = \alpha(t), \alpha \in \Lambda(R, \xi, m)\}
\]
is relatively compact in \(\mathbb{B}\).
We note that the set \(\Lambda (R, \xi, \infty)\) is relatively compact, because the operator \(G_\xi\) is compact. 
Thus, Fact \ref{fact2} yields that the set \[C \triangleq \bigcup_{t \in [0, T]}\{y \in \mathbb{B} \colon y = \alpha(t), \alpha \in \Lambda (R, \xi, \infty)\}\] is relatively compact in \(\mathbb{B}\). For all \(t \in [0, T]\) we have \(C_t \subset C\) and hence also the set \(C_t\) is relatively compact in \(\mathbb{B}\). 
Let now \(t \in [0, T]\) and \(\epsilon > 0\). 
Again due to Fact \ref{fact2}, there exists a \(\delta> 0\) such that for all \(s \in [0, T]\) with \(|t - s| \leq \delta\) we have 
\begin{align*}
\|\alpha (t) - \alpha (s)\| < \tfrac{\epsilon}{2}
\end{align*}
for all \(\alpha \in \Lambda (R, \xi, \infty)\). 
Note that 
\[
\|\alpha(t \wedge \tau_m(\lambda)) - \alpha (s \wedge \tau_m(\lambda))\| = \begin{cases} \|\alpha(\tau_m(\lambda)) - \alpha(s)\|,& t \geq \tau_m(\lambda)\geq s,\\
\|\alpha(t) - \alpha(\tau_m(\lambda))\|,&t \leq \tau_m(\lambda) \leq s,\\
\|\alpha(t) - \alpha(s)\|,&t, s \leq \tau_m(\alpha),\\
0,&t, s \geq \tau_m(\alpha).\end{cases}
\]
In the first case, we have 
\[
\|\alpha(\tau_m(\lambda)) - \alpha(s)\| \leq \|\alpha(\tau_m(\lambda)) - \alpha(t)\| + \|\alpha(t) - \alpha(s)\| < \epsilon, 
\]
because \(|t - \tau_m(\lambda)| \leq |t - s| \leq \delta\). In the second case, we have 
\[|t - \tau_m(\lambda)| = \tau_m(\lambda) - t \leq s - t \leq \delta,\] and hence 
\[
\|\alpha(t) - \alpha(\tau_m(\lambda))\| < \tfrac{\epsilon}{2} \leq \epsilon. 
\]
In the third and fourth case, the desired inequality holds trivially. Thus, 
the inequality 
\[
\|\alpha(t) - \alpha(s)\| < \epsilon 
\] holds for all \(\alpha \in \Lambda (R, \xi, m)\) and \(s \in  [0, T]\) with \(|t - s| \leq \delta\). We conclude that the set \(\Lambda(R, \xi, m)\) is equicontinuous. Now, Fact \ref{fact2}  yields that it is also relatively compact.

The relative compactness of \(\Lambda^\star (m)\) follows from the same argument, if we can show that the set \(\Lambda^\star (\infty)\) is relatively compact.
Because \(K\) and \(S_t\) for all \(t > 0\) are compact, the set
\[
\{y \in \mathbb{B} \colon y = S_t x, x \in K\},\quad t \in (0, T],
\]
is relatively compact in \(\mathbb{B}\). Since \((S_t)_{t \geq 0}\) is a \(C_0\)-semigroup, \cite[Lemma 5.2, p. 37]{engel2006one} yields that the map
\[
[0, T] \times K \ni (t, x) \mapsto S_t x
\]
is uniformly continuous. 
In other words, for all \(\epsilon > 0\) there exists a \(\delta > 0\) such that 
\[
\|S_t x - S_s x\| < \epsilon
\]
for all \(x \in K\) and \(t, s \in [0, T] \colon |t - s| < \delta\), i.e. the set \(\Lambda^\star (\infty)\) is equicontinuous. Thus, it is relatively compact by Fact \ref{fact2}. We conclude that also \(\Lambda^\star (m)\) is relatively compact.

Finally, it follows that \(\Lambda'(R, m)\) is relatively compact. Using \eqref{eq: b ineq}, \eqref{bound sigma} and \eqref{bound sigma2} together with Chebyshev's inequality, for all \(R > 0\) there exists a constant \(C'\) independent of \(n\) and \(R\) such that
\begin{align*}
P^n &\Big(X_0 \in K\ \text{ and } \int_0^T \|b^n (X_{s \wedge \tau_m})\|^p \dd s \leq R \ \text{ and }\ \int_0^T\|Y^{n, m, \theta}_s\|^p \dd s \leq R\Big) 
\\&\geq \eta^n (K)- P^n \Big( \1_K (X_0) \int_0^T \|b^n (X_{s \wedge \tau_m})\|^p \dd s > R\Big) - P^n \Big(\1_K(X_0) \int_0^T\|Y^{n, m, \theta}_s\|^p \dd s > R\Big)
\\&\geq 1 - \frac{\varepsilon}{2} - \frac{1}{R}\ \Big( E^{P^n} \Big[ \1_K(X_0)\int_0^T \|b^n (X_{s \wedge \tau_m})\|^p \dd s\Big] + E^{P^n} \Big[  \1_K(X_0)\int_0^T\|Y^{n, m, \theta}_s\|^p \dd s\Big]\Big)
\\&\geq 1 - \frac{\varepsilon}{2} -\frac{C'}{R}.
\end{align*}
Consequently, there exists an \(R_\varepsilon > 0\) such that 
\begin{align*}
\inf_{n \in \mathbb{N}} P^n\big((X_{t \wedge \tau_m})_{t \in [0, T]} \in \Lambda'(R_\varepsilon, m)\big) 
\geq 1 - \varepsilon.
\end{align*}
We conclude that the restriction of \((P^n \circ X_{\cdot \wedge \tau_m}^{-1})_{n \in \mathbb{N}}\) to \((\Omega^T, \mathcal{F}^T)\) is tight. Because \(T \in \mathbb{N}\) was arbitrary, this implies that \((P^n \circ X_{\cdot \wedge \tau_m}^{-1})_{n \in \mathbb{N}}\) is tight.
\qed
\appendix
\section{Some Facts on Cylindrical Martingale Problems}\label{app: 1}
To keep this article self-continuous we collect some results on cylindrical martingale problems, which are used in the proofs of Theorems \ref{theo1} and \ref{prop: loc}.
Let \(M^f\) be defined as in \eqref{f - K}, \(\tau_z\) be defined as in \eqref{eq: tau} and recall that
		\begin{align*}
		\mathcal{D} &= \big\{ g(\langle \cdot, y^*\rangle) \colon g \in C^2_c(\mathbb{R}), y^* \in D(A^*)\big\},\\
		\mathcal{B} &= \big\{ g (\langle \cdot, y^*\rangle )\colon g \in C^2_b(\mathbb{R}), y^* \in D(A^*) \big\}.
		\end{align*}
		\begin{proposition}\label{prop: A2}
		The following are equivalent:
		\begin{enumerate}
			\item[\textup{(i)}]
			\(P \in \mathcal{M}(A, b, a, K, \eta)\).
			\item[\textup{(ii)}]
			\(P \circ X^{-1}_0 = \eta\) and for all \(n \in \mathbb{N}\) and \(f \in \mathcal{D}\) the process \(M^f_{\cdot \wedge \tau_n}\) is a \(P\)-martingale. 
		\end{enumerate}
\end{proposition}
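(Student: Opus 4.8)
\emph{Direction (i) $\Rightarrow$ (ii).} Since $\mathcal D \subseteq \mathcal C$, the hypothesis $P \in \mathcal M(A, b, a, K, \eta)$ already gives $P \circ X_0^{-1} = \eta$ and that $M^f$ is a local $P$-martingale for every $f \in \mathcal D$; the point is to upgrade the stopped process $M^f_{\cdot \wedge \tau_n}$ to a genuine martingale. The plan is to show that $M^f_{\cdot \wedge \tau_n}$ is bounded on each compact time interval. For $f = g(\langle \cdot, y^* \rangle) \in \mathcal D$ with $g \in C^2_c(\mathbb R)$ the function $f$ is bounded, and $\mathcal K f$ is bounded on $\{\|x\| \le n\}$: the first-order term is controlled by the boundedness of $x \mapsto \langle x, A^* y^* \rangle$ on bounded sets together with assumption (ii) on $b$, the second-order term by the boundedness of $a$ on bounded sets, and the integral term is split at a jump size $\epsilon$ with $h(y) = y$ for $\|y\| \le \epsilon$, estimating the small jumps by a second-order Taylor bound against the $L^2$-condition in (iv) and the large jumps by the boundedness of $f$ and $h$ against the finiteness condition in (iv). Since a \cadlag path is bounded on compacts we have $\tau_n \nearrow \infty$, and a local martingale that is bounded on compact time intervals is a true martingale; this yields (ii).

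\emph{Direction (ii) $\Rightarrow$ (i): reductions.} Because $\tau_n \nearrow \infty$, it suffices to prove that for every $f \in \mathcal C$ and every $n$ the process $M^f_{\cdot \wedge \tau_n}$ is a local $P$-martingale; the boundedness estimate from the first paragraph applies verbatim to all $f \in \mathcal C$, so it is then automatically a true martingale, and together with $P \circ X_0^{-1} = \eta$ this gives $P \in \mathcal M(A, b, a, K, \eta)$. First I would enlarge the class of admissible test functions from $\mathcal D$ to $\mathcal B$: given $g \in C^2_b(\mathbb R)$, choose $g_k \in C^2_c(\mathbb R)$ with $g_k = g$ on $[-k, k]$ and $\sup_k (\|g_k\|_\infty + \|g_k'\|_\infty + \|g_k''\|_\infty) < \infty$; then $g_k, g_k', g_k''$ converge pointwise and boundedly to $g, g', g''$, dominated convergence (using the jump estimates in (iv)) gives $\mathcal K (g_k(\langle \cdot, y^* \rangle)) \to \mathcal K(g(\langle \cdot, y^* \rangle))$ pointwise and uniformly boundedly on $\{\|x\| \le n\}$, hence $M^{g_k(\langle \cdot, y^* \rangle)}_{\cdot \wedge \tau_n} \to M^{g(\langle \cdot, y^* \rangle)}_{\cdot \wedge \tau_n}$ in $L^1(P)$ and the martingale property passes to the limit.

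\emph{Direction (ii) $\Rightarrow$ (i): from $\mathcal B$ to $\mathcal C$.} Fix $y^*_1, \dots, y^*_m \in D(A^*)$, write $\pi(x) = (\langle x, y^*_1 \rangle, \dots, \langle x, y^*_m \rangle)$ and $Y = \pi(X)$. Using the $\mathcal B$-martingale property with $g$ ranging over suitable bump and quadratic functions and with $y^* = \sum_i c_i y^*_i \in D(A^*)$ for all $c \in \mathbb R^m$, I would identify (via Jacod's characterization of semimartingales through their characteristics, applied on the stopped space, cf.\ \cite{JS}) that $Y^{\tau_n}$ is an $\mathbb R^m$-valued semimartingale whose third characteristic is $\mathbf 1_{[0, \tau_n]}(s)\, ds\, (K(X_{s-}, \cdot) \circ \pi^{-1})(dz)$ --- the joint structure being pinned down from its one-dimensional images by a Cram\'er--Wold argument applied to the finite restrictions of these measures to $\{|z| > \epsilon\}$ --- whose second characteristic is $\int_0^{\cdot \wedge \tau_n} \langle a(X_{s-}) y^*_i, y^*_j \rangle\, ds$ (the off-diagonal entries by polarization, legitimate since $D(A^*)$ is a linear space), and whose first characteristic, relative to a fixed truncation function $\chi$ on $\mathbb R^m$, is built from the terms $\langle X_{s-}, A^* y^*_i \rangle + \langle b(X_{s-}), y^*_i \rangle$ together with the finite correction $\int (\chi_i(\pi(y)) - \langle h(y), y^*_i \rangle) K(X_{s-}, dy)$. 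Then for $f = g \circ \pi \in \mathcal C$ with $g \in C^2_c(\mathbb R^m)$ I would apply the finite-dimensional It\^o formula to $g(Y^{\tau_n})$ and compare with \eqref{K}; a short computation --- in which the above correction term is precisely what makes the two truncations $\chi$ and $y \mapsto (\langle h(y), y^*_i \rangle)_i$ cancel --- shows that the drift integrand equals $\mathcal K f(X_{s-})$, so that $M^f_{\cdot \wedge \tau_n}$ is the local-martingale part of $g(Y^{\tau_n})$ and in particular a local $P$-martingale, completing the reduction.

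The step I expect to be the main obstacle is the identification of the $\mathbb R^m$-valued characteristics of $Y^{\tau_n}$ from the family of one-dimensional martingale properties: one must handle the possibly non-special, infinite-activity jump part with care and keep precise track of the bookkeeping between the truncation function $h$ on $\mathbb B$ and the auxiliary truncation $\chi$ on $\mathbb R^m$, invoking the correspondence between such martingale families and semimartingale triplets only for the stopped, hence well-controlled, projections $Y^{\tau_n}$. The remaining ingredients --- the boundedness estimates, the $\mathcal D \to \mathcal B$ approximation and the final It\^o computation --- are routine; alternatively, this equivalence is essentially contained in \cite{criens18}, to which one may simply refer.
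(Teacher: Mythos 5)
The paper does not prove Proposition \ref{prop: A2} at all: it simply cites \cite[Lemma 4.7]{criens18}. So the proper comparison is between your sketch and that external reference. Taken on its own terms, the direction (i) \(\Rightarrow\) (ii) of your proposal is complete and correct: the estimate that \(\mathcal{K}f\) is bounded on \(\{\|x\|\le n\}\) (split the jump integral at the truncation scale, small jumps via the \(L^2\)-condition in (iv), large jumps via the finite-activity condition in (iv)) is exactly the content of Proposition \ref{prop: A1}(i), and the standard fact that a local martingale bounded on compacts is a martingale finishes it. Your reduction from \(\mathcal{D}\) to \(\mathcal{B}\) by cutting \(g\in C^2_b\) off at \([-k,k]\) and passing to the \(L^1\)-limit is likewise fine.

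The direction (ii) \(\Rightarrow\) (i) is, however, only a sketch where it matters most, and the gap there is genuine. You correctly identify the route --- read off the semimartingale characteristics of \(Y^{\tau_n}=(\langle X^{\tau_n}, y^*_1\rangle,\dots,\langle X^{\tau_n}, y^*_m\rangle)\) from the one-dimensional martingale properties, then recover \(M^f\) for general \(f\in\mathcal{C}\) by It\^o's formula --- but the crux, namely assembling the \(\mathbb{R}^m\)-valued triplet from the scalar martingale families, is not carried out. In particular: (1) to extract even the scalar triplet of \(c\cdot Y^{\tau_n}\) one needs Jacod--Shiryaev's identification theorem for characteristics, and the ``quadratic'' test functions you invoke are not in \(C^2_b(\mathbb{R})\), so a further truncation/localization layer is needed there; (2) the Cram\'er--Wold step that recovers the image of \(K(x,\cdot)\) under \(\pi\) from its one-dimensional projections requires care because \(\pi\) is neither injective nor norm-compatible, so \(K(x,\cdot)\circ\pi^{-1}\) may charge \(\{0\}\) and one must argue on annuli and track how the two truncations \(h\) (on \(\mathbb{B}\)) and \(\chi\) (on \(\mathbb{R}^m\)) interact in the drift correction; (3) even once the triplet of \(Y^{\tau_n}\) is identified, one must verify that the It\^o expansion of \(g(Y^{\tau_n})\) with \(g\in C^2_c(\mathbb{R}^m)\) actually reproduces \(\int_0^{\cdot\wedge\tau_n}\mathcal{K}f(X_{s-})\,ds\) with \(\mathcal{K}\) as in \eqref{K}, including the passage from the jump measure on \(\mathbb{R}^m\) back to the kernel \(K\) on \(\mathbb{B}\). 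None of these steps is implausible, and you rightly flag the characteristics-identification step as the main obstacle, but as written it is an outline of an argument rather than an argument. Since you yourself conclude by noting one may ``simply refer'' to \cite{criens18}, which is precisely what the paper does, the honest assessment is: your plan is the right one, the easy direction is done, and the hard direction is still a gap that would need the machinery of \cite[Section 4]{criens18} (or Jacod--Shiryaev III.2) spelled out to close.
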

\begin{proof}
	See \cite[Lemma 4.7]{criens18}.
\end{proof}
\begin{proposition}\label{prop: A1}
	\begin{enumerate}
		\item[\textup{(i)}] For all \(f \in \mathcal{B}, t \in \mathbb{R}_+, n \in \mathbb{N}\) and all bounded \(G \subset \mathbb{B}\) it holds that 
		\[
		\sup_{x \in G}  \big| \mathcal{K} f (x) \big| + \sup_{s \in [0, t]} \sup_{\omega \in \Omega} \big| M^f_{s \wedge \tau_n(\omega)}(\omega)\big| < \infty.
		\]
		\item[\textup{(ii)}]
		If \(P\in\mathcal{M}(A, b, a, K, \eta)\), 
		then for all \(n \in \mathbb{N}\) and \(f \in \mathcal{B}\) the process \(M^f_{\cdot \wedge \tau_n}\)  is a \(P\)-martingale. 
	\end{enumerate}
\end{proposition}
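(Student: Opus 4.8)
The plan is to prove (i) by a direct estimate of the three parts of $\mathcal{K}f$ for $f\in\mathcal{B}$, and then to deduce (ii) from Proposition \ref{prop: A2} by a truncation-and-approximation argument that reduces the test function to one lying in $\mathcal{D}$.

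For (i), fix $f=g(\langle\cdot,y^*\rangle)\in\mathcal{B}$ with $g\in C^2_b(\mathbb{R})$ and $y^*\in D(A^*)$, let $G\subset\mathbb{B}$ be bounded, and let $\epsilon>0$ be such that the fixed truncation function satisfies $h(y)=y$ for $\|y\|\le\epsilon$. Reading $\mathcal{K}f$ off from \eqref{K} (with $n=1$), I would bound the first-order term $(\langle x,A^*y^*\rangle+\langle b(x),y^*\rangle)g'(\langle x,y^*\rangle)$ on $G$ using that $\langle\cdot,A^*y^*\rangle$ is a bounded linear functional, that $\sup_{x\in G}|\langle b(x),y^*\rangle|<\infty$ by the assumption on $b$ in Section \ref{2.1} (applied to the constant sequence $y^*_n\equiv y^*$), and that $\|g'\|_\infty<\infty$; the second-order term $\tfrac12\langle a(x)y^*,y^*\rangle g''(\langle x,y^*\rangle)$ is controlled by the boundedness of $a$ on bounded sets and $\|g''\|_\infty<\infty$. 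For the integral term I would split $\mathbb{B}$ into $\{\|y\|\le\epsilon\}$ and $\{\|y\|>\epsilon\}$: on the former a second-order Taylor expansion of $g$ gives $|f(x+y)-f(x)-\langle y,y^*\rangle g'(\langle x,y^*\rangle)|\le\tfrac12\|g''\|_\infty\,|\langle y,y^*\rangle|^2$, whose $K(x,\cdot)$-integral is bounded uniformly over $x\in G$ by the integrability condition on $K$ (again with $y^*_n\equiv y^*$); on the latter the crude bound $2\|g\|_\infty+\|h\|_\infty\|y^*\|_o\|g'\|_\infty$ together with $\sup_{x\in G}K(x,\{\|z\|\ge\epsilon\})<\infty$ suffices. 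This yields $\sup_{x\in G}|\mathcal{K}f(x)|<\infty$. Applying this with $G=\{x:\|x\|\le n\}$, and noting that $\|X_{r-}(\omega)\|<n$ for Lebesgue-almost every $r\in[0,s\wedge\tau_n(\omega)]$, together with $|f|\le\|g\|_\infty$, gives $\sup_{s\in[0,t]}\sup_{\omega\in\Omega}|M^f_{s\wedge\tau_n(\omega)}(\omega)|\le 2\|g\|_\infty+t\sup_{\|x\|\le n}|\mathcal{K}f(x)|<\infty$.

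For (ii), fix $f=g(\langle\cdot,y^*\rangle)\in\mathcal{B}$ and approximate $g$ by $g_k(y)\triangleq g(y)\,\chi(y/k)$, where $\chi\in C^2_c(\mathbb{R})$ is a fixed cutoff with $\chi\equiv1$ on $[-1,1]$ and $0\le\chi\le1$. Then $g_k\in C^2_c(\mathbb{R})$, one has $g_k\to g$, $g_k'\to g'$ and $g_k''\to g''$ pointwise, and $\sup_k\big(\|g_k\|_\infty+\|g_k'\|_\infty+\|g_k''\|_\infty\big)<\infty$ because the correction terms carry factors $\chi'(\cdot/k)/k$ and $\chi''(\cdot/k)/k^2$ which are bounded uniformly in $k$. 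Put $f_k\triangleq g_k(\langle\cdot,y^*\rangle)\in\mathcal{D}$; since $P\in\mathcal{M}(A,b,a,K,\eta)$, Proposition \ref{prop: A2} shows that $M^{f_k}_{\cdot\wedge\tau_n}$ is a $P$-martingale for every $k$. I would then show $M^{f_k}_{s\wedge\tau_n}\to M^f_{s\wedge\tau_n}$ $P$-almost surely, with a bound uniform in $(k,s,\omega)$, by two nested dominated-convergence steps: first, for fixed $\omega$ and Lebesgue-a.e.\ $r\le s\wedge\tau_n(\omega)$, the integrand defining the jump part of $\mathcal{K}f_k(X_{r-}(\omega))$ is dominated — uniformly in $k$ and in the spatial variable over the bounded range of $X_{r-}(\omega)$ on $[0,s\wedge\tau_n(\omega))$ — by $\tfrac12(\sup_k\|g_k''\|_\infty)|\langle y,y^*\rangle|^2$ on $\{\|y\|\le\epsilon\}$ and by $2\sup_k\|g_k\|_\infty+(\sup_k\|g_k'\|_\infty)\|h\|_\infty\|y^*\|_o$ on $\{\|y\|>\epsilon\}$, both $K(X_{r-}(\omega),\cdot)$-integrable by the assumptions on $K$, so $\mathcal{K}f_k(X_{r-}(\omega))\to\mathcal{K}f(X_{r-}(\omega))$; second, this converging sequence of time-integrands is dominated by the constant $\sup_k\sup_{\|x\|\le n}|\mathcal{K}f_k(x)|$, which is finite by the estimates of (i) uniformly in $k$, since they depend on $g_k$ only through $\|g_k\|_\infty$, $\|g_k'\|_\infty$, $\|g_k''\|_\infty$. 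Hence, for $s<t$ and $A\in\mathscr{F}_s$, dominated convergence gives $E^P[M^f_{t\wedge\tau_n}\mathbf{1}_A]=\lim_k E^P[M^{f_k}_{t\wedge\tau_n}\mathbf{1}_A]=\lim_k E^P[M^{f_k}_{s\wedge\tau_n}\mathbf{1}_A]=E^P[M^f_{s\wedge\tau_n}\mathbf{1}_A]$; since $M^f_{\cdot\wedge\tau_n}$ is $\F$-adapted and bounded by (i), this proves it is a $P$-martingale.

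The hard part is the bookkeeping in (ii): one has to dominate the $K(x,\ud y)$-integrand simultaneously in the approximation index $k$ and in the spatial variable $x$ (ranging over the bounded set of values of $X_{r-}$ before $\tau_n$), and only afterwards dominate the resulting time integrand uniformly in $k$. This is precisely why it is worth keeping all the estimates in (i) quantitative in $\|g\|_\infty$, $\|g'\|_\infty$ and $\|g''\|_\infty$ alone.
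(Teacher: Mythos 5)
Your proof is correct. The paper itself only cites external references (\cite[Lemma 4.5, Corollary 4.6]{criens18}) for this proposition, so there is no in-paper argument to compare against in detail; yours supplies the expected self-contained proof. For (i), the direct estimate --- bounding drift and diffusion by the hypotheses of Section~\ref{2.1} on $b$ and $a$ (applied with the constant sequence $y^*_n\equiv y^*$), and splitting the jump integral at the truncation radius $\epsilon$ with a second-order Taylor bound on $\{\|y\|\le\epsilon\}$ and the crude bound $2\|g\|_\infty+\|h\|_\infty\|y^*\|_o\|g'\|_\infty$ on $\{\|y\|>\epsilon\}$, controlled by hypothesis (iv) on $K$ --- is exactly what is needed. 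Your quantitative observation that the resulting bound on $\sup_{x\in G}|\mathcal{K}f(x)|$ depends on $g$ only through $\|g\|_\infty$, $\|g'\|_\infty$, $\|g''\|_\infty$ is precisely what powers the approximation in (ii), where the cutoff family $g_k=g\cdot\chi(\cdot/k)$ has uniformly bounded $C^2$-norms, $f_k=g_k(\langle\cdot,y^*\rangle)\in\mathcal{D}$, and the two nested dominated-convergence passes transfer the stopped martingale identity from $M^{f_k}_{\cdot\wedge\tau_n}$ to $M^{f}_{\cdot\wedge\tau_n}$.

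One small caution about (ii): you invoke Proposition~\ref{prop: A2} to conclude that $M^{f_k}_{\cdot\wedge\tau_n}$ is a $P$-martingale. That proposition is itself cited from \cite[Lemma 4.7]{criens18}, which in the source presumably comes \emph{after} Corollary 4.6 (the very result you are proving), so the appeal risks a circular reference at the level of the cited paper. Fortunately it is not needed: since $f_k\in\mathcal{D}\subset\mathcal{C}$, Definition~\ref{def: MP} directly makes $M^{f_k}$ a local $P$-martingale, and because $M^{f_k}_{\cdot\wedge\tau_n}$ is uniformly bounded on each finite time interval by your part~(i), it is a true $P$-martingale (pass to the limit along a localizing sequence by dominated convergence). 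Substituting this short argument for the citation keeps the proof self-contained and immune to the ordering issue.
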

\begin{proof}
	For (i) see \cite[Lemma 4.5]{criens18} including its proof and for (ii) see \cite[Corollary 4.6]{criens18}.
\end{proof}
\begin{proposition}\label{prop: A3}
	Suppose that for all \(x \in \mathbb{B}\) the MP \((A, b, a, K, \varepsilon_x)\) has a unique solution \(P_x\). 
	\begin{enumerate}
		\item[\textup{(i)}] 
		The map \(x \mapsto P_x\) is Borel and for all Borel probability measures \(\eta\) on \(\mathbb{B}\) the MP \((A, b, a, K, \eta)\) has a unique solution given by \(\int P_x \eta(\dd x)\).
		\item[\textup{(ii)}] Let \(\rho\) be a stopping time and \(P\) be a probability measure on \((\Omega, \mathcal{F})\) such that \(P \circ X^{-1}_ 0 = \eta\) and for all \(f \in \mathcal{D}\) the process \(M^f_{\cdot \wedge \rho}\) is a local \(P\)-martingale. Then, \(P = \int P_x \eta(\dd x)\) on \(\mathcal{F}_\rho\).
	\end{enumerate}
\end{proposition}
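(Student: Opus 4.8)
The plan is to follow the classical Stroock--Varadhan scheme: deduce well-posedness for an arbitrary initial law from well-posedness for degenerate ones by a conditioning argument, and prove the localized statement~(ii) by concatenation. Throughout I will use that, by Proposition~\ref{prop: A2}, a probability measure $Q$ on $(\Omega,\mathscr F)$ belongs to $\mathcal M(A,b,a,K,\eta)$ if and only if $Q\circ X_0^{-1}=\eta$ and $M^f_{\cdot\wedge\tau_n}$ is a $Q$-martingale for all $n\in\mathbb N$ and $f\in\mathcal D$, and that by Proposition~\ref{prop: A1} each such $M^f_{\cdot\wedge\tau_n}$ is bounded and is a martingale under every solution.

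\emph{Measurability of $x\mapsto P_x$.} Since each $M^f_{\cdot\wedge\tau_n}$ is bounded (Proposition~\ref{prop: A1}(i)), the requirement ``$M^f_{\cdot\wedge\tau_n}$ is a $Q$-martingale'' is, for fixed $f$ and $n$, a countable intersection over rational times $s<t$ and a countable family of bounded test random variables of conditions of the form $E^Q[(M^f_{t\wedge\tau_n}-M^f_{s\wedge\tau_n})Z]=0$, each of which is Borel in $Q\in\mathcal P(\Omega)$. Using that $D(A^*)$ is separable for the graph norm and $C^2_c(\mathbb R)$ is separable in the natural $C^2$-topology, one fixes a countable subfamily $\mathcal D_0\subset\mathcal D$ such that every $f\in\mathcal D$ arises as a pointwise limit of elements of $\mathcal D_0$ along which both $f^{(k)}$ and $\mathcal K f^{(k)}$ converge pointwise with the uniform local bounds of Proposition~\ref{prop: A1}(i); by dominated convergence, making all $M^{f}_{\cdot\wedge\tau_n}$, $f\in\mathcal D_0$, $Q$-martingales forces the same for all $f\in\mathcal D$. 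Hence $\{Q:M^f_{\cdot\wedge\tau_n}\text{ is a }Q\text{-martingale for all }n,f\in\mathcal D\}$ is Borel in $\mathcal P(\Omega)$, so the graph $\{(x,Q):Q=P_x\}=\{(x,Q): \text{this holds for }Q\}\cap\{(x,Q):Q\circ X_0^{-1}=\varepsilon_x\}$ is Borel in $\mathbb B\times\mathcal P(\Omega)$; being the graph of a map into the standard Borel space $\mathcal P(\Omega)$, the classical measurable-graph theorem yields that $x\mapsto P_x$ is Borel.

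\emph{Existence and uniqueness for general $\eta$.} Set $P^\eta\triangleq\int P_x\,\eta(\dd x)$, a well-defined probability measure on $(\Omega,\mathscr F)$ because $x\mapsto P_x$ is Borel. Its initial law is $\int\varepsilon_x\,\eta(\dd x)=\eta$, and for $f\in\mathcal D$ and $n\in\mathbb N$ the boundedness and martingale property of $M^f_{\cdot\wedge\tau_n}$ under each $P_x$ give, by Fubini, that $M^f_{\cdot\wedge\tau_n}$ is a $P^\eta$-martingale; thus $P^\eta\in\mathcal M(A,b,a,K,\eta)$. For uniqueness, let $P\in\mathcal M(A,b,a,K,\eta)$ and take a regular conditional probability distribution $(\pi_x)_{x\in\mathbb B}$ of $P$ given $\mathscr F_0=\sigma(X_0)$, which exists since $\Omega$ is Polish. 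Then $\pi_x\circ X_0^{-1}=\varepsilon_x$ for $\eta$-a.e.\ $x$, and since each $M^f_{\cdot\wedge\tau_n}$ is a bounded right-continuous $P$-martingale with $\mathscr F_0$-measurable initial value $0$, a standard conditioning lemma (see, e.g., \cite{SV}) shows that $M^f_{\cdot\wedge\tau_n}$ is a $\pi_x$-martingale for $\eta$-a.e.\ $x$; hence $\pi_x\in\mathcal M(A,b,a,K,\varepsilon_x)$ for $\eta$-a.e.\ $x$. Well-posedness forces $\pi_x=P_x$ for $\eta$-a.e.\ $x$, so $P=\int\pi_x\,\eta(\dd x)=\int P_x\,\eta(\dd x)$, which proves~(i).

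\emph{The localized statement~(ii).} Given $\rho$ and $P$ as in the statement, I would build $\widehat P$ on $(\Omega,\mathscr F)$ agreeing with $P$ on $\mathscr F_\rho$ and continuing, conditionally on $\mathscr F_\rho$, according to $P_{X_\rho}$ after time $\rho$ --- a measurable concatenation, well-defined because $x\mapsto P_x$ is Borel; by construction $\widehat P=P$ on $\mathscr F_\rho\supseteq\mathscr F_0$, so in particular $\widehat P\circ X_0^{-1}=\eta$. One then checks $\widehat P\in\mathcal M(A,b,a,K,\eta)$ by the standard pasting argument for martingale problems (as in \cite{SV}): for $f\in\mathcal D$ and $n\in\mathbb N$ one splits the bounded martingale candidate $M^f_{\cdot\wedge\tau_n}$ at $\rho\wedge\tau_n$, invoking on $[0,\rho]$ that $M^f_{\cdot\wedge\rho\wedge\tau_n}$ is a bounded local $P$-martingale --- hence a martingale --- and on the continuation past $\rho$ the martingale property of $M^f_{\cdot\wedge\tau_n}$ under $P_{X_\rho}$, combining the two through the (strong) Markov structure of the concatenation. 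By part~(i), $\widehat P=\int P_x\,\eta(\dd x)$, and since $\widehat P=P$ on $\mathscr F_\rho$ we conclude $P=\int P_x\,\eta(\dd x)$ on $\mathscr F_\rho$. I expect the main obstacles to be the measurability/separability bookkeeping in the first step and, above all, the verification that the concatenated measure $\widehat P$ genuinely solves the MP --- that is, the careful interplay of the stopping times $\rho$ and $\tau_n$ and the use of $\{P_x\}$ as a Markov family; both are standard but technical and are available in the cited works.
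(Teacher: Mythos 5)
The paper's proof is simply a pointer to \cite[Theorem 3.2, Proposition 4.13]{criens18}, where the classical Stroock--Varadhan scheme (measurability via the graph theorem, disintegration by a regular conditional probability given $\mathscr{F}_0$ for uniqueness under a general initial law, and the stopping-time concatenation for the localized statement) is carried out for this class of cylindrical martingale problems. Your proposal reconstructs exactly that scheme, correctly leaning on Propositions~\ref{prop: A1} and~\ref{prop: A2} for boundedness, martingale property and reduction to the test class $\mathcal{D}$, so it is correct and takes essentially the same route as the cited proof.
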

\begin{proof} 
	For (i) see \cite[Theorem 3.2]{criens18} and for (ii) see \cite[Proposition 4.13]{criens18}.
\end{proof}

\bibliographystyle{acm}


\end{document}